\begin{document}
\newcommand\RR{\mathbb{R}}
\newcommand{\la}{\lambda}
\def\RN {\mathbb{R}^n}
\newcommand{\norm}[1]{\left\Vert#1\right\Vert}
\newcommand{\abs}[1]{\left\vert#1\right\vert}
\newcommand{\set}[1]{\left\{#1\right\}}
\newcommand{\Real}{\mathbb{R}}
\newcommand{\supp}{\operatorname{supp}}
\newcommand{\card}{\operatorname{card}}
\renewcommand{\L}{\mathcal{L}}
\renewcommand{\P}{\mathcal{P}}
\newcommand{\T}{\mathcal{T}}
\newcommand{\A}{\mathbb{A}}
\newcommand{\K}{\mathcal{K}}
\renewcommand{\S}{\mathcal{S}}
\newcommand{\blue}[1]{\textcolor{blue}{#1}}
\newcommand{\red}[1]{\textcolor{red}{#1}}
\newcommand{\Id}{\operatorname{I}}
\newcommand\wrt{\,{\rm d}}
\def\SL{\sqrt {L}}

\newcommand{\mar}[1]{{\marginpar{\sffamily{\scriptsize
        #1}}}}
\newcommand{\li}[1]{{\mar{LY:#1}}}
\newcommand{\el}[1]{{\mar{EM:#1}}}
\newcommand{\as}[1]{{\mar{AS:#1}}}

\newcommand\CC{\mathbb{C}}
\newcommand\NN{\mathbb{N}}
\newcommand\ZZ{\mathbb{Z}}

\renewcommand\Re{\operatorname{Re}}
\renewcommand\Im{\operatorname{Im}}

\newcommand{\mc}{\mathcal}
\newcommand\D{\mathcal{D}}

\newcommand{\al}{\alpha}
\newcommand{\nf}{\infty}

\newcommand{\comment}[1]{\vskip.3cm
	\fbox{%
		\color{red}
		\parbox{0.93\linewidth}{\footnotesize #1}}
	\vskip.3cm}

\newcommand{\disappear}[1]

\newtheorem{thm}{Theorem}[section]
\newtheorem{prop}[thm]{Proposition}
\newtheorem{cor}[thm]{Corollary}
\newtheorem{lem}[thm]{Lemma}
\newtheorem{lemma}[thm]{Lemma}
\newtheorem{exams}[thm]{Examples}
\theoremstyle{definition}
\newtheorem{defn}[thm]{Definition}
\newtheorem{rem}[thm]{Remark}

\numberwithin{equation}{section}
\newcommand{\chg}[1]{{\color{red}{#1}}}
\newcommand{\note}[1]{{\color{green}{#1}}}
\newcommand{\later}[1]{{\color{blue}{#1}}}
\newcommand{\bchi}{\mathlarger{\chi}}

\title[ ]
{ Bounds on the maximal Bochner-Riesz means  \\[2pt]  for  elliptic operators}

\author{Peng Chen}
\author{Sanghyuk Lee}
\author{Adam Sikora}
\author{Lixin Yan}
\address{Peng Chen, Department of Mathematics, Sun Yat-sen
University, Guangzhou, 510275, P.R. China}
\email{chenpeng3@mail.sysu.edu.cn}
\address{Sanghyuek Lee,  School of Mathematical Sciences, Seoul national University, Seoul 151-742, Repulic of Korea}
\email{shklee@snu.ac.kr}
\address {Adam Sikora, Department of Mathematics, Macquarie University, NSW 2109, Australia}
\email{sikora@maths.mq.edu.au}
\address{Lixin Yan, Department of Mathematics, Sun Yat-sen   University,
Guangzhou, 510275, P.R. China}
\email{mcsylx@mail.sysu.edu.cn}
\date{\today}
\subjclass[2000]{42B15, 42B25,   47F05.}
\keywords{Maximal Bochner-Riesz means, non-negative self-adjoint operators,    finite speed
propagation property, elliptic type estimates, restriction type conditions.}

\begin{abstract}
 We investigate $L^p$  boundedness of  the maximal Bochner-Riesz means  for self-adjoint operators of elliptic type.
 Assuming the finite speed  of  propagation for the associated wave operator,  from the restriction type estimates
 we establish the sharp $L^p$ boundedness of  the maximal Bochner-Riesz means  for the elliptic operators.  As applications,
 we obtain the sharp $L^p$ maximal bounds  for  the Schr\"odinger operators  on asymptotically conic manifolds,  the harmonic oscillator
 and its perturbations or  elliptic operators on compact manifolds.
 \end{abstract}

\maketitle



\section{Introduction } \setcounter{equation}{0}
Convergence of the Bochner-Riesz means and boundedness of the  associated maximal operators  on
Lebesgue $L^p$ spaces are among  the most classical problems in harmonic analysis.  The study on the Bochner-Riesz means
can be seen as an attempt to justify the Fourier inversion. We begin with recalling the Bochner-Riesz means on
$\RN$ which  are  defined by,  for $\alpha\ge 0$ and $R>0$,
\begin{eqnarray}\label{ee0}
\widehat{S^{\alpha}_Rf}(\xi)
=\left(1-{|\xi|^2\over R^2}\right)_+^{\alpha} \widehat{f}(\xi),  \quad \forall{\xi \in \RN}.
\end{eqnarray}
Here $(x)_+=\max\{0,x\}$ for $x\in \mathbb R$ and $\widehat{f}\,$ denotes the Fourier  transform
of $f$.
The associated  maximal function which is called `maximal Bochner-Riesz  operator' is given by
\begin{eqnarray}\label{ee00}
 S_{\ast}^{\alpha} f(x)=  \sup_{R>0} |S^{\alpha}_Rf(x) |.
\end{eqnarray}
 The problem of characterizing  the  optimal range of $\alpha$ for which $S^\alpha$ (and $S^\alpha_\ast$) is  bounded  on $L^p(\RN)$
 is known as  the  Bochner-Riesz  (and maximal Bochner-Riesz) conjecture.
It  has been  conjectured that, for $1\le p \le \infty $ and $p\neq 2$, $S_{R}^\alpha $ is bounded on $L^p(\mathbb R^n)$ if and only if
\begin{equation}
\label{sharp}
\alpha> \alpha(p)=\max \left\{ n \left|{1\over p}-{1\over 2} \right|-{1\over 2}, 0 \right\}.
\end{equation}
We refer the reader to  \cite{DY},  Stein's monograph  \cite[Chapter IX]{St2}  and Tao \cite{Ta3} for historical background and more on
the Bochner-Riesz conjecture.
It was shown by Herz that for a given $p$  the above condition on  $\alpha$  is
necessary, see \cite{He}.  Carleson and Sj\"olin \cite{CS} proved the conjecture when $n=2$.  Afterward substantial progress has bee
made \cite{tvv, Lee, BoGu, ghi}, but the conjecture still remains open for $n\ge 3$.

 Concerning the $L^p$ boundedness of $S^\alpha_\ast$,
for $p\geq 2$ it is natural to expect that
$S_{\ast}^\alpha$ is
bounded on $L^p$ on the same range where $S_{R}^\alpha$ is bounded, see e.g. \cite{Lee, LRS1}.  This was shown to be true by   Carbery \cite{Ca} when $n=2$.
In dimensions greater
than two partial results are known.    Christ \cite{C1} showed that    $S_{\ast}^{\alpha} $ is bounded on $L^p$ if
$p\geq {2(n+1)/(n-1)}$ and $\alpha> \alpha(p)$,
and the range of $p$ was extended by the second named author  to the range  $p>{2(n+2)/n}$ in \cite{Lee} and see \cite{Lee2} for  the most recent progress.
In this paper  we focus on the case $p\ge 2$ but it should be mentioned that, for $p<2$,  the range of $\alpha$ where
$S_{\ast}^{\alpha}$ is bounded on $L^p$ is different
from that of   $S_R^{\alpha}$.  Tao \cite{Ta1} showed that
the additional restriction $\alpha\geq (2n-1)/(2p) - n/2$ is necessary. Besides, when $n=2$ he obtained an improved estimate over
 the classical result \cite{Ta2}.


\subsubsection*{Bochner-Riesz means for elliptic operators } Since the Bochner-Riesz means are radial Fourier multipliers,
they can be defined   in terms of the spectral resolution of
the standard Laplace operator
$\Delta=\sum_{i=1}^n\partial_{x_i}^2$. This point of view naturally allows us to extend the  Bochner-Riesz means
and the  maximal   Bochner-Riesz operator to arbitrary positive
self-adjoint operator. For this purpose suppose  that $(X,d,\mu) $ is a metric measure space
 with a distance $d$ and a measure  $\mu$,  and that $L$
is a non-negative self-adjoint operator acting on the
space $L^2(X)$.   Such an operator admits a spectral
resolution
\begin{eqnarray*}
L=\int_0^{\infty} \lambda dE_L(\lambda).
\end{eqnarray*}
Now, the Bochner-Riesz mean of order $\alpha\geq 0$   can be defined by
 \begin{equation}\label{e1.22}
 S^{\alpha}_R(L) f(x)= \left( \int_0^{R^2} \left(1-\frac{\lambda}{R^2}\right)^{\alpha}dE_L(\lambda) f \right)(x), \ \ \ \ x\in X
   \end{equation}
  and the associated   maximal operator is given by
  \begin{eqnarray}\label{e1.2}
  S_{\ast}^{\alpha}(L) f(x)= \sup_{R>0} |S^{\alpha}_R(L) f(x) |.
  \end{eqnarray}
If we set $L=-\Delta$, the operators $S^{\alpha}_R(-\Delta)$ and
$S_{\ast}^{\alpha}(-\Delta)$ coincide with the classical  $S_R$ and $S_{\ast}^{\alpha}$, respectively. In this paper we aim to
investigate $L^p$-boundedness of the maximal Bochner-Riesz given by a  certain class of self-adjoint operators.

\subsubsection*{Restriction estimates} The celebrated Stein-Tomas restriction estimate to the sphere played an important role in
the development  of Bochner-Riesz problem (see \cite{St2}). This estimate can be reformulated in terms of spectral decomposition
of the standard Laplace operator. Indeed,  for $\lambda>0$ let
$R_\lambda$  be the restriction operator given by    $R_\lambda(f)(\omega) =\hat{f}({\lambda} \omega),$
where $\omega\in   {\bf S}^{n-1}$ (the unit sphere).
Then
$$
d E_{\sqrt{-\Delta}}(\lambda) =(2\pi)^{-n} \lambda^{n-1}R_\lambda^*R_\lambda.
$$
Thus, putting $L=-\Delta$, the  {  Stein-Tomas theorem} (\cite[p. 386]{St2}) 
is equivalent to the estimate
\begin{equation}
\label{e1.4}
\|dE_{\sqrt L}(\lambda)\|_{p\to p'}\le C\ \lambda^{{n}(\frac{1}{p}-\frac{1}{p'})-1}, \ \ \lambda>0\
 \end{equation}
 for $1\le p\le 2(n+1)/(n+3)$. In \cite{GHS} Guillarmou,
  Hassell and the third named author showed that the estimate \eqref{e1.4}
 remains valid for the Schr\"odinger type operators on asymptotically conic manifolds. It is easy to check that \eqref{e1.4} is equivalent to the
 following estimate:
 \begin{equation*}\label{rp}
 \tag{${\rm R_p}$}
 \big\|F(\!\SL\,\,) \big\|_{p\to 2} \leq C R^{n\left({\frac 1p}-{\frac 12}\right)} \big\| \delta_R F  \big\|_{2}
 \end{equation*}
  for any $R>0$ and all Borel functions $F$ supported   in $ [0,R],$
where  the dilation $\delta_RF$ is defined by
 {$\delta_RF(x)=F(Rx)$} (see  \cite[Proposition I.4]{COSY} ).

Observation regarding relation  between restriction estimate and the sharp $L^p$-boundedness (the boundedness of $S^\alpha_R$ in $L^p$
for $\alpha$ satisfying \eqref{sharp}) of the Bochner-Riesz means goes back to as far as  Stein \cite{fe1} (and also see \cite{St2}).
The argument in \cite{fe1} and the Stein-Tomas restriction estimate give the sharp $L^p$ estimates for
$S_R^\alpha(-\Delta)$ for $p$ satisfying  $\max(p,p')\ge 2(n+1)/(n-1)$.  Likewise, it is natural to suspect if there is a similar
connection between \eqref{rp}  and  the sharp $L^p$ bound for  $S^{\alpha}_R(L)$ when $L$ is a general elliptic operator.
This question was explored in \cite{COSY}. In fact, it was  shown in  \cite[Corollary I.6]{COSY}  that if the operator
$L$ satisfies the finite speed of propagation property
and the condition \eqref{rp}, then the Bochner-Riesz means are bounded on $L^p(X)$ spaces for $p$ on the range where \eqref{rp}
holds  if   ${\alpha}>\max(0, n|1/p-1/2|-1/2)$.


Our first result is the maximal generalization of the aforementioned result in \cite{COSY}.

\noindent{\bf Theorem A.}  {\it  Let  { $B(x,r)=\{y\in X: d(x,y)<r\}$} and {$V(x,r)= \mu\big( B(x,r)\big)$}. Suppose  that
\begin{equation}
\label{eq1.1}
C^{-1}r^n \leq V(x, r)\leq C r^n 
\end{equation}
holds for all $x\in X$, and $L$  satisfies the finite speed of propagation property (see, Definition \ref{FSP})
and the condition  {\color{red}${\rm ( R_{p_0}})$} for some  $1\leq p_0 <2$.
    Then the operator $S_{\ast}^{\alpha}(L)$ is bounded on $L^p(X)$ whenever
 \begin{eqnarray}\label{eww}
  2\leq p< p_0',  \ \ \ {\rm and }\ \ \  \alpha> \alpha(p_0)=
  \max\left\{ n\left({1\over p_0}-{\frac 1 2}\right)- {\frac 1 2}, \, 0 \right\}.
  \end{eqnarray}
 As a consequence, if  $f\in L^p(X)$, then  for $p$ and $\alpha$ satisfying \eqref{eww},
  $$
  \lim\limits_{R\to \infty}S_{R}^{\alpha}(L)f(x)=f(x), \ \ \ a.e.
  $$
}

Later, we will see that the condition \eqref{eq1.1} can be replaced
by the doubling condition  \eqref{eq2.2}.

\subsubsection*{Cluster estimates.} It is not difficult to see that  the  condition   \eqref{rp} implies that
  the set of point spectrum of $L$ is empty.   Indeed, one has, for   $0\leq a< R$, 
$
\|1\!\!1_{\{a \}  }(\sqrt{L}\,) \|_{p\to 2}
\leq C R^{n({1\over p}-{1\over 2})} \|1\!\!1_{\{a \} }(R\cdot)\|_{2} =0,
$
and thus $1\!\!1_{\{a\} }(\sqrt{L}\,)=0$. Since  $\sigma(L)\subseteq [0, \infty)$, it is clear that the point spectrum of $L$
is empty.
In particular, \eqref{rp}  does  not hold  for elliptic
operators on compact manifolds or for the harmonic oscillator.
 In order to treat these cases as well we need to modify the estimate \eqref{rp} as follows:
 For a fixed natural number $\kappa$ and for all $N\in \NN$ and all  even
  Borel functions $F$  supported in $[-N, N]$,
$$
\big\|F(\!\SL\,\,) \big\|_{p\to 2} \leq
 CN^{n({\frac 1p}-{\frac 12})}\| \delta_N F  \|_{N^\kappa,\, 2},
\leqno{\rm (SC^{\kappa}_{p})}
$$
 where
 \begin{equation}
\label{norm-def}
\|F\|_{N,2}:=\left({1\over 2N}\sum_{\ell=1-N}^{N} \sup_{\lambda\in
 [{\ell-1\over N}, {\ell\over N})} |F(\lambda)|^2\right)^{1/2}
\end{equation}
for $F$ with $\supp F \subset [-1, 1]$. The  norm $\|F\|_{N,2}$ already appeared in \cite{CowS, DOS} in the study of
spectral multipliers, see also \cite{COSY}.

As shown in  \cite[Proposition I.14]{COSY},
the condition  ${\rm (SC^{1}_{p})}$ is equivalent to the following $(p, p')$ spectral cluster estimate
${\rm (S_p)}$
introduced by Sogge (see \cite{Sog1, Sog3, Sog4}): For all $\lambda \geq 0,$
 $$
\big\|E_{\sqrt{L}}[\lambda, \lambda+1)\big\|_{p\to p'} \leq
C (1+\lambda)^{n({1\over p}-{1\over p'})-1}. \leqno{\rm (S_p)}
$$
In this context we shall prove the following result.

\noindent{\bf Theorem B.} {\it
Suppose  that the condition
 \begin{equation}\label{eq2a}
 \mu(X)<\infty \quad \mbox{and} \quad C^{-1} \min(r^n, 1) \le V(x, r)\leq C \min(r^n, 1)
 \end{equation}
is valid   for all $x\in X$ and $r>0$. And
 suppose that  the operator $L$ satisfies  the finite speed of propagation property  (see, Definition \ref{FSP})
and  the condition  ${\rm (SC^{1}_{p_0})}$.
Then the operator $S_{\ast}^{\alpha}(L)$ is bounded on $L^p(X)$ whenever \eqref{eww} is satisfied.
%
  As a consequence, if $f\in L^p(X)$, then  for $p$ and $\alpha$  satisfying \eqref{eww}
  $$
  \lim\limits_{R\to \infty}S_{R}^{\alpha}(L)f(x)=f(x), \ \ \ a.e.
  $$
}

 We now consider the case $\mu(X)=\infty$ with the property \eqref{eq1.1}. Motivated by the harmonic oscillator $L=-\Delta+|x|^2$
we obtain the following variant of {  Theorem B}.

\noindent{\bf Theorem C.} {\it
	Suppose  that condition \eqref{eq1.1} holds, and  the operator $L$ satisfies  the finite speed of propagation property
	and the condition  ${\rm (SC^{\kappa}_{p_0})}$ for  some
	   $1\leq p_0<2$ and  some positive integer
	$\kappa$. In addition,   we assume that there exists $\nu \ge 0$  such that
	\begin{eqnarray}\label{e1.500}
	\|(1+L)^{-\gamma/2}\|_{{p'_0}\to 2}\leq C, \  \  \gamma=n(\kappa-1)(1/p_0-1/2)+\kappa\nu.
	\end{eqnarray} 
	Then the operator $S_{\ast}^{\alpha}(L)$ is bounded on $L^p(X)$ whenever 
	\begin{eqnarray}\label{ewww2} 	2\leq p< p_0',  \ \ \ {\rm and }\ \ \  \alpha> \nu+ 	
	\max\left\{ n\left({\frac {1}{p_0}}-{\frac 12}\right)- {\frac 12}, \, 0 \right\}. 	\end{eqnarray}
	As a consequence, if $f\in L^p(X)$, then for  $p$ and $\alpha$ satisfying \eqref{ewww2},
	$$
	\lim\limits_{R\to \infty}S_{R}^{\alpha}(L)f(x)=f(x), \ \ \ a.e.
	$$
	}


  We shall show  that in dimension $n\geq 2$,
 \eqref{e1.500} holds with $\kappa=2$ and each $\nu>0$ for  the harmonic oscillator  $L=-\Delta +|x|^2$
and $L=-\Delta +V(x)$  with the potential $V$ satisfying \eqref{eq111.01} below. The restriction estimates ${\rm (SC^{2}_{p})}$
for those operator were obtained by  Kardzhov \cite{Kar}, Thangavelu \cite{Th4},  Koch and Tataru \cite{KoT}.  Combining these
estimates with Theorem C, we are able to obtain the sharp $L^p$ bounds for the associated maximal Bochner-Riesz  operators.  See Section 6.3.

In order to prove  Theorems A, B, and C, we make use of the square function which has been utilized to control
the maximal Bochner-Riesz operators (see  \cite{St, Ca, C1, Lee}). The square function estimates in Proposition \ref{prop4.1} and
Proposition \ref{prop5.1} also have other applications. In particular, those estimates can be used to deduce smoothing properties
for the Schr\"odinger and the wave equations and also spectral multiplier theorems of H\"ormander-Mihlin type, see \cite{LRS1, LRS2}
for such implications  when $L=-\Delta$.  However,  unlike the classical case $L=-\Delta$,  for the general elliptic operators we
 don't have the  typical properties of Fourier multipliers such as translation and scaling invariances.  Also, the associated  heat
  kernels are not necessarily smooth. This requires to refine
  the classical argument in various aspects. In particular we will use
   a new variant of Calder\'on--Zygmund technique
 for the square functions, see for example  \cite{Au, AM}.

 Roughly speaking, we show that  the estimate \eqref{rp} (equivalently \eqref{e1.4}) or its variant  implies the $L^p$ boundedness of
 the maximal Bochner-Riesz operators assuming the finite speed of propagation   property. Main advantage of this  approach is that we can
 handle large class of elliptic operators. Since the restriction type estimates are better understood now,  it is possible  to extend
 part of this argument to general setting of the homogeneous spaces, and also to include operators such as harmonic oscillator or
 operators acting on compact manifolds.

 The   Bochner-Riesz means operator   for various classes of self-adjoint operators have been extensively studied  (see
\cite{COSY, DOS, Heb,  Ho2, Kar,  Ma,  Se, SYY2, Sog1, Sog4, Th1, Th3, Th4}
and references therein). However,
as far as the authors are aware, there is no result that proves,   on the range of $p$ up to that of restriction type estimate,
the sharp $L^p$ boundedness of the maximal
Bochner-Riesz operator other than  the standard Laplacian  and Fourier multipliers (see \cite{BoGu,
Ca,  CS, C1, F2,  Lee, Lee2, LRS1, LRS2, Se2, SW}).

\emph{Organization of the paper.} In Section 2 we provide some prerequisites, which we need later, mostly on the restriction type
estimate and the finite speed  of  propagation property.  In Section 3 we consider the maximal bounds under less restrictive assumptions
 which includes more general elliptic operators though they don't give the sharp bounds. The proof of Theorem~A will be given in Section 4.
   The proof of Theorems~B and ~C will be given in Section 5. In Section \ref{sec6} we discuss some examples of applications of Theorems A, B, C which
include the harmonic oscillator and its perturbation, Schr\"odinger operators  on asymptotically conic manifolds,
elliptic operators on compact manifolds and the radial part of the standard Laplace operator.


{\bf List of notation.}
\\
$ \bullet$ $(X,d,\mu) $ denotes  a metric measure space
 with a distance $d$ and a measure  $\mu$.
 \\
$ \bullet$ $L$
is a non-negative self-adjoint operator acting on the
space $L^2(X).$
\\
$ \bullet$ For $x\in X$ and $r>0$, $B(x,r)=\{y\in X: d(x,y)<r\}$  and {$V(x,r)= \mu\big( B(x,r)\big).$
\\
$ \bullet$  $\delta_RF$ is defined by
$\delta_RF(x)=F(Rx)$ for $R>0$ and Borel function $F$ supported  on $ [0,R].$
 \\
 $\bullet$
 $[t]$ denotes the integer part of $t$
for any positive real number $t$.
\\
 $\bullet$ $\NN$ is the set of positive integers.
\\
 $\bullet$ For $p\in [1,\infty]$, $p'={p}/{(p-1)}$.
 \\
 $\bullet$  For $1\le p\le\infty$ and  $f\in L^p(X,{\rm d}\mu)$,  $\|f\|_p=\|f\|_{L^p(X,{\rm d}\mu)}.$
 \\
 $\bullet$
  $\langle \cdot,\cdot \rangle$ denotes
the scalar product of $L^2(X, {\rm d}\mu)$.
\\
 $\bullet$ For $1\le p, \, q\le+\infty$,  $\|T\|_{p\to q} $ denotes the  operator norm of $T$
 from $ L^p(X, {\rm d}\mu)$ to $L^q(X, {\rm d}\mu)$.
 \\
 $\bullet$ If $T$ is  given by $Tf(x)=\int K(x,y) f(y) d\mu(y)$,  we denote by  $K_T$ the kernel of $T$.
 \\
 $\bullet$
 Given a  subset $E\subseteq X$, we  denote by  $\bchi_E$   the characteristic
function of   $E$.
\\
 $\bullet$
 For  $1\leq r <\infty$, $\mathfrak M_r$ denote the uncentered  $r$-th maximal operator over balls in $X$, that is
 \begin{equation*}
\mathfrak  M_rf(x)=\sup_{x\in B} \left({1\over \mu(B) }\int_{B}
|f(y)|^rd\mu(y)\right)^{1/r}.
\end{equation*}
 For simplicity we denote by $\mathfrak M$ the Hardy-Littlewood maximal function $\mathfrak M_1$.

\section{Preliminaries}\label{sec2}
\setcounter{equation}{0}


We say that $(X, d, \mu)$ satisfies
 the doubling property (see Chapter 3, \cite{CW})
if there  exists a constant $C>0$ such that
\begin{eqnarray}
V(x,2r)\leq C V(x, r)\quad \forall\,r>0,\,x\in X.  \label{eq2.1}
\end{eqnarray}
If this is the case, there exist  $C, n$ such that for $\lambda\geq 1$ and $x\in X$
\begin{equation}
V(x, \lambda r)\leq C\lambda^n V(x,r). \label{eq2.2}
\end{equation}
In the Euclidean space with Lebesgue measure, $n$ corresponds to the dimension of the space.
Observe that if $X$ satisfies (\ref{eq2.1}) and has finite measure then it has finite diameter.
Therefore, if $\mu(X)$ is finite, then we may assume that $X=B(x_0, 1)$ for some $x_0\in X$.


\noindent{\bf 2.1.
Finite speed of propagation  property and elliptic type estimates.} \
To formulate the finite speed of propagation  property for the wave equation corresponding to an operator $L$, we
set
\begin{equation*}
\D_r=\{ (x,\, y)\in X\times X: {d}(x,\, y) \le r \}.
\end{equation*}
Given an  operator $T$ from $L^p(X)$ to $L^q(X)$,
we write
\begin{equation}\label{eq2.4}
\supp  K_{T} \subseteq \D_r
\end{equation}
 if $\langle T f_1, f_2 \rangle = 0$ whenever $f_1 \in L^p(B(x_1,r_1))$, $f_1 \in L^{q'}( B(x_2,r_2))$ with $r_1+r_2+r < {d}(x_1, x_2)$.
 Note that  if $T$ is an
integral operator with a  {kernel $K_T$}, then (\ref{eq2.4}) coincides
with the  standard meaning of $\supp  K_{T}  \subseteq \D_r$,
 that is $K_T(x, \, y)=0$ for all $(x, \, y) \notin \D_r$.

\begin{defn}\label{FSP}
Given  a non-negative self-adjoint operator $L$
on $L^2({X})$, we say that $L$ satisfies the finite speed of
propagation property if
 \begin{equation*}\label{FS}
  \tag{FS}
\supp  K_{\cos(t\SL\,\,)} \subseteq \D_t, \quad \forall t> 0\,.
\end{equation*}
\end{defn}

Property \eqref{FS} holds for most of second order self-adjoint operators  and  is equivalent to celebrated Davies-Gaffney
estimates, see for example  \cite{CouS} and  \cite{S2}.

\begin{lemma}\label{le2.2}
Assume that $L$ satisfies the   property  {\rm (FS)} and that $F$ is an even bounded Borel function with Fourier
transform  $\hat{F} \in L^1(\RR)$ and that
$\supp \hat{F} \subseteq [-r, r]$.
Then
$$
\supp K_{F(\!\SL\,\,)} \subseteq \D_r.
$$
\end{lemma}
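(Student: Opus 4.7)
The plan is to use Fourier inversion to express $F(\sqrt{L})$ as a superposition of the cosine propagators $\cos(t\sqrt{L})$, and then to transfer the support property from (FS) to the superposition.

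First I would note that since $F$ is even and $\hat{F}\in L^1(\RR)$ is also even, the inversion formula rewrites $F$ as
$$F(\xi)=\frac{1}{2\pi}\int_{-r}^{r}\hat{F}(t)\cos(t\xi)\wrt t,$$
where the integration is restricted to $[-r,r]$ because $\supp\hat F\subseteq[-r,r]$. Applying the spectral theorem for the self-adjoint operator $\sqrt{L}$, and using that $\|\cos(t\sqrt{L})\|_{2\to 2}\le 1$ together with $\hat F\in L^1$, one obtains a Bochner integral identity
$$F(\SL)=\frac{1}{2\pi}\int_{-r}^{r}\hat{F}(t)\cos(t\SL)\wrt t$$
valid as a bounded operator on $L^2(X)$.

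Next I would verify the support condition in the sense of \eqref{eq2.4}. Take any $f_1\in L^{p}(B(x_1,r_1))$ and $f_2\in L^{q'}(B(x_2,r_2))$ with $r_1+r_2+r<d(x_1,x_2)$. By Fubini's theorem (justified by $\|\hat F\|_{L^1}<\infty$ and $|\langle\cos(t\SL)f_1,f_2\rangle|\le\|f_1\|_2\|f_2\|_2$ after truncating to $L^2\cap L^p$ via the bounded supports, and a standard density argument otherwise),
$$\langle F(\SL)f_1,f_2\rangle=\frac{1}{2\pi}\int_{-r}^{r}\hat{F}(t)\,\langle\cos(t\SL)f_1,f_2\rangle\wrt t.$$
For every $|t|\le r$ one has $r_1+r_2+|t|\le r_1+r_2+r<d(x_1,x_2)$, so the hypothesis (FS) forces $\langle\cos(t\SL)f_1,f_2\rangle=0$. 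Hence the inner product vanishes, which is precisely what is meant by $\supp K_{F(\SL)}\subseteq\D_r$.

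The main technical point, and the only one that requires care, is the justification of the Bochner integral representation and the interchange of integration with the pairing $\langle\cdot,f_2\rangle$. This follows from the uniform bound $\|\cos(t\SL)\|_{2\to 2}\le 1$, the integrability $\hat F\in L^1(\RR)$, and the fact that on any bounded interval the map $t\mapsto\cos(t\SL)$ is strongly continuous on $L^2$, so the integral converges in the strong operator topology and coincides with $F(\SL)$ by the spectral theorem applied at each spectral value.
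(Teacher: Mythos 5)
Your proof is correct and follows exactly the same route as the paper: write $F(\sqrt{L})$ via Fourier inversion as $\frac{1}{2\pi}\int_{-r}^{r}\hat F(t)\cos(t\sqrt{L})\,dt$ and then read off the support property from (FS). The only difference is that you spell out the Bochner-integral justification and the pairing argument, which the paper leaves implicit.
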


\begin{proof}
If $F$ is an even function, then by the Fourier inversion formula,
$$
F(\!\SL\,\,) =\frac{1}{2\pi}\int_{-\infty}^{+\infty}
  \hat{F}(t) \cos(t\SL\,\,) \;dt.
$$
But $\supp\hat{F} \subseteq [-r,r]$,
and the lemma follows then from {\rm (FS)}.
\end{proof}

Since our discussion covers general elliptic operators, we need  some related estimates which are slightly  more technical.
We start with defining the multiplication operator.   For any function $W:X\rightarrow \mathbb{R}$,  we define $M_{W}$  by
$$(M_{W}f)(x)=W(x)f(x).$$
In what follows, we shall identify  the operator $M_W$ with  the function $W$. This means that, if $T$ is a linear operator,
we shall denote by  $W_1T$, $TW_2$, $W_1TW_2$,  the operators  $M_{W_1}T, TM_{W_2}$, $M_{W_1}TM_{W_2}$, respectively.

We can now formulate the  weighted $L^p-L^2$ estimates (Sobolev type conditions). Firstly we  consider
 \begin{equation*}\label{EVp}
 \tag{${\rm EV_{p,2}  }$}
\sup_{t>0}  \|   e^{-t^2L}\, {V_{{t}}^{1/p-1/2}} \|_{p \to 2} < +\infty,
\end{equation*}
where $V_t(x)=V(x,t)$ and $1 \le p< 2$. An detailed and systematic discussion on the condition \eqref{EVp} can founded
in \cite{BCS}. The following condition which was introduced  in \cite{COSY}:
\begin{equation*}\label{Gp}
 \tag{${\rm G_{p,2}  }$}
\big\|e^{-t^2L}\bchi_{B(x, s)}\big\|_{p\to 2} \leq
CV(x, s)^{{1\over 2}-{1\over p}} \left({s\over  {t}}\right)^{n({1\over p}-{1\over 2})}
\end{equation*}
holds for all $x\in X$ and  $s\geq t>0$.

\begin{lemma}\label {le2.0} Let $1\leq p< 2$.
 Suppose that $L$  satisfies
the property  \eqref{FS}.
Then the following are equivalent:

(i) \eqref{EVp} holds.

(ii) \eqref{Gp} holds.

(iii) For every  $N>n(1/p-1/2)$ there exists $C$ such that
\begin{eqnarray*}
 \big\|   (I+t \sqrt{L}\,)^{-N} V_t^{{\frac1p}-{\frac12}} \big\|_{p \to 2}
 \leq  C.
\end{eqnarray*}

(iv)
For all $x\in X$ and    $r\geq  t >0$   we have
\begin{equation*}
\big\|(I+t\SL\,\,)^{-N}\bchi_{B(x, r)}\big\|_{p\to 2} \leq
CV(x, r)^{{\frac 12}-{\frac 1p}} \left({\frac rt}\right)^{n\left({\frac 1p}-{\frac 12}\right)}.
\end{equation*}

\end{lemma}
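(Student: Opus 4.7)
The plan is to prove all six implications in three kinds of moves. The four conditions split naturally into two pairs: (i), (iii) feature the weight $V_t^{1/p-1/2}$, while (ii), (iv) use a ball characteristic, and within each pair the transition will use only doubling. Between pairs, one transitions from the heat semigroup $e^{-t^2L}$ to the resolvent $(I+t\sqrt L)^{-N}$ (or vice versa) by $L^2$ spectral calculus in one direction and by a subordination integral in the other. Concretely I will establish (A) (i)$\Leftrightarrow$(ii) and (iii)$\Leftrightarrow$(iv); (B) (iii)$\Rightarrow$(i) and (iv)$\Rightarrow$(ii); (C) (i)$\Rightarrow$(iii) and (ii)$\Rightarrow$(iv).

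For the easy half of (A), namely (i)$\Rightarrow$(ii): for $s\ge t$ and $y\in B(x,s)$, the doubling property together with $x\in B(y,2s)$ gives $V(y,t)\gtrsim (t/s)^n V(x,s)$, so
\[
\bchi_{B(x,s)}(y)\le C\,(s/t)^{n(1/p-1/2)}\,V(x,s)^{1/2-1/p}\,V_t^{1/p-1/2}(y).
\]
Writing $\bchi_{B(x,s)}f = V_t^{1/p-1/2}g$ with $g:=V_t^{1/2-1/p}\bchi_{B(x,s)}f$ transfers the scalar factor into $\|g\|_p$, and (i) yields (ii). The argument for (iii)$\Rightarrow$(iv) is identical with $(I+t\sqrt L)^{-N}$ in place of $e^{-t^2L}$.

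For (B), the factorization $e^{-t^2L} = \psi_t(\sqrt L)\cdot(I+t\sqrt L)^{-N}$ with $\psi_t(\lambda):=(1+t\lambda)^Ne^{-t^2\lambda^2}$ and the uniform bound $\|\psi_t\|_{L^\infty([0,\infty))}\le C_N$ give, via the spectral theorem, $\|e^{-t^2L}g\|_2\le C_N\|(I+t\sqrt L)^{-N}g\|_2$ for every $g\in L^2$. Setting $g=V_t^{1/p-1/2}f$ turns (iii) into (i), while $g=\bchi_{B(x,s)}f$ turns (iv) into (ii). For (C), the subordination identities $(1+x)^{-N}=\Gamma(N)^{-1}\int_0^\infty s^{N-1}e^{-s(1+x)}\,ds$ and $e^{-r\sqrt L} = (r/2\sqrt\pi)\int_0^\infty u^{-3/2}e^{-r^2/(4u)}e^{-uL}\,du$ together represent $(I+t\sqrt L)^{-N}$ as an integral of $e^{-\tau^2L}$ against an explicit kernel. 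Applying (i) under the integral, with the doubling comparison $V_t^{1/p-1/2}\le C\max\{1,(t/\tau)^{n(1/p-1/2)}\}V_\tau^{1/p-1/2}$ and the change of variable $\tau=st/(2v)$, reduces matters to the convergence of $\int_0^\infty\!\int_0^\infty s^{N-1}e^{-s}e^{-v^2}\max\{1,(2v/s)^{n(1/p-1/2)}\}\,dv\,ds$, which holds exactly when $N>n(1/p-1/2)$; this gives (iii). Using (ii) in the same representation, splitting the $\tau$-integral at $\tau=r$ and bounding $\bchi_{B(x,r)}\le \bchi_{B(x,\tau)}$ on $\tau>r$, yields (ii)$\Rightarrow$(iv).

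The main obstacle is the hard direction of (A), namely (ii)$\Rightarrow$(i) (equivalently (iv)$\Rightarrow$(iii)). Cover $X$ by balls $\{B(x_k,t)\}$ of bounded overlap with a subordinate partition of unity $\{\eta_k\}$. On $\supp\eta_k$ doubling gives $V_t^{1/p-1/2}\asymp V(x_k,t)^{1/p-1/2}$, so (ii) with $s=t$ delivers $\|g_k\|_2\lesssim \|\eta_k f\|_p$ for $g_k:=e^{-t^2L}(V_t^{1/p-1/2}\eta_k f)$. Since the $g_k$ are not supported disjointly, summing them requires quasi-orthogonality, $\|\sum_k g_k\|_2^2\lesssim \sum_k\|g_k\|_2^2$, which in turn rests on an off-diagonal estimate of the form $\|\bchi_{B(x_j,t)}e^{-2t^2L}\bchi_{B(x_k,t)}\|_{2\to 2}\lesssim (1+d(x_j,x_k)/t)^{-M}$. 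This is exactly where the finite-speed-of-propagation hypothesis enters through Lemma~\ref{le2.2}: expressing $e^{-2t^2L}$ via its even Fourier representation in $\cos(s\sqrt L)$ and truncating the $s$-integral at $s\sim d(x_j,x_k)$ localizes the kernel, with the Gaussian tail controlling the truncation error. Schur's lemma then delivers the quasi-orthogonality, and the embedding $\ell^p\hookrightarrow\ell^2$ for $p\le 2$ together with the bounded-overlap estimate $\sum_k\|\eta_k f\|_p^p\lesssim\|f\|_p^p$ completes the argument.
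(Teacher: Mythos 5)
Your reorganization into the three families (A) doubling-only, (B) spectral calculus, (C) subordination is attractive and largely sound. Parts (B) and (C) are correct as written: the factorization $e^{-t^2L}=\psi_t(\sqrt L)(I+t\sqrt L)^{-N}$ with $\psi_t$ uniformly bounded handles (iii)$\Rightarrow$(i) and (iv)$\Rightarrow$(ii) cleanly; and your subordination computation for (i)$\Rightarrow$(iii) does land exactly on the threshold $N>n(1/p-1/2)$ needed for convergence, matching the hypothesis. The easy half of (A), namely (i)$\Rightarrow$(ii) and (iii)$\Rightarrow$(iv), is also correct and is essentially the paper's own argument for (iii)$\Rightarrow$(iv): absorb $\chi_{B(x,s)}$ into the weight, compare $V_t$ with $V(x,s)$ by doubling.

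The gap is in the hard half of (A), the implication (ii)$\Rightarrow$(i). You write that the $L^2\to L^2$ off-diagonal estimate $\|\chi_{B(x_j,t)}e^{-2t^2L}\chi_{B(x_k,t)}\|_{2\to2}\lesssim(1+d(x_j,x_k)/t)^{-M}$, obtained from (FS) via the $\cos(s\sqrt L)$ representation, yields the quasi-orthogonality $\|\sum_k g_k\|_2^2\lesssim\sum_k\|g_k\|_2^2$ by Schur's lemma. But $\langle g_j,g_k\rangle=\langle e^{-2t^2L}h_j,h_k\rangle$ with $h_k=V_t^{1/p-1/2}\eta_k f$, and the $L^2\to L^2$ off-diagonal bound controls this only in terms of $\|h_j\|_2\|h_k\|_2$, \emph{not} in terms of $\|g_j\|_2\|g_k\|_2$ or $\|\eta_j f\|_p\|\eta_k f\|_p$. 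The quantity $\|h_j\|_2$ is not controlled: $f$ is only in $L^p$ with $p<2$, and H\"older goes the wrong way for $L^p(B)\hookrightarrow L^2(B)$. The only generic bound for the cross terms is the trivial $|\langle g_j,g_k\rangle|\le\|g_j\|_2\|g_k\|_2$ (Cauchy--Schwarz), which carries no decay, so Schur's lemma does not close. What is actually needed is an $L^p\to L^2$ off-diagonal estimate such as $\|\chi_{B(x_m,t)}e^{-t^2L}V_t^{1/p-1/2}\chi_{B(x_k,t)}\|_{p\to2}\lesssim(1+d(x_m,x_k)/t)^{-M}$; this is a genuine self-improvement of (ii) combined with Davies--Gaffney and is not automatic. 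The paper avoids this entirely: via the subordination identity it replaces $e^{-tL}$ by operators $F_a(r\sqrt L)$ whose kernel has \emph{exact} support $\D_r$ by (FS) and Lemma~\ref{le2.2}, and then applies \cite[Lemma 4.1.2]{BCS}, which localizes an $L^p\to L^2$ bound for a finite-propagation operator by bounded-overlap covering — no quasi-orthogonality argument is required because the localized images are genuinely disjointly supported up to bounded overlap. If you want to keep your (ii)$\Rightarrow$(i) route, you must either prove the $L^p\to L^2$ off-diagonal estimate (e.g., by the same decomposition of $e^{-t^2\lambda^2}$ into dyadic pieces with compactly supported Fourier transform applied to the semigroup factor itself, not merely inside the cross term), or, more simply, follow the paper's route of first passing to a compactly supported multiplier and invoking the localization lemma of \cite{BCS}.
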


\begin{proof}
The equivalence of the conditions $(ii)$ and $(iv)$ was verified in \cite[Proposition I.3]{COSY}.
The similar argument shows that the conditions $(i)$ and $(iii)$ are also
equivalent. Thus it is enough to show equivalence between  $(iii)$ and $(iv)$.

First we prove that
$(iii)$ implies $(iv)$. Note that by the doubling condition for all $y \in B(x,r)$ one has $V(x,r)\sim V(y,r)$.
 Hence for all $x\in X$ and    $r\geq  t >0$,
\begin{align*}
\big\|(I+t\SL\,\,)^{-N}\bchi_{B(x, r)}\big\|_{p\to 2}
&\le   C\big\|(I+t\SL\,\,)^{-N}\bchi_{B(x, r)}
  V_r^{{\frac 1p}-{\frac 12}}
  V(x, r)^{{\frac 12}-{\frac 1p}}
  \big\|_{p\to 2}
\\
&\le  C \big\|(I+t\SL\,\,)^{-N}\bchi_{B(x, r)}
  V_t^{{\frac 1p}-{\frac 12}}
  \big\|_{p\to 2} V(x, r)^{{\frac 12}-{\frac 1p}}
  \left({\frac rt}\right)^{n\left({\frac 1p}-{\frac 12}\right)}
  \\
  & \le  C \big\|(I+t\SL\,\,)^{-N}
  V_t^{{\frac 1p}-{\frac 12}}
  \big\|_{p\to 2} V(x, r)^{{\frac 12}-{\frac 1p}}
  \left({\frac rt}\right)^{n\left({\frac 1p}-{\frac 12}\right)} .
 \end{align*}
 By the assumption $(iii)$ it follows that
 \begin{align*}
  \big\|(I+t\SL\,\,)^{-N}\bchi_{B(x, r)}\big\|_{p\to 2} \le C V(x, r)^{{\frac 12}-{\frac 1p}}
  \left({\frac rt}\right)^{n\left({\frac 1p}-{\frac 12}\right)},
\end{align*}
where  we used  (iii) in the last inequality.

We now show that $(iv)$ implies $(iii)$. Let us recall the well known identity,  for $a>0$,
$$
C_a\int_0^\infty\left(1-\frac{x^2}{s}\right)^a_+e^{-s/4}s^a \,ds=e^{-x^2/4}
$$
with some suitable $C_a>0$.
Taking the Fourier transform on both sides of the above equality yields
$$
\int_0^\infty F_a(\sqrt s\lambda)  s^{a+\frac{1}{2}}e^{-s/4}ds=e^{-\lambda^2},
$$
where $F_a$ is the Fourier transform of the function $t \to (1-{t^2})^a_+$ multiplied
by the appropriate constant. Hence, by spectral theory,
$$
\int_0^\infty F_a(\sqrt{stL})  s^{a+\frac{1}{2}}e^{-s/4}ds=e^{-tL}.
$$
Using this and Minkowski's inequality give
\begin{eqnarray*}
	&&\quad \|   e^{-tL}\,  {V_{{\sqrt{t}}}^{{\frac 1p}-{\frac 12}}} \|_{p \to 2}
	\le \int_0^\infty \|   F_a(\sqrt{tsL})    {V_{{\sqrt{t}}}^{{\frac 1p}-{\frac 12}}} \|_{p \to 2} s^{a+\frac{1}{2}}e^{-s/4}ds
	\\
	 && \le C\int_0^\infty \|   F_a(\sqrt{tsL})    {V_{{\sqrt{st}}}^{{\frac 1p}-{\frac 12}}} \|_{p \to 2}
	\Big(\sqrt{s}+\frac{1}{\sqrt{s}}\Big)^{{\frac 1p}-{\frac 12}}s^{a+\frac{1}{2}}e^{-s/4}ds,
\end{eqnarray*}
hence, with $a$ large enough,
\begin{equation}\label{eqq}
\sup_{t>0} \|   e^{-tL}\,  {V_{{\sqrt{t}}}^{{\frac 1p}-{\frac 12}}} \|_{p \to 2}
\le C'\sup_{t>0}\|   F_a(\sqrt{tL})
{V_{{\sqrt{t}}}^{{\frac 1p}-{\frac 12}}} \|_{p \to 2}.
\end{equation}
We note that  $\Phi=F_a$ satisfies the assumptions of Lemma \ref{le2.2}. Thus
$
\mbox{supp} \, {F_a(r\sqrt {L})} \subseteq D_r,\ \forall
\,r>0.
$ 
Hence, by \cite[Lemma 4.1.2]{BCS}
\begin{equation}
\label{eqq1}
\|F_a(r\sqrt {L}){V_{{r}}^{{\frac 1p}-{\frac 12}}}\|_{p \to 2}
\le C \sup_{x\in M} \| F_a(r\sqrt {L}){V_{{r}}^{{\frac 1p}-{\frac 12}}}\bchi_{B(x,  r) }\|_{p\to 2}.
\end{equation}
Observe that
\begin{eqnarray*}
\| F_a(r\sqrt {L}){V_{r}^{{\frac 1p}-{\frac 12}}}\bchi_{B(x,  r) }\|_{p\to 2}
&\le&
\| F_a(r\sqrt {L})
(1+r\sqrt {L})^{N}
(1+r\sqrt {L})^{-N}
{V_{{r}}^{{\frac 1p}-{\frac 12}}}\bchi_{B(x,  r) }\|_{p\to 2}\\ &\le&
\| F_a(r\sqrt {L})
(1+r\sqrt {L})^{N}\|_{2\to 2}
\|(1+r\sqrt {L})^{-N}
{V_{{r}}^{{\frac 1p}-{\frac 12}}}\bchi_{B(x,  r) }\|_{p\to 2}\\
&\le& C \|(1+r\sqrt {L})^{-N}
{V_{{r}}^{{\frac 1p}-{\frac 12}}}\bchi_{B(x,  r) }\|_{p\to 2}.
\end{eqnarray*}
From this and  $(iv)$ with $r=t$, we get
\begin{eqnarray*}
\| F_a(r\sqrt {L}){V_{r}^{{\frac 1p}-{\frac 12}}}\bchi_{B(x,  r) }\|_{p\to 2}
 &\le&  C
{V(x,r)^{{\frac 1p}-{\frac 12}}}
\|(1+r\sqrt {L})^{-N}
\bchi_{B(x,  r) }\|_{p\to 2} \le C.
\end{eqnarray*}
Combining this with \eqref{eqq} and \eqref{eqq1} shows \eqref{EVp} which is equivalent with $(iii)$.
 \end{proof}

Recall  that $L$ is a non-negative self-adjoint operator on $L^2(X)$
and that
the semigroup $e^{-tL}$, generated by $-L$ on $L^2(X)$,  has the kernel  $p_t(x,y)$
which  satisfies
the following  Gaussian upper bound:
\begin{equation*}\label{ge}
\tag{GE}
\big|p_t(x,y)\big|\leq {C\over V(x,\sqrt{t})} \exp\left(-c {  d^2(x,y)\over    t } \right)
\end{equation*}
for all $t>0$,  and $x,y\in X,$   where $C$ and $ c$   are positive constants.
The stimate \eqref{ge} follows from \eqref{FS} and (${\color{red} \rm EV_{1,2}}$).
Indeed,  (${\color{red} \rm EV_{1,2}}$) is equivalent to the standard Gaussian
heat kernel estimate which is  valid for a broad class of second order
elliptic operators, see e.g. \cite{BCS}.

It is not difficult to see that, for $1\leq p<2$, both  the conditions \eqref{FS} and \eqref{EVp}
 follow from the Gaussian estimate \eqref{ge}. But the converse is not true in general. For some $1<p<2$, there
are operators which fail to  satisfy  \eqref{ge} while  \eqref{FS} and \eqref{EVp} hold for them. Examples for such
operators are provided by  the Schr\"odinger operators
with inverse-square potential, see \cite{CouS} and the second order elliptic operators with rough
lower order terms, see \cite{LSV}.

\noindent{\bf 2.2. Stein-Tomas restriction type condition.}
Let $1\leq p< 2$ and $2\leq
q\leq\infty$.  Following \cite{COSY}, we say that $L$ satisfies the Stein-Tomas
restriction type condition if  for any $R>0$ and all Borel functions $F$ supported   in $ [0,R],$
\begin{equation*}\label{st}
\tag{${\rm ST^q_{p, 2}}$}
\big\|F(\!\SL\,\,)\bchi_{B(x, r)} \big\|_{p\to 2} \leq CV(x,
r)^{{1\over 2}-{1\over p}} \big( Rr \big)^{n({1\over p}-{1\over
		2})}\big\| \delta_R F  \big\|_{q}
\end{equation*}
for all $x\in X$ and all $r\geq 1/R$.
To motivate this definition we state the following two lemmas.

\begin{lemma}\label{le2.3}
Assume that $C^{-1}r^n \leq V(x, r)\leq Cr^n$ for all $x\in X$ and $r>0$. Then ${\rm (ST^2_{p, 2})}$ is equivalent to ${\rm (R_p)}$.
\end{lemma}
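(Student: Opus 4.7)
The plan is to observe that under the uniform volume growth $V(x,r)\sim r^n$, the weight on the right-hand side of $(\mathrm{ST}^2_{p,2})$ collapses to a quantity independent of the localization radius $r$. Indeed, since $1/p-1/2>0$,
\[
V(x,r)^{\frac12-\frac1p}(Rr)^{n(\frac1p-\frac12)} \;\sim\; r^{n(\frac12-\frac1p)}\cdot r^{n(\frac1p-\frac12)}\cdot R^{n(\frac1p-\frac12)} \;=\; R^{n(\frac1p-\frac12)}.
\]
Hence $(\mathrm{ST}^2_{p,2})$ is equivalent to the uniform localized restriction estimate
\[
\|F(\SL\,)\bchi_{B(x,r)}\|_{p\to 2}\leq CR^{n(\frac1p-\frac12)}\|\delta_R F\|_{2},
\]
valid for all $x\in X$ and all $r\geq 1/R$, with a constant independent of both $x$ and $r$.

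From this reformulation, the implication $(\mathrm{R}_p)\Rightarrow(\mathrm{ST}^2_{p,2})$ is immediate: factor the composition as $F(\SL\,)\circ M_{\bchi_{B(x,r)}}$, apply $(\mathrm{R}_p)$, and use the trivial bound $\|M_{\bchi_{B(x,r)}}\|_{p\to p}\leq 1$.

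For the converse $(\mathrm{ST}^2_{p,2})\Rightarrow(\mathrm{R}_p)$, the decisive feature is precisely that the constant in the reformulated estimate is uniform in $r$. Fix a basepoint $x_0\in X$ and take $g\in L^p(X)$ with compact support; such functions form a dense subspace of $L^p(X)$. Choose $r\geq 1/R$ large enough so that $\supp g\subset B(x_0,r)$; then $\bchi_{B(x_0,r)} g = g$, and applying the localized estimate to $g$ gives
\[
\|F(\SL\,)g\|_2 \leq CR^{n(\frac1p-\frac12)}\|\delta_R F\|_2\,\|g\|_p,
\]
which is exactly $(\mathrm{R}_p)$ on this dense class. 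The full estimate then follows by density.

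I do not anticipate any serious obstacle; the real content of the lemma is the algebraic cancellation in the volume weight. The only minor technicality is to justify the density extension when $F$ is a general (possibly unbounded) Borel function with $\|\delta_R F\|_2<\infty$. This is handled by truncating $F$ to $F_N=F\cdot\bchi_{\{|F|\leq N\}}$, running the argument for each bounded $F_N$ (where $F_N(\SL\,)$ is a bounded operator on $L^2$), and letting $N\to\infty$ via monotone convergence applied to $\|\delta_R F_N\|_2\to\|\delta_R F\|_2$ together with the spectral theorem.
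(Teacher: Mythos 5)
The paper does not reproduce a proof of this lemma, referring instead to \cite[Propositions I.3, I.4]{COSY}; your argument is correct and is the natural, self-contained verification. The key algebraic cancellation $V(x,r)^{\frac12-\frac1p}(Rr)^{n(\frac1p-\frac12)}\sim R^{n(\frac1p-\frac12)}$ is exactly the content, the forward direction via $\|M_{\bchi_{B(x,r)}}\|_{p\to p}\le1$ is correct, and the converse via exhausting $X$ by large balls is sound. Two small points to make the density/extension step fully rigorous: for $1\le p<2$ it is cleaner to test on $g\in L^2$ with compact support (these lie in $L^p$ since balls have finite measure, and are dense in $L^p$), so that $F(\SL\,)g$ is a priori defined for bounded $F$; and your truncation argument for unbounded $F$ does work, since the uniform bound $\|F_N(\SL\,)g\|_2\le C\|\delta_R F\|_2\|g\|_p$ together with $\|F_N(\SL\,)g\|_2^2=\int|F_N|^2\,d\langle E_{\sqrt L}(\lambda)g,g\rangle\uparrow\int|F|^2\,d\langle E_{\sqrt L}(\lambda)g,g\rangle$ forces $g$ into the domain of $F(\SL\,)$ with the claimed bound. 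No gaps.
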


\begin{lemma}\label{le2.4}
 Assume that a metric measure space $(X,d,\mu)$ satisfies the
	doubling condition \eqref{eq2.2}. Then ${\rm (ST^\infty_{p, 2})}$ is equivalent to \eqref{EVp} or any other condition  listed in Lemma~\ref{le2.0}.
\end{lemma}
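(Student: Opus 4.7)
The plan is to establish the equivalence by showing the two directions separately, using condition (iv) of Lemma~\ref{le2.0} (the resolvent bound on balls) as the intermediate bridge. The forward direction (restriction $\Rightarrow$ resolvent) will be handled by dyadic decomposition of the resolvent symbol, while the reverse direction (resolvent $\Rightarrow$ restriction) follows from a simple factorization combined with the spectral theorem.

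First, to show ${\rm (ST^\infty_{p,2})} \Rightarrow (iv)$, fix $x\in X$, $r\ge t>0$, and $N>n(1/p-1/2)$. I split the symbol $F(\lambda)=(1+t\lambda)^{-N}$ at the frequency cutoff $\lambda=1/r$: write $F=F_0+\sum_{k\ge 0}F_k$ where $F_0$ is supported in $[0,1/r]$ and each $F_k$ for $k\ge 0$ is supported in a dyadic annulus $[2^k/r,2^{k+1}/r]$ (obtained with a smooth partition of unity). To $F_0$ I apply ${\rm (ST^\infty_{p,2})}$ with $R=1/r$ (which is admissible since $r\ge 1/R$), noting that $\|\delta_{1/r}F_0\|_\infty\le 1$ because $t/r\le 1$. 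To each $F_k$ I apply ${\rm (ST^\infty_{p,2})}$ with $R_k=2^{k+1}/r$, obtaining a bound
\[
\|F_k(\!\SL\,)\bchi_{B(x,r)}\|_{p\to 2}\le CV(x,r)^{\frac12-\frac1p}2^{kn(\frac1p-\frac12)}\bigl(1+t\cdot 2^k/r\bigr)^{-N}.
\]
Summing over $k$ and splitting according to whether $2^k\le r/t$ or $2^k>r/t$, the geometric series converges thanks to the choice $N>n(1/p-1/2)$ and yields the desired bound $CV(x,r)^{1/2-1/p}(r/t)^{n(1/p-1/2)}$, which is (iv).

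For the converse, assume (iv). Given $F$ supported in $[0,R]$ and $r\ge 1/R$, I factor
\[
F(\!\SL\,)=G(\!\SL\,)\,(I+\SL/R)^{-N},\qquad G(\lambda):=F(\lambda)(1+\lambda/R)^{N}.
\]
Since $G$ is still supported in $[0,R]$, one has $\|G\|_\infty\le 2^N\|\delta_RF\|_\infty$, so the spectral theorem gives $\|G(\!\SL\,)\|_{2\to 2}\le 2^N\|\delta_RF\|_\infty$. Applying (iv) with $t=1/R$ (permissible since $r\ge 1/R=t$) controls the second factor by $CV(x,r)^{1/2-1/p}(Rr)^{n(1/p-1/2)}$. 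Composing the two estimates yields exactly ${\rm (ST^\infty_{p,2})}$.

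The main technical nuisance is the bookkeeping in the dyadic sum in the forward direction: one must carefully split the range of summation at $2^k\sim r/t$ to extract the factor $(r/t)^{n(1/p-1/2)}$ cleanly. Apart from that, neither direction requires further ingredients beyond the doubling property (already used implicitly in Lemma~\ref{le2.0}) and boundedness of spectral multipliers by their sup-norm, so the argument is essentially self-contained.
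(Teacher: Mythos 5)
Your proof is correct. The paper itself does not prove Lemma~\ref{le2.4} but simply refers to \cite[Propositions~I.3, I.4]{COSY}, so there is no in-text argument to compare against; your route through condition (iv) of Lemma~\ref{le2.0} is the natural and standard one, and the two directions you give (dyadic decomposition of $(1+t\lambda)^{-N}$ using $({\rm ST}^\infty_{p,2})$ scale by scale, and the resolvent factorization $F = G\cdot(1+\lambda/R)^{-N}$ with the $L^2$ spectral bound on $G(\SL)$) are exactly what is needed. One bookkeeping slip: you write $F=F_0+\sum_{k\ge 0}F_k$ and then assign $F_0$ to $[0,1/r]$ \emph{and} let $F_k$ for $k\ge 0$ live on the annuli $[2^k/r,2^{k+1}/r]$, so the symbol $F_0$ is used twice; this should read $F=F_{-1}+\sum_{k\ge0}F_k$ (or start the annular sum at $k\ge 1$). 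A second minor point: in the annular sum one should in practice use a smooth partition of unity, so each piece is supported in a slightly enlarged annulus; this only changes constants. Both are cosmetic and do not affect the validity of the argument. Note also that the chain from (iv) back to \eqref{EVp} invokes Lemma~\ref{le2.0}, which assumes \eqref{FS}; your direct equivalence $({\rm ST}^\infty_{p,2})\Longleftrightarrow$ (iv) does not need \eqref{FS}, but the full statement of Lemma~\ref{le2.4} implicitly inherits that hypothesis through Lemma~\ref{le2.0}, which is fine.
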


For the proofs of these Lemmas  and more on the condition \eqref{st} we refer the reader to \cite{COSY}, especially
\cite[Proposition I.3]{COSY} and \cite[Proposition I.4]{COSY}.

The following  result for  the spectral multipliers of non-negative self-adjoint
operators was one of the main results obtained in  \cite[Theorem  I.16, Corollary I.6]{COSY}.
Fix a non-trivial auxiliary function $\eta \in C_c^\infty(0,\infty)$.

\begin{prop}  \label{prop2.4}
  Assume that   $L$ satisfies  the property {\rm (FS)}
and the condition  ${\rm (ST^{q}_{p, 2})}$ for some $p,q$ satisfying
$1\leq p<2$ and $2\leq q\leq \infty$. 

\begin{itemize}
\item[(i)]  Then for any   bounded Borel
function $F$ such that
$\sup_{t>0}\|\eta\, \delta_tF\|_{W^{\beta, q}}<\infty $ for some
$\beta>\max\{n(1/p-1/2),1/q\}$ the operator
$F(\!\SL\,\,)$ is bounded on $L^r(X)$ for all $p<r<p'$.
In addition,
\begin{eqnarray*}
   \|F(\!\SL\,\,)  \|_{r\to r}\leq    C_\beta\Big(\sup_{t>0}\|\eta\, \delta_tF\|_{W^{\beta, q}}
   + |F(0)|\Big).
\end{eqnarray*}

\item[(ii)] For all  ${\alpha}>  n(1/p-1/2)-1/q$  we have the uniform bound, for $R>0$,
\begin{eqnarray*} 
\Big\|\Big(I-{L\over R^2}\Big)_+^{\alpha}\Big\|_{p\to p}\leq C.
\end{eqnarray*}
\end{itemize}
\end{prop}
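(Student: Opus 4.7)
My plan is to follow the classical paradigm for spectral multiplier theorems under restriction-type hypotheses, in the spirit of \cite{COSY}. The $L^2\to L^2$ bound is immediate from the spectral theorem: since $\beta>1/q$, Sobolev embedding gives $\|F\|_\infty\le C(\sup_{t>0}\|\eta\,\delta_tF\|_{W^{\beta,q}}+|F(0)|)$, so $\|F(\!\SL\,\,)\|_{2\to 2}$ is controlled by the right-hand side of the claimed estimate. By complex interpolation with an $L^p\to L^p$ bound (and duality/symmetry in $p,p'$), the full range $p<r<p'$ would follow once we establish the $L^p$ endpoint.

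To prove the $L^p\to L^p$ bound, I would fix $\eta\in C_c^\infty(0,\infty)$ with $\sum_{\ell\in\ZZ}\eta(2^{-\ell}\lambda)=1$ on $(0,\infty)$ and decompose $F-F(0)=\sum_\ell F_\ell$ where $F_\ell(\lambda)=(F(\lambda)-F(0))\eta(2^{-\ell}\lambda)$ is localized at spectral scale $2^\ell$. Each $F_\ell$ is further split as $F_\ell=\sum_{k\ge 0}F_\ell^k$ via a Littlewood--Paley partition of the Fourier transform, $\widehat{F_\ell^k}=\phi_k\,\widehat{F_\ell}$, with $\phi_k$ supported in $\{|t|\sim 2^{k-\ell}\}$. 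By Lemma~\ref{le2.2}, the kernel of $F_\ell^k(\!\SL\,\,)$ is supported in $\{(x,y):d(x,y)\le c\,2^{k-\ell}\}$. I would then cover $X$ by balls of radius $2^{k-\ell}$ with bounded overlap, apply $(ST^q_{p,2})$ on each ball (with $R=2^\ell$, $r=2^{k-\ell}\ge 1/R$), and combine with the trivial $L^2\to L^2$ spectral bound through a $TT^\ast$-type localization argument, to obtain
\begin{align*}
\|F_\ell^k(\!\SL\,\,)\|_{p\to p}\le C\,2^{kn(1/p-1/2)}\|\delta_{2^\ell}F_\ell^k\|_q.
\end{align*}

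Because $\phi_k$ corresponds, after rescaling, to a Littlewood--Paley piece at scale $2^k$, one has $\|\delta_{2^\ell}F_\ell^k\|_q\le C\,2^{-k\beta}\|\eta\,\delta_{2^\ell}F\|_{W^{\beta,q}}$, so the sum over $k\ge 0$ converges exactly when $\beta>n(1/p-1/2)$; the sum over $\ell$ is harmless either by almost-orthogonality of the spectral pieces or via a vector-valued Littlewood--Paley argument. For part (ii), I would apply part (i) to $F_R(\lambda)=(1-\lambda^2/R^2)_+^\alpha$: a direct computation gives $\|(1-\lambda^2)_+^\alpha\|_{W^{\beta,q}}<\infty$ if and only if $\beta<\alpha+1/q$, and combining with $\beta>n(1/p-1/2)$ from part (i) forces $\alpha>n(1/p-1/2)-1/q$, uniformly in $R$ after rescaling by $\delta_{1/R}$.

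The main technical obstacle is producing the displayed $L^p\to L^p$ bound on each dyadic piece $F_\ell^k(\!\SL\,\,)$ with exactly the right growth $2^{kn(1/p-1/2)}$: this requires carefully combining a Calder\'on--Zygmund-type covering of $X$ with the finite-speed-of-propagation support property, so that the $L^p\to L^2$ restriction estimate $(ST^q_{p,2})$ and (via duality) its $L^2\to L^{p'}$ counterpart assemble into an $L^p\to L^p$ estimate with the correct weight and summable tails in $k$.
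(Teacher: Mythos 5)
Note first that the paper does not prove this statement; it is quoted verbatim from \cite[Theorem~I.16, Corollary~I.6]{COSY}. Your outline does reproduce the architecture of the proof there and in \cite{DOS}: a dyadic spectral decomposition $F-F(0)=\sum_\ell F_\ell$, a secondary Littlewood--Paley split of each $F_\ell$ on the Fourier-transform side, Lemma~\ref{le2.2} and \eqref{FS} to localize the kernel of $F^k_\ell(\!\SL\,\,)$, then \eqref{st} plus H\"older on balls of bounded overlap to reach your displayed $L^p\to L^p$ bound with the factor $2^{kn(1/p-1/2)}$, and summation over $k$ paid for by the $W^{\beta,q}$ regularity.

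Two real gaps remain. The first concerns the summation over $\ell$, which is where most of the work actually lies. You cannot simply add the norms $\|F_\ell(\!\SL\,\,)\|_{p\to p}$, since there are infinitely many scales and the per-scale bound does not decay in $\ell$; and ``almost-orthogonality'' by itself does not close this for $r\neq 2$. The route taken in COSY and \cite{DOS} is a genuine Calder\'on--Zygmund argument yielding a weak-type $(p,p)$ bound, followed by interpolation with $L^2$; this is also why part~(i) gives $L^r$ boundedness only on the open interval $p<r<p'$. The second gap concerns your displayed estimate itself: after Fourier truncation, $F^k_\ell$ is no longer compactly supported in $\lambda$, whereas \eqref{st} requires $\supp F\subseteq[0,R]$. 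The repair --- used repeatedly in the present paper, cf.\ the cutoffs $\psi_{\ell,\delta}$ of \eqref{psi} and the resulting split into $I(j,k)$ and $I\!I(j,k)$ in Lemma~\ref{le4.1} --- is a further partition in $\lambda$ exploiting the rapid decay of $F^k_\ell$ away from $\supp F_\ell$.

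Finally, part~(ii) cannot be deduced from part~(i) as you propose, because (ii) asserts the bound at the endpoint $r=p$, which (i) explicitly excludes. The endpoint is available precisely because $(1-\lambda^2/R^2)_+^\alpha$ is compactly supported: the $\ell$-decomposition collapses to a single scale $R$, the Calder\'on--Zygmund-and-interpolate step becomes unnecessary, and summing over $k$ directly on $L^p$ yields the uniform $L^p\to L^p$ bound. Your Sobolev count for $(1-\lambda^2)_+^\alpha$ (namely $\beta<\alpha+1/q$) and the scaling argument for uniformity in $R$ are correct.
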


Finally, we state a standard  weighted inequality for the  Littlewood-Paley square function, which we shall use in what follows. For its proof, we refer the reader to
\cite{CD, DSY} for $p=1$, and  \cite{AM} for the general  $1\leq p<2$ on the Euclidean space  $\mathbb R^n$.
The estimate remains valid on
 spaces of homogeneous type.

\begin{prop}\label{prop2.5}
  Assume that   $L$ satisfies the  property {\rm (FS)}
and the condition  \eqref{EVp} for some
$1\leq p<2$.
 Let $\psi $  be  a function in $
{\mathscr S} ({\mathbb{R}})$ such that $\psi(0)=0$, and let the
quadratic functional be defined by
\begin{eqnarray*}
{\mathcal G}_L(f)(x)=\Big( \sum_{j\in{\mathbb Z}}  |\psi(2^j\SL\,\,)f(x)  |^2\Big)^{1/2}
\end{eqnarray*}
  for $f\in L^2(X)$.
Then for any $w\in A_1$ (i.e., the Muckenhoup $A_1$ weight), ${\mathcal G}_L$ is bounded on $L^{r}(w, X)$ for all $p<r<p'.$
\end{prop}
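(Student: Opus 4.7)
The plan is to adapt the Auscher--Martell vector-valued Calder\'on--Zygmund scheme \cite{AM} to the doubling setting $(X,d,\mu)$. The $L^2$-boundedness of $\mathcal{G}_L$ is immediate from spectral calculus: since $\psi\in\mathscr{S}(\mathbb{R})$ and $\psi(0)=0$, a Schwartz estimate gives $|\psi(t)|\le C\min(|t|,|t|^{-N})$, so the multiplier $\Psi(\la):=\sum_{j\in\ZZ}|\psi(2^j\la)|^2$ is uniformly bounded on $(0,\infty)$, whence
$$\|\mathcal{G}_L f\|_2^2 \;=\; \int_0^\infty \Psi(\la)\, d\langle E_{\sqrt L}(\la)f, f\rangle \;\le\; \|\Psi\|_\infty\|f\|_2^2.$$

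The core step is to establish, for every ball $B\subset X$ of radius $r_B$, a sufficiently large integer $M$, and with $A_B := (I-e^{-r_B^2 L})^M$, two local $L^2$-mean inequalities on $B$: one bounds the $L^2$-average of $\mathcal{G}_L((I-A_B)f)$ on $B$ by $C\inf_{y\in B}\mathfrak{M}_p f(y)$, and the other bounds the $L^2$-average of $\mathcal{G}_L(A_B f)$ on $B$ by $C\inf_{y\in B}(\mathfrak{M}(|\mathcal{G}_L f|^2)(y))^{1/2}$. The first exploits the fact that $I-A_B$ is a finite linear combination of heat semigroups with Gaussian off-diagonal decay at scale $r_B$; combined with the off-diagonal $L^p\to L^2$ estimates of Lemma~\ref{le2.0}(iv), the Schwartz decay of $\psi$, Lemma~\ref{le2.2}, and the doubling estimate \eqref{eq2.2}, one sums off-diagonal contributions over $j\in\ZZ$ and over dyadic annuli around $B$ to obtain the required $\mathfrak{M}_p$-control. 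The second uses the vanishing of the symbol $(1-e^{-r_B^2\la})^M$ of $A_B$ to order $M$ at $\la=0$, together with $L^2$-boundedness of $\mathcal{G}_L$ and the triangle inequality in $\ell^2$, to recover $\mathcal{G}_L f$ up to averaging over a fixed dilate of $B$.

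Once these local estimates are in place, the standard good-$\la$ / sharp maximal function machinery on doubling metric spaces (see \cite{Au, AM}) yields
$$\|\mathcal{G}_L f\|_{L^r(w)} \;\le\; C\|\mathfrak{M}_p f\|_{L^r(w)},$$
which gives $L^r(w)$-boundedness of $\mathcal{G}_L$ for $p<r\le 2$ and $w\in A_1$, since $A_1\subset A_{r/p}$ whenever $r>p$ and hence $\mathfrak{M}_p$ is bounded on $L^r(w)$. The range $2\le r<p'$ is obtained by standard vector-valued duality for the square function, using self-adjointness of $L$ to identify the adjoint with an operator of the form $(g_j)_j\mapsto\sum_j\psi(2^j\sqrt L)g_j$, and exploiting the reverse H\"older property of $A_1$ weights, which places the dual weight in the appropriate class so that the same scheme applies to the adjoint.

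I expect the main obstacle to be the off-diagonal summation producing the first local estimate: one must simultaneously exploit the Gaussian decay from $I-A_B$ at scale $r_B$, the Schwartz decay of $\psi$ for large $j$, the vanishing $\psi(0)=0$ for small $j$, and the polynomial annular volume growth from \eqref{eq2.2}, uniformly in $B$. This is the technical heart of \cite{AM}; its transcription to the homogeneous setting relies only on the doubling property, the finite speed of propagation, and the equivalences of Lemma~\ref{le2.0}, and is routine but lengthy.
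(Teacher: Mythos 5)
The paper does not prove Proposition \ref{prop2.5}: it refers the reader to \cite{CD,DSY} for $p=1$ and to \cite{AM} for general $1\le p<2$, and simply asserts that the Euclidean argument extends to spaces of homogeneous type. Your proposal is therefore an attempt to fill in exactly the Auscher--Martell scheme that the paper points at, and most of the ingredients you list (finite speed of propagation, the equivalences of Lemma~\ref{le2.0}, doubling, $L^2$-boundedness of $\mathcal{G}_L$ via the spectral theorem, good-$\la$ on a doubling space) are the right ones.

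However, your two local estimates are \emph{swapped}, and as written the first one is false. With $A_B=(I-e^{-r_B^2L})^M$, the operator $I-A_B=I-(I-e^{-r_B^2L})^M$ is the approximation to the identity, while $A_B$ carries the order-$M$ vanishing at $\la=0$. The off-diagonal summation that you describe (over $j\in\ZZ$ and over annuli around $B$) requires the \emph{vanishing} factor to tame the low-frequency pieces of $\mathcal{G}_L$: for $2^j\gg r_B$ the symbol of $\psi(2^j\sqrt L)(I-A_B)$ is essentially $\psi(2^j\la)$ alone, and one can check that $\fint_B|\psi(2^j\sqrt L)(I-A_B)f|^2\gtrsim (2^j/r_B)^n(\inf_B\mathfrak M_p f)^2$ in general, so the $\sum_j$ diverges and no $\mathfrak M_p$-control results. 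What the argument \emph{can} give is $\fint_B|\mathcal{G}_L(A_B f)|^2\lesssim(\inf_B\mathfrak M_p f)^2$, because the factor $(1-e^{-r_B^2\la^2})^M\sim(r_B\la)^{2M}$ supplies the missing $(r_B/2^j)^{4M}$ for large $j$, and conversely $\fint_B|\mathcal{G}_L((I-A_B)f)|^2\lesssim\inf_B\mathfrak M(|\mathcal{G}_L f|^2)$, using commutativity and the Gaussian off-diagonal decay of the heat semigroups (note that the second bound does \emph{not} actually use the vanishing of the symbol, contrary to your justification). Thus the pairing of pieces with maximal functions should be reversed to match \cite{AM}.

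Two smaller issues. First, both of your local averages are $L^2$-averages; this only feeds the good-$\la$/sharp-maximal machinery up to $r\le 2$. To reach $2<r<p'$ one needs the ``good part'' to satisfy an $L^{q_0}$-average bound with $q_0$ up to $p'$, which comes from the dual $L^2\to L^{p'}$ off-diagonal estimates, and \cite{AM} handles this range directly without duality. Second, the duality route you sketch for $2\le r<p'$ is delicate when $p>1$: the dual weight $\sigma=w^{1-r'}$ associated to $w\in A_1$ and $r'\in(p,2)$ only lands in $A_{r'}$, whereas the adjoint-side scheme would want $\sigma\in A_{r'/p}$ (a strictly smaller class for $p>1$); the reverse H\"older property of $A_1$ does not automatically close that gap, so this step needs to be justified rather than asserted.
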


\section{ Plancherel estimate and  maximal Bochner-Riesz operator }
\setcounter{equation}{0}

In this section we will discuss the case $p=1$ for the condition \eqref{st}.  In Corollary  \ref{cor3.3} and
Proposition \ref{prop3.2} below we state a version of Theorem A which deals with the case $p=1$.
 In this case the proofs of results are significantly simpler.
We also describe  some other  observations
which will be useful for results in full generality. Following \cite{DOS}, we will call  the estimate  ${\red{ \color{red} \rm{ (ST^q_{1, 2})}}}$
the Plancherel estimate.

Assume that $(X, d, \mu)$ satisfies the doubling condition \eqref{eq2.2}.
We  start  with the following    lemma.

\begin{lemma}\label {le3.0}
Let    $L$  satisfy the Gaussian bound ${\rm (GE)}$ and let  $m$ be a bounded Borel function such
that $\supp m\subseteq [-2, 2]$.  If   $\|m\|_{W_{s}^{2}}<\infty$ for some $s>n +1/2$,   for all $x\in X,$
$$
\sup_{t>0}\left| m (tL)f(x)\right| \leq C\|m\|_{W_{ s}^{2}}  {\mathfrak M}(f)(x).
$$

As a consequence, if  $\alpha >n$,  then   $S_{\ast}^{ \alpha}(L)$ is a bounded operator on  $L^p(X)$ for all $ 1<p<\infty.$
 \end{lemma}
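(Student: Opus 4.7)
The plan is to derive a pointwise off-diagonal kernel estimate of the form $|K_{m(tL)}(x,y)|\leq C\|m\|_{W_s^2}V(x,\sqrt t)^{-1}(1+d(x,y)/\sqrt t)^{-N}$ with some $N>n$, to sum over dyadic annuli centered at $x$ to dominate $|m(tL)f(x)|$ pointwise by $\mathfrak M f(x)$, and finally to deduce the bound on $S^\alpha_\ast(L)$ by choosing $m(\lambda)=(1-\lambda)_+^\alpha\phi(\lambda)$ with $\phi$ a smooth cutoff and invoking the $L^p$-boundedness of $\mathfrak M$ on the space of homogeneous type $(X,d,\mu)$.

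The kernel bound itself proceeds in two stages. Since \eqref{ge} is equivalent to \eqref{FS} in this setting, one first establishes the Duong--Ouhabaz--Sikora-type weighted Plancherel estimate
\[
\int_X|K_{m(tL)}(x,y)|^2\bigl(1+d(x,y)/\sqrt t\bigr)^{2\sigma}V(x,\sqrt t)\,d\mu(y)\leq C\|m\|_{W_\sigma^2}^2,\quad 0\leq\sigma\leq s,
\]
by representing $m(tL)=\frac{1}{\pi}\int_0^\infty\hat F(\xi)\cos(\xi\sqrt{tL})\,d\xi$ with $F(\lambda)=m(\lambda^2)$, exploiting via Lemma~\ref{le2.2} that the wave kernel is supported in $\{d(x,y)\leq\xi\sqrt t\}$, and integrating by parts $\sigma$ times in $\xi$. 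To promote this to a pointwise estimate, factor $m(tL)=G(tL)\,e^{-tL/2}$ where $G(\lambda):=m(\lambda)e^{\lambda/2}$ has comparable $W_s^2$-norm since $\supp m\subseteq[-2,2]$; writing the kernel as $\int K_{G(tL)}(x,z)\,p_{t/2}(z,y)\,d\mu(z)$ and applying Cauchy--Schwarz together with the Gaussian decay of $p_{t/2}$ and the quasi-triangle inequality $(1+d(x,y)/\sqrt t)\leq(1+d(x,z)/\sqrt t)(1+d(z,y)/\sqrt t)$ yields the pointwise bound with $N=s-1/2>n$ under the hypothesis $s>n+1/2$. A dyadic annular decomposition $A_k(x)=\{y:2^{k-1}\sqrt t\leq d(x,y)<2^k\sqrt t\}$ combined with the doubling condition \eqref{eq2.2} then gives $|m(tL)f(x)|\leq C\|m\|_{W_s^2}\,\mathfrak M f(x)\sum_{k\geq 0}2^{k(n-N)}$, a convergent geometric series, uniformly in $t>0$.

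For the consequence, fix $\alpha>n$, choose $s\in(n+1/2,\alpha+1/2)$, and let $\phi\in C_c^\infty([-2,2])$ equal $1$ on $[-1,1]$. The function $m(\lambda):=(1-\lambda)_+^\alpha\phi(\lambda)$ is supported in $[-2,2]$ and satisfies $\|m\|_{W_s^2}<\infty$, because the boundary singularity of $(1-\lambda)_+^\alpha$ at $\lambda=1$ carries $\alpha+1/2>s$ derivatives in $L^2$. Since $\phi\equiv 1$ on the part of the spectrum where $(1-L/R^2)_+^\alpha$ does not vanish, $S^\alpha_R(L)=m(tL)$ with $t=1/R^2$; applying the first assertion yields $S^\alpha_\ast(L)f(x)\leq C\mathfrak M f(x)$, and the Hardy--Littlewood maximal theorem for spaces of homogeneous type concludes the proof. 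The main technical difficulty is the sharp half-derivative accounting in the pointwise kernel bound: a naive Sobolev embedding $W_s^2(\mathbb R^n)\hookrightarrow L^\infty(\mathbb R^n)$ would require $s>3n/2$, so one must exploit the one-dimensional nature of the spectral variable combined with the heat-kernel smoothing to lose only a single half-derivative.
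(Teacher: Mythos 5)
Your overall skeleton—a pointwise off-diagonal kernel bound, then a dyadic annular decomposition, then $\mathfrak M$—is the right kind of strategy, and your treatment of the consequence (choosing $m(\lambda)=(1-\lambda)_+^\alpha\phi(\lambda)$ and tracking its $W^2_s$ regularity) is fine. But the pointwise kernel estimate you claim does not come out of the two-step weighted-Plancherel$\,+\,$heat-kernel argument. After Cauchy--Schwarz against $p_{t/2}$, the weighted Plancherel gives
$$
|K_{m(tL)}(x,y)|\ \lesssim\ \|m\|_{W^2_s}\,V(x,\sqrt t)^{-1/2}\,V(y,\sqrt t)^{-1/2}\,\bigl(1+d(x,y)/\sqrt t\bigr)^{-\sigma},\qquad \sigma<s-\tfrac12,
$$
and \emph{not} the asymmetric form $V(x,\sqrt t)^{-1}(1+d/\sqrt t)^{-N}$ that your annular sum requires. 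Section 3 only assumes the doubling condition \eqref{eq2.2}, not the Ahlfors regularity \eqref{eq1.1}; under doubling one only has $V(y,\sqrt t)^{-1/2}\lesssim (1+d(x,y)/\sqrt t)^{n/2}V(x,\sqrt t)^{-1/2}$. Substituting this into your dyadic decomposition, the geometric series is actually $\sum_{k\ge 0}2^{k(3n/2-\sigma)}$, which converges only for $\sigma>3n/2$, i.e.\ $s>3n/2+1/2$ — strictly worse than the claimed $s>n+1/2$. Your approach does prove the lemma under \eqref{eq1.1}, or it yields a bound by $\mathfrak M_2 f$ (as in Proposition~\ref{prop3.2}) if you skip the pointwise step and work in $L^2$ on each annulus, but then one only gets boundedness for $p>2$, not $1<p<\infty$.

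The paper closes the gap by taking a different route that never passes through a polynomially-weighted kernel estimate. It writes $m(t\sqrt L)=H(t^2L)e^{-t^2L}$ with $H(\lambda)=m(\sqrt\lambda)e^{\lambda}$, represents this as the Fourier superposition $\tfrac{1}{2\pi}\int\hat H(\tau)\,e^{-t^2(1-i\tau)L}\,d\tau$, and invokes the Gaussian bound for the \emph{complex-time} heat kernel $p_z(x,y)$ (Ouhabaz). The point is that $p_z$ still decays as $\exp(-cd^2(x,y)\cos\theta/|z|)$, so the volume discrepancy $(V(x)/V(y))^{1/2}\lesssim(1+d/\sqrt{|z|})^{n/2}$ is absorbed into the Gaussian rather than eaten out of the polynomial weight; this gives $|e^{-zL}f(x)|\leq C(1+\tau^2)^{n/2}\mathfrak M f(x)$ pointwise, and integrating against $\hat H(\tau)$ costs exactly $n+1/2+\varepsilon$ derivatives. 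That exponential off-diagonal decay is precisely what your weighted Plancherel lacks, and it is what makes the sharp derivative count $s>n+1/2$ possible on a general doubling space.
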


\begin{proof}
Let  $H(t):=m(\sqrt{t}) e^{t}$. By  the Fourier inversion formula
$
H (t) = \frac{1}{2\pi}\int_{-\infty}^{+\infty}\widehat{H }(\tau)
e^{it\tau}d\tau,
$
we have
\begin{eqnarray}\label{e3.02}
 m(t\sqrt{L}\,) =H(t^2L)e^{-t^2L}
=  \frac{1}{2\pi}\int_{-\infty}^{+\infty}\hat{H}(\tau)e^{- t^2(1-i\tau)L}d\tau.
\end{eqnarray}
Let  $z:=t^2(1-i\tau)$ and $\theta={\rm arg } z$.
From ${\rm (GE)}$, it is well known   (see \cite[Theorem 7.2]{Ou}) that
  there exist  positive constants $C, c$ such that for all $z\in{\mathbb C}^+$ and a.e. $x,y\in X,$
\begin{eqnarray}\label{e3.03}
\big|p_z(x,y)\big| &\leq& {C\left(\cos\theta\right)^{-n}\over \sqrt{ V\left(x, \sqrt{|z|\over \cos\theta} \right)
V\left(y, \sqrt{|z|\over \cos\theta} \right) }} \exp\left(-c {  d^2(x,y)\over   |z| }  \cos\theta\right).
\end{eqnarray}
By the  doubling  properties  of the space $X$, we use a standard argument to obtain
\begin{eqnarray*}
\big|e^{-zL} f(x)\big| &\leq& C (1+\tau^{2})^{n/2}\int_X    {1\over V\left(x, \sqrt{|z|\over \cos\theta} \right)}
\exp\Big(-{  {d(x,y)^2\over  2c\, |z| } \cos\theta}\Big)  |f(y)|d\mu(y)\nonumber\\
&\leq&
C(1+\tau^{2})^{n/2}{  \mathfrak M} f(x).
\end{eqnarray*}
Then, from this and \eqref{e3.02} it follows that,  for any  $\varepsilon\in(0,1)$,
\begin{eqnarray}\label{e3.04}
 |m(t\sqrt{L}\,)f(x)|
&=&\bigg|\frac{1}{2\pi}\int_{\mathbb{R}}e^{- zL}f(x)\hat{H}(\tau)d\tau\bigg|\nonumber
\le C {\mathfrak M}(f)(x)\int_{\mathbb{R}}|\hat{H}(\tau)|(1+\tau^{2})^{n/2}d\tau  \nonumber\\
&\leq&C\|H\|_{W_{ (2n+1)/2+\varepsilon}^{2} } {\mathfrak M}(f)(x)
\le C\|m\|_{W_{ (2n+1)/2+\varepsilon}^{2} }  {\mathfrak M}(f)(x).
\end{eqnarray}
\noindent
Because $\|H\|_{W_{ (2n+1)/2+\varepsilon}^{2} }
\leq C\|m\|_{W_{ (2n+1)/2+\varepsilon}^{2} }$ since $ \supp m\subset[-2,2]$.  This gives the desired inequality.

Finally, we notice that     $(1-t^2)_+^{\alpha}\in W^2_{s} $ if
and only if $  \alpha>s- 1/2$.
From (\ref{e3.04}), $L^p$-boundedness of  the Hardy-Littlewood maximal operator $\mathfrak M$,
  we see that for $\alpha >n$,    $S_{\ast}^{ \alpha}(L)$ is a bounded operator on  $L^p(X)$ for $ 1<p<\infty.$
\end{proof}

\newcommand{\mel}{{\mathlarger{ \mathfrak m}}}

In Lemma~\ref{le3.0} the order of $\alpha$ for which $S_{\ast}^{\alpha}(L)$ is bounded on  $L^p(X)$ for $ 1<p<\infty$
 is relatively large. This is mainly  because the maximal bound is obtained by the pointwise estimate. The bound can
 be improved by  making use of the spectral theory.  For this, let us first  recall that the Mellin transform of the
 function $F \colon \,   {\mathbb R}\to  {\mathbb C}$ is defined by
\begin{eqnarray}\label{e3.2}
\mel_F(u)=\frac{1}{2\pi}\int_0^\infty
F(\lambda)\lambda^{-1-iu}d\lambda,\quad u\in{\mathbb R}.
\end{eqnarray}
Moreover the inverse transform is given by the following formula
\begin{eqnarray}\label{e3.3}
F(\lambda)=\int_{\mathbb R} \mel_F(u)\lambda^{iu}du,\quad \lambda\in[0,\infty).
\end{eqnarray}

\begin{lemma}\label {le3.1}
Suppose that   $L$ satisfies  the
  property ${\rm (FS)}$ and the condition \eqref{EVp} for some
  $1\leq p< 2$.
Let $p<r<p'$ and $s>n|{1/p}-{1/2}|.$
Suppose also that $F \colon \,   {\mathbb R}\to  {\mathbb C}$ is a
bounded Borel
function    such that
$$
   \int_{\mathbb
R}|\mel_F(u)|(1+|u|)^{s}du  =C_{F,s} < \infty.
$$
Then  the maximal operator
\begin{eqnarray}\label{e3.4}
F^*(L)f(x)=\sup_{t>0}|F(tL)f(x)|
\end{eqnarray}
  is a
bounded operator  on $L^r(X)$  with
$
\|F^*(L)\|_{r \to r}
\le C C_{F,s}.
$
In particular, if $\supp F\subseteq [-2, 2]$ and   $\|F\|_{W_{s}^{2}}<\infty$ for some
$s>n|{1/ p}-{1/2}| +1/2$,  then $F^*(L)$ is bounded on  $L^r(X)$ for $ p<r<p'$. As a consequence, if
$\alpha >n|{1/ p}-{1/2}|$,  then     $S_{\ast}^{\alpha}(L)$ is bounded
on  $L^r(X)$ for $p<r<p'$.
 \end{lemma}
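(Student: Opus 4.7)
The plan is to reduce the maximal inequality for $\sup_{t>0}|F(tL)f|$ to a polynomial-in-$u$ upper bound on the operator norms $\|L^{iu}\|_{r\to r}$, via the Mellin inversion formula \eqref{e3.3}. The bound on imaginary powers will then follow from the spectral multiplier statement in Proposition~\ref{prop2.4} applied to the oscillating function $\lambda\mapsto\lambda^{iu}$.

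First I would insert \eqref{e3.3} into the spectral calculus to write
$F(tL)=\int_{\mathbb R}\mel_F(u)(tL)^{iu}\,du=\int_{\mathbb R}t^{iu}\mel_F(u)L^{iu}\,du$;
since $|t^{iu}|=1$, taking absolute values and then the supremum over $t>0$ gives the pointwise bound
\begin{equation*}
F^{\ast}(L)f(x) \;\le\; \int_{\mathbb R}|\mel_F(u)|\,|L^{iu}f(x)|\,du.
\end{equation*}
Minkowski's integral inequality then yields
\begin{equation*}
\|F^{\ast}(L)f\|_r \;\le\; \int_{\mathbb R}|\mel_F(u)|\,\|L^{iu}f\|_r\,du.
\end{equation*}

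To bound $\|L^{iu}\|_{r\to r}$ I would invoke Proposition~\ref{prop2.4}(i): by Lemma~\ref{le2.4}, condition \eqref{EVp} is equivalent to ${\rm(ST^{\infty}_{p,2})}$, so the proposition applies with $q=\infty$. For $G_u(\lambda)=\lambda^{iu}$ one has $\delta_tG_u(\lambda)=t^{iu}\lambda^{iu}$, and differentiating on the compact support of $\eta$ contributes at most factors of $|u|$, so $\sup_{t>0}\|\eta\,\delta_tG_u\|_{W^{s,\infty}}\le C(1+|u|)^s$ for any $s>n(1/p-1/2)=n|1/p-1/2|$. This delivers $\|L^{iu}\|_{r\to r}\le C(1+|u|)^s$ for $p<r<p'$, and combining with the Minkowski estimate completes the main assertion.

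For the ``in particular'' consequence, suppose $\supp F\subseteq[-2,2]$ and $\|F\|_{W_s^2}<\infty$ with $s>n|1/p-1/2|+1/2$. I would decompose $F=F(0)\phi+\widetilde F$ with $\phi\in C_c^\infty([0,1])$, $\phi(0)=1$; the term $F(0)\phi(tL)$ is controlled pointwise by the Hardy--Littlewood maximal function via Lemma~\ref{le3.0}, while $\widetilde F=F-F(0)\phi$ vanishes at $0$, so the substitution $\lambda=e^\mu$ identifies $\mel_{\widetilde F}$ with the Fourier transform of $\mu\mapsto\widetilde F(e^\mu)$. Plancherel together with Cauchy--Schwarz then yields $\int|\mel_{\widetilde F}(u)|(1+|u|)^{s-1/2}\,du\le C\|F\|_{W_s^2}$, so the first part applies with exponent $s-1/2>n|1/p-1/2|$. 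The Bochner--Riesz consequence follows since $S_R^\alpha(L)=F_\alpha(R^{-2}L)$ with $F_\alpha(\lambda)=(1-\lambda)_+^\alpha\in W_s^2$ for every $s<\alpha+1/2$, and $\alpha>n|1/p-1/2|$ allows a choice of $s$ with $n|1/p-1/2|+1/2<s<\alpha+1/2$. I expect the subtle point of the argument to be the justification of the Mellin decomposition near $\lambda=0$ when $F(0)\ne 0$; this is precisely why the constant term $F(0)\phi$ must be peeled off and handled by the cruder Lemma~\ref{le3.0}.
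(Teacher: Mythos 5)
Your first half — inserting the Mellin inversion formula into the spectral calculus, taking the pointwise supremum, applying Minkowski, and then bounding $\|L^{iu}\|_{r\to r}$ by $C(1+|u|)^s$ via Proposition~\ref{prop2.4} with $q=\infty$ (after identifying \eqref{EVp} with ${\rm(ST^\infty_{p,2})}$ by Lemma~\ref{le2.4}) — is exactly the paper's argument, and your explanation of why $\sup_{t>0}\|\eta\,\delta_t(\lambda\mapsto\lambda^{iu})\|_{W^{s,\infty}}\le C(1+|u|)^s$ is correct and in fact a bit more explicit than what the paper writes.

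Your treatment of the ``in particular'' clause genuinely diverges from the paper, and you have put your finger on a real subtlety. The paper plugs $F(t)=(1-t^2)_+^\alpha$ directly into the Mellin machinery and invokes the change of variables $\lambda=e^\nu$, claiming $\|F(e^\cdot)\|_{W^2_{s+1/2+\varepsilon}}\lesssim\|F\|_{W^2_{s+1/2+\varepsilon}}$; but since $F(0)=1\ne 0$, the function $G(\nu)=F(e^\nu)$ tends to $1$ as $\nu\to-\infty$, so $G\notin L^2(\RR)$, the Mellin transform develops a non-integrable $1/u$ singularity at $u=0$, and the hypothesis $\int|\mel_F(u)|(1+|u|)^s\,du<\infty$ fails outright. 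You are right that the constant at the origin must be peeled off before the Mellin argument can run.

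However, your repair has a gap of its own. You dispose of the $F(0)\phi(tL)$ piece by Lemma~\ref{le3.0}, but that lemma requires the Gaussian bound ${\rm(GE)}$. Under the hypotheses of Lemma~\ref{le3.1} you only have \eqref{FS} and \eqref{EVp}; these imply ${\rm(GE)}$ exactly when $p=1$, and the paper itself emphasizes (end of Section~2.1) that for $p>1$ there are operators satisfying \eqref{FS} and \eqref{EVp} but not ${\rm(GE)}$. So your decomposition closes the paper's gap only in the $p=1$ case; for $1<p<2$, the low-frequency piece $\sup_t|\phi(tL)f|$ still needs an argument that uses only \eqref{FS} and \eqref{EVp} (for instance, writing $\phi(\lambda)=e^{-\lambda}+(\phi(\lambda)-e^{-\lambda})$, applying the Mellin argument to the difference, and finding a separate maximal bound for $\sup_t|e^{-tL}f|$ on $L^r$ under those weaker hypotheses), and that step is not supplied. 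In short: your first half matches the paper, your second half correctly diagnoses a flaw in the paper's own proof, but the surgery you propose invokes a tool whose hypotheses are not available in the generality claimed.
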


\begin{proof}
By \eqref{e3.2}, \eqref{e3.3} it follows
 that
\begin{eqnarray*}
F(tL)&=&\int_0^{\infty}F(t\lambda) d E_L(\lambda)
=\int_0^{\infty} \int_{\mathbb R} \mel_F(u)(t\lambda)^{iu}du \,  d E_L(\lambda)\\
&=&\int_{\mathbb R}\int_0^{\infty} \mel_F(u)(t\lambda)^{iu}  d E_L(\lambda) du=
\int_{\mathbb R} \mel_F(u) t^{iu}L^{iu}du.
\end{eqnarray*}
Hence
$
F^*(L)f(x)=\sup_{t>0}|F(tL)f(x)| \le  \int_{\mathbb R} |\mel_F(u)|   |L^{iu}f(x)|  du.
$
And we get
\begin{eqnarray}\label{e3.5}
\|F^*(L)f\|_{r}\leq  C \|f\|_r \int_{\mathbb R} |\mel_F(u)|   \|L^{iu}\|_{r\to r}  du.
\end{eqnarray}

From  Proposition~\ref{prop2.4}, we have that
  the imaginary power  $L^{iu}$
of $L$ is bounded  on $L^r(X), p<r<p'$  with the bound
$
\|L^{iu}\|_{ r \to  r} \le C (1+|u|)^{s}
$
for any $s>n|{1/p}-{1/2}|$. This, together  with \eqref{e3.5}, gives
\begin{eqnarray}\label{e3.7}
\|F^*(L)f\|_{r}\leq  C \|f\|_r \int_{\mathbb R} |\mel_F(u)|   (1+|u|)^{s} du.
\end{eqnarray}

Set $F(t)=(1-t^2)_+^{\alpha}.$
 Substituting  $\lambda=e^\nu$ in \eqref{e3.2}, we notice  that $m$ is the Fourier transform of $G(\nu)=F(e^{\nu})$.
Since  $(1-t^2)_+^{\alpha}$ is compactly supported in $[-1, 1]$, we get
\begin{eqnarray}\label{e3.8}
\int_{\mathbb R} |\mel_F(u)|   (1+|u|)^{s} du\leq
C \|G\|_{W^2_{s+1/2+\varepsilon} }\leq C\|F\|_{W^2_{s+1/2+\varepsilon} }
\end{eqnarray}
for any $\varepsilon>0$.
On the other hand,  $(1-t^2)_+^{\alpha}\in W^2_{s+1/2+\varepsilon} $ if
and only if $ \alpha>s+\varepsilon$.   From this, we know that
if  $\alpha>n|{1/p}-{1/2}|$,    then  $S_{\ast}^{\alpha}(L)$ is a bounded operator on  $L^r(X)$ for $ p<r<p'.$
\end{proof}

As a consequence of Lemma~\ref{le3.1},  we have the following which gives essentially sharp $L^2$ maximal bound for the Bochner-Riesz means.

\begin{cor}\label{cor22}
Suppose that   $L$ satisfies  the
  property ${\rm (FS)}$ and the condition  \eqref{EVp} for some
  $1\leq p< 2$.
  If  $\alpha >0$,  then     $S_{\ast}^{ \alpha}(L)$ is   bounded
 on  $L^2(X)$.
\end{cor}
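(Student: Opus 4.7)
The plan is to specialize the proof of Lemma~\ref{le3.1} to the case $r=2$, where a substantial gain is possible: by the spectral theorem the imaginary powers $L^{iu}$ satisfy $\|L^{iu}\|_{2\to 2}\le 1$ for every $u\in\RR$, with no dependence on $u$ whatsoever. This removes the weight $(1+|u|)^s$ appearing in \eqref{e3.7} and allows us to push the threshold on $\alpha$ down to $0^+$ rather than $n|1/p-1/2|$.

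First I would mimic the Mellin representation used in the proof of Lemma~\ref{le3.1}. With $F(t)=(1-t^2)_+^{\alpha}$, spectral calculus gives
\[
F(t\SL\,)=\int_{\RR}\mel_F(u)\,t^{iu}\,L^{iu/2}\,du,
\]
so taking a supremum in $t>0$ followed by Minkowski in $L^2$,
\[
\|S_{\ast}^{\alpha}(L)f\|_2\le \int_{\RR}|\mel_F(u)|\,\|L^{iu/2}f\|_2\,du\le \Big(\int_{\RR}|\mel_F(u)|\,du\Big)\|f\|_2,
\]
where in the last step I use that $\lambda\mapsto\lambda^{iu/2}$ has modulus one on $\sigma(L)\subseteq[0,\infty)$, hence $\|L^{iu/2}\|_{2\to 2}\le 1$ by the spectral theorem alone.

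Second, I would verify that $\int_{\RR}|\mel_F(u)|\,du<\infty$ whenever $\alpha>0$. As in the derivation of \eqref{e3.8}, the change of variable $\lambda=e^{\nu}$ turns $\mel_F$ (up to a constant) into the Fourier transform of $G(\nu)=F(e^{\nu})$. Pairing Cauchy--Schwarz against the weight $(1+|u|^2)^{-1/2-\varepsilon/2}$ with Plancherel gives
\[
\int_{\RR}|\mel_F(u)|\,du\le C_{\varepsilon}\|G\|_{W^{2}_{1/2+\varepsilon}}\le C_{\varepsilon}\|F\|_{W^{2}_{1/2+\varepsilon}}.
\]
Since $(1-t^2)_+^{\alpha}\in W^{2}_{1/2+\varepsilon}$ if and only if $\alpha>\varepsilon$, picking any $\varepsilon\in(0,\alpha)$ closes the argument.

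The hard part is essentially nonexistent; the one conceptual observation is that on $L^2$ the imaginary-power bound is free from the spectral theorem, so the conditions \eqref{FS} and \eqref{EVp}---which were invoked in Lemma~\ref{le3.1} only through Proposition~\ref{prop2.4} to control $\|L^{iu}\|_{r\to r}$ for $r\ne 2$---play no role in this $L^2$ estimate and the sharp range $\alpha>0$ drops out.
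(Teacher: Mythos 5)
Your proposal is the paper's implicit derivation: Corollary~\ref{cor22} does not follow by invoking the \emph{statement} of Lemma~\ref{le3.1} at $r=2$ (that would only give $\alpha>n|1/p-1/2|$), and the extra mileage comes, as you observe, from replacing the imaginary-power bound of Proposition~\ref{prop2.4} by the trivial spectral-theoretic bound $\|L^{iu}\|_{2\to 2}\le 1$, so that the weight $(1+|u|)^{s}$ in \eqref{e3.7} disappears and only $\int_{\RR}|\mel_F(u)|\,du<\infty$ is needed. That is the whole content of the corollary, and you have identified it.

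One step deserves more care, though it is inherited verbatim from the paper's own \eqref{e3.8} rather than introduced by you. With $F(t)=(1-t^2)_+^{\alpha}$ one has $F(0)=1\neq 0$, so $G(\nu)=F(e^{\nu})\to 1$ as $\nu\to-\infty$; hence $G\notin L^2(\RR)$, so $\|G\|_{W^{2}_{1/2+\varepsilon}}=+\infty$, and correspondingly $\mel_F$ has a $1/u$-type singularity at $u=0$ (indeed $\mel_F(u)=\tfrac{1}{4\pi}\Gamma(\alpha+1)\Gamma(-iu/2)/\Gamma(\alpha+1-iu/2)$, with a simple pole at $u=0$). Thus $\int_{\RR}|\mel_F(u)|\,du$ actually diverges, and the chain $\int|\mel_F|\,du\le C\|G\|_{W^2_{1/2+\varepsilon}}\le C\|F\|_{W^2_{1/2+\varepsilon}}$ is not correct as written: the middle quantity is infinite, and the last inequality fails since $F\in W^2_{1/2+\varepsilon}$ while $G\notin W^2_{1/2+\varepsilon}$. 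The standard repair --- which the paper carries out explicitly only in the proof of Corollary~\ref{cor3.3} --- is to write $F=\psi+(F-\psi)$ with $\psi$ smooth, compactly supported, $\psi(0)=1$: then $(F-\psi)(0)=0$, the transplant $\nu\mapsto (F-\psi)(e^\nu)$ decays exponentially at $-\infty$, and the Mellin argument genuinely yields $\int|\mel_{F-\psi}(u)|\,du<\infty$ for every $\alpha>0$; the remaining low-frequency piece $\sup_{t>0}|\psi(tL)f|$ must then be bounded on $L^2$ by a separate argument. With this (shared) correction supplied, your route is the intended one.
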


In Lemma \ref{le3.0} we obtained the pointwise estimate for the maximal function under  the Gaussian bound ${\rm (GE)}$ only.
In what follows we additionally impose the condition ${\rm (ST^{q}_{1, 2})}$. This significantly improves the regularity
assumption on $F$ so that this allows us to essentially recover the sharp maximal bounds for the Bochner-Riesz means for
$p=\infty$,  see Corollary~\ref{cor3.3}. The proof of Proposition \ref{prop3.2}   was inspired  by an argument of Thangavelu \cite[Theorem 4.2]{Th1}.

\begin{prop}\label {prop3.2} Let  $q\in [2, \infty]$.
Let  $L$ satisfy the Gaussian bound ${\rm (GE)}$ and let $F$ be a Borel function such that $\supp F\subseteq [1/4, 1]$ and
  $\|F\|_{W_{s}^{q} }<\infty$ for some $s>n/2$.
Suppose that  the  condition  ${\rm (ST^{q}_{1, 2})}$ holds,  then we have, for each $2\leq r<\infty$,
\begin{equation*}
 F^{\ast} (L)f(x)\leq C   \|F\|_{W^q_{s} }  \mathfrak M_r f(x).
\end{equation*}
Hence, $F^*(L)$ is a bounded operator on  $L^p(X)$ for all $ 2<p<\infty$.
\end{prop}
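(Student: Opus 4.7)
The plan is to derive the pointwise domination $F^{\ast}(L)f(x)\leq C\|F\|_{W^q_s}\mathfrak{M}_2 f(x)$ for a.e.\ $x$. Once this is in hand, the claimed $L^p$-bound for $2<p<\infty$ follows from $\mathfrak{M}_2 f\leq \mathfrak{M}_r f$ (valid for $r\geq 2$) together with the $L^p$-boundedness of $\mathfrak{M}_r$ for $p>r$.

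Fix $t>0$, set $R=1/\sqrt{t}$, and extend $H(\mu):=F(\mu^2)$ to an even function on $\mathbb{R}$, so that $\supp H\subseteq[-1,-1/2]\cup[1/2,1]$ and $F(tL)=H(\sqrt{L}/R)$. Perform a smooth Littlewood--Paley decomposition $H=\sum_{j\geq 0}H_j$ with $\widehat{H_j}$ supported in $\{|\xi|\sim 2^j\}$ for $j\geq 1$ (and in $\{|\xi|\leq 2\}$ for $j=0$), and multiply by a smooth even cutoff $\chi\in C_c^\infty(\mathbb{R})$ with $\chi\equiv 1$ on $\supp H$, yielding $H=\sum_j\chi H_j$ with each $\chi H_j$ compactly supported on the spectral side. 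Correspondingly write $F(tL)=\sum_j T_{t,j}$ with $T_{t,j}:=(\chi H_j)(\sqrt{L}/R)$. The idea is that each $\chi H_j$ is admissible as input for ${\rm (ST^q_{1,2})}$, while the dyadic Fourier localisation of $H_j$, combined with (FS) (which follows from (GE)) and Lemma~\ref{le2.2}, essentially confines the kernel of $T_{t,j}$ to $\{d(x,y)\leq C 2^j\sqrt t\}$.

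Applying ${\rm (ST^q_{1,2})}$ to the compactly supported symbol $\chi H_j$ at scale $R$ with ball radius $r=\sqrt t$ gives $\|T_{t,j}\chi_{B(x,\sqrt t)}\|_{1\to 2}\leq CV(x,\sqrt t)^{-1/2}\|H_j\|_q$. The same bound holds for the adjoint (since $\overline{\chi H_j}$ has the same $L^q$-norm), and dualising yields $\|\chi_{B(x,\sqrt t)}T_{t,j}\|_{2\to\infty}\leq CV(x,\sqrt t)^{-1/2}\|H_j\|_q$. Combining this $L^2\to L^\infty$ bound with the kernel-support reduction $T_{t,j}f(z)=T_{t,j}(f\chi_{B(x,C2^j\sqrt t)})(z)$ for $z\in B(x,\sqrt t)$, applying H\"older, and using doubling $V(x,C 2^j\sqrt t)\leq C 2^{jn}V(x,\sqrt t)$, we obtain for a.e.\ $x$
\[
|T_{t,j}f(x)|\leq C V(x,\sqrt t)^{-1/2}\|H_j\|_q\,V(x,C2^j\sqrt t)^{1/2}\mathfrak{M}_2 f(x)\leq C\,2^{jn/2}\|H_j\|_q\,\mathfrak{M}_2 f(x).
\]
Since $\|H_j\|_q\leq C 2^{-js}\|H\|_{W^q_s}\leq C 2^{-js}\|F\|_{W^q_s}$ and $s>n/2$, the series $\sum_j 2^{j(n/2-s)}$ is summable. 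Summing in $j$ and taking $\sup_{t>0}$ yields the claimed pointwise estimate.

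The main obstacle is the well-known tension between the cutoff $\chi$ (needed for $\chi H_j$ to have compact spectral support so that ${\rm (ST^q_{1,2})}$ applies) and the Fourier-side localisation of $\widehat{H_j}$ in the dyadic shell $\{|\xi|\sim 2^j\}$ (needed for finite speed of propagation): multiplying $H_j$ by $\chi$ smears $\widehat{H_j}$ outside the shell, with only Schwartz decay, so the kernel of $T_{t,j}$ does not sit strictly in $\{d(x,y)\leq C 2^j\sqrt t\}$. The resolution is to replace the strict support statement by a quantitative off-diagonal $L^2$-decay estimate for the kernel of $T_{t,j}$, obtained by iterating ${\rm (ST^q_{1,2}})$ on dyadic annuli of scale $2^k\cdot 2^j\sqrt t$ and exploiting the Schwartz decay of $\hat\chi\ast(\phi_j\widehat H)$ away from the shell; the resulting tail sums to a geometric series in $k$ and does not affect the conclusion.
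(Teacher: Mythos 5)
Your proposal takes a genuinely different route from the paper, and it also has a genuine gap. The paper decomposes the \emph{input function} $f$ into dyadic spatial annuli around $x$, applies H\"older on each annulus, and controls the kernel contribution via the weighted $L^2$ estimate from \cite[Lemma 4.3]{DOS},
\begin{equation*}
\int_X \big|K_{F(L/R^2)}(x,y)\big|^2\,\big(1+Rd(x,y)\big)^{2s}\,d\mu(y)\leq C\,V(x,R^{-1})^{-1}\|F\|^2_{W^q_{s+\varepsilon}},
\end{equation*}
which is itself a packaged consequence of ${\rm (ST^{q}_{1, 2})}$; the weight $(1+Rd)^{2s}$ supplies the geometric decay $2^{-ks}$ that makes the annulus sum converge, and {\rm (FS)} is never invoked in that proof. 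You instead decompose the \emph{multiplier} $H$ by a Littlewood--Paley partition of $\widehat H$ into dyadic Fourier shells, and then try to re-derive off-diagonal kernel decay from {\rm (FS)} shell by shell; in effect you are re-proving a version of the cited weighted kernel estimate inline. Both strategies are available in principle, but the paper's is shorter because the hard work is outsourced to the cited lemma.

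The gap is exactly the one you flag in your closing paragraph, and it is real, not cosmetic. The identity $T_{t,j}f(z)=T_{t,j}\big(f\chi_{B(x,C2^j\sqrt t)}\big)(z)$ for $z\in B(x,\sqrt t)$ needs $\supp K_{T_{t,j}}\subseteq \D_{C2^j\sqrt t}$, which by Lemma~\ref{le2.2} requires the symbol of $T_{t,j}$ to have compactly supported Fourier transform; multiplying $H_j$ by the cutoff $\chi$ destroys this, leaving only Schwartz tails of $\widehat{\chi H_j}$ outside the shell. The repair you sketch (a further dyadic decomposition of $\widehat{\chi H_j}$, applying {\rm (FS)} to each compactly-Fourier-supported piece, and summing a geometric tail from the rapid decay of $\hat\chi$) is essentially the device the paper deploys in the \emph{different} proof of Lemma~\ref{le4.1} (the functions $\phi_{\delta,j}$ of \eqref{e4.12}--\eqref{e4.14}), and it is a substantive technical construction rather than a remark. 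Its execution also requires verifying that the $L^q$ norms of the tail pieces are bounded by $2^{-jN}\|F\|_{W^q_s}$ rather than merely decaying in the auxiliary shell index, which involves relating $\|\widehat{H_j}\|_{L^1}$ to $\|F\|_{W^q_s}$ and tracking the doubling gain in the annulus index simultaneously; this is not automatic. As written, your proof is missing precisely the step that converts ${\rm (ST^{q}_{1, 2})}$ into a quantitative, summable off-diagonal kernel bound --- the step that \cite[Lemma 4.3]{DOS} supplies for free in the paper's argument.
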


\begin{proof}
Let $r'\in (1, 2]$ such that $1/r+ 1/r'=1$ and fix $R>0$.
 Consider a partition of $X$ into the dyadic annuli
$A_k=\{ y:  2^{k-1}R^{-1}< d(x,y)\leq 2^kR^{-1} \}$, for $k\in\mathbb N$.
For a given $f$ we set
\[
f_0(y)=f(y)\chi_{\{y: \, d(x,y)\leq R^{-1}\}}, \ \ f_k(y)=f(y)\chi_{A_k}(y), \ \ k\in\mathbb N.
\] Then, note that  $ \left|F\big({L/R^2}\big) f(x)\right|\leq   \sum_{k=0}^{\infty} |F(L/R^2) f_k(x)| $. By H\"older's inequality,
 for  $f\in L^2(X)\cap L^p(X)$,
  \begin{eqnarray*}
 \left|F\big({L/R^2}\big) f(x)\right|
  \leq
   \left(\int_{  d(x,y)\leq R^{-1}} |K_{F(L/R^2)} (x,y)|^{r'} d\mu(y)\right)^{1/r'} \|f_0\|_r+ \sum_{k=1}^{\infty} 
 \left(\int_{A_k}
|K_{F(L/R^2)} (x,y)|^{r'} d\mu(y)\right)^{1/r'}  \|f_k\|_r.
 \end{eqnarray*}
 We also note that
   \begin{eqnarray*}
 \left(\int_{A_k}
|K_{F(L/R^2)} (x,y)|^{r'} d\mu(y)\right)^{1/r'}
&\leq&
\sum_{k=0}^{\infty} V(x, 2^{k} R^{-1})^{{1\over r'}-{1\over 2}}\left(\int_{A_k} |K_{F(L/R^2)} (x,y)|^2
 d\mu(y)\right)^{1/2}
 \end{eqnarray*}
and $\|f_k\|_r\leq C V(x, 2^{k} R^{-1})^{1/r} \mathfrak M_r f(x)$.
Combining all these inequalities gives
  \begin{eqnarray}  \label{e3.16}
  \left|F\big({L/R^2}\big) f(x)\right|\le C\mathfrak M_r f(x) \sum_{k=0}^{\infty} 2^{-ks}V(x, 2^{k} R^{-1})^{1/2}\, \mathfrak I (R),
 \end{eqnarray}
where
\[  \mathfrak I (R)= \left(\int_{X} |K_{F(L/R^2)} (x,y)|^2 (1+Rd(x,y))^{2s}d\mu(y)\right)^{1/2}.\]

  Notice  that $L$ satisfies  the condition  ${\rm (ST^{q}_{1, 2})}$   for some $q\in [2, \infty]$.
 By \cite[Lemma 4.3]{DOS} we have
  \begin{eqnarray*} 
\int_X \big|K_{F(L/R^2)} (x,y)\big|^2 \big(1+Rd(x,y)\big)^{2s} d\mu(y)&\leq&
 C  V(x, R^{-1})^{-1}
 \|F\|^2_{W^q_{{s } +\varepsilon}}, \ \ \ \forall \varepsilon>0.
\end{eqnarray*}
Hence,  this and \eqref{e3.16}  yield
 \begin{eqnarray*}
|F(L/R^2)f(x)|
&\leq&
C\|F\|_{W^q_{{s } +\varepsilon}} \sum_{k=0}^{\infty}  2^{-ks} \left({V(x, 2^{k} R^{-1})\over  V(x, R^{-1})}\right)^{1/2}   \mathfrak M_r f(x).
\end{eqnarray*}
Since $s> n/2$,  we get
\begin{eqnarray*}
|F(L/R^2)f(x)| \leq  C\|F\|_{W^q_{{s } +\varepsilon}} \sum_{k=0}^{\infty}  2^{({n\over 2}-s)k}   \mathfrak M_r f(x)\leq
  C \|F\|_{W^q_{{s } +\varepsilon}}   \mathfrak M_r f(x)\,.
 \end{eqnarray*}
From this and $L^p$-boundedness of  the Hardy-Littlewood maximal operator ${\mathfrak  M}_r$ for $p>r$,
  we obtain that
 $  \| F^{\ast}(L) \|_{p\to p}\leq C$ for $p\in (2,\infty)$.
This completes the proof.
\end{proof}

We conclude this section with  the following result which covers a special case of Theorem A. In fact,  this
shows the case $p_0=1$ in  Theorem A if we take $q=2$ in the following.

\begin{cor}\label{cor3.3}
Let $L$ satisfy the Gaussian bounds ${\rm (GE)}$.
Suppose that  the  condition  ${\rm (ST^{q}_{1, 2})}$ holds for some $q\in [2, \infty]$.
If  $\alpha >n/2-1/q$,
then for each $2\leq r<\infty$,
\begin{equation*}
 S_{\ast}^{ \alpha }(L)f(x)\leq C  \mathfrak M_r f(x).
\end{equation*}
As a consequence, $S_{\ast}^{\alpha }(L)$ is a bounded operator on $L^p(X)$ for all $2\leq p\leq\infty$.
\end{cor}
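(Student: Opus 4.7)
The plan is to carry out a dyadic decomposition of the Bochner--Riesz multiplier $G(\lambda) = (1-\lambda^2)_+^\alpha$ near the singular point $\lambda = 1$, to control each piece via Lemma~\ref{le3.0} (for the smooth part away from $\lambda = 1$) and Proposition~\ref{prop3.2} (for the annular pieces near $\lambda = 1$), and then to sum the estimates. Specifically, write $S_R^\alpha(L) = G(\sqrt{L}/R)$ and fix a smooth bump $\phi$ with $\supp \phi \subset (1/4, 1)$ generating a dyadic partition of unity $\sum_{j \ge j_0} \phi(2^j(1-\lambda)) = 1$ on $[1 - 2^{-j_0+1}, 1)$. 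Set $G_j(\lambda) = G(\lambda) \phi(2^j(1-\lambda))$ for $j \ge j_0$ and $G_0 = G - \sum_{j \ge j_0} G_j$, all extended evenly to $\mathbb{R}$. Then $G_0$ is smooth and vanishes near $\pm 1$, while for $j \ge j_0$ the function $G_j$ is supported in a window of length $\sim 2^{-j}$ near $\lambda = 1$ with $\|G_j^{(k)}\|_\infty \lesssim 2^{j(k-\alpha)}$. Setting $\tilde G_j(t) := G_j(\sqrt{t})$ one has $G_j(\sqrt L/R) = \tilde G_j(L/R^2)$, and for $j_0$ chosen large enough each $\tilde G_j$ with $j \ge j_0$ is supported in $[1/4, 1]$.

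Since $\tilde G_0$ is a smooth compactly supported even function with $\supp \tilde G_0 \subset [-1, 1]$ vanishing near $\pm 1$, Lemma~\ref{le3.0} gives $\sup_{R > 0} |\tilde G_0(L/R^2) f(x)| \le C \mathfrak M f(x) \le C \mathfrak M_r f(x)$ for every $r \ge 1$. For each $j \ge j_0$, a rescaling argument should yield
\[
\|\tilde G_j\|_{W^q_s} \lesssim 2^{j(s - \alpha - 1/q)}
\]
for every $s \ge 0$: the factors $2^{-j\alpha}$, $2^{-j/q}$ and $2^{js}$ arise respectively from the pointwise size $(1-\lambda^2)^\alpha \sim 2^{-j\alpha}$ on the support, the length $\sim 2^{-j}$ of the support interval, and the $s$ derivatives of the rescaled bump $\phi(2^j \cdot)$; the substitution $t \mapsto \sqrt{t}$ is smooth with bounded derivatives on $[1/4, 1]$ and does not affect this scaling. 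Since $\supp \tilde G_j \subset [1/4, 1]$, Proposition~\ref{prop3.2} then gives, for every $s > n/2$,
\[
\sup_{R > 0} |\tilde G_j(L/R^2) f(x)| \le C\, 2^{j(s - \alpha - 1/q)}\, \mathfrak M_r f(x).
\]

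Because $\alpha > n/2 - 1/q$, I can choose $s$ with $n/2 < s < \alpha + 1/q$, so that $s - \alpha - 1/q < 0$ and the geometric series $\sum_{j \ge j_0} 2^{j(s - \alpha - 1/q)}$ converges. Combining with the $\tilde G_0$ bound yields the pointwise estimate $S_*^\alpha(L) f(x) \le C \mathfrak M_r f(x)$ for every $r \in [2, \infty)$. The corresponding $L^p$ boundedness for $p \in (2, \infty]$ follows from the $L^p$ boundedness of $\mathfrak M_r$ for $p > r$ (and from the pointwise bound for $p = \infty$), while the endpoint $p = 2$ is covered by Corollary~\ref{cor22}, since $\alpha > n/2 - 1/q \ge 0$ and the Gaussian bound (GE) implies $({\rm EV}_{1,2})$. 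The main technical obstacle will be verifying the fractional Sobolev estimate $\|\tilde G_j\|_{W^q_s} \lesssim 2^{j(s - \alpha - 1/q)}$ rigorously for non-integer $s$ and $\alpha$, since the factor $(1-\lambda^2)^\alpha$ inside the narrow dyadic window is itself non-smooth; once this scaling is secured, the rest is a routine geometric summation combined with the Hardy--Littlewood maximal inequality.
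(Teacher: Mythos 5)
Your proof is correct and rests on the same three ingredients the paper uses for this corollary: Lemma~\ref{le3.0} for the smooth low-frequency part, Proposition~\ref{prop3.2} for the piece supported near the upper edge of the spectrum, and Corollary~\ref{cor22} for the endpoint $p=2$ (for which you correctly note that ${\rm (GE)}$ supplies both ${\rm (FS)}$ and ${\rm (EV_{1,2})}$). The only difference in execution is that you decompose the near-singularity part dyadically in $1-\lambda$ and sum a geometric series, whereas the paper cuts $(1-t^2)_+^{\alpha}$ with a \emph{single} smooth bump $\phi(t^2)$ supported in $\{|\xi|\ge 1/4\}$ and applies Proposition~\ref{prop3.2} once to $S^{\alpha,1}(t^2)=(1-t^2)_+^{\alpha}\phi(t^2)$, invoking directly that this function lies in $W^q_{s}$ exactly when $s<\alpha+1/q$. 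Your dyadic version is in effect a re-derivation of that Sobolev membership, so the two routes are essentially equivalent. One small correction to your closing worry: on each dyadic window $\{1-2^{-j}\le\lambda\le 1-2^{-j-2}\}$ the factor $(1-\lambda^2)^{\alpha}$ \emph{is} smooth, with $k$-th derivative of size $\sim 2^{j(k-\alpha)}$; the bound $\|\tilde G_j\|_{W^q_s}\lesssim 2^{j(s-\alpha-1/q)}$ for non-integer $s$ then follows by interpolating between integer orders, so this is not the delicate step. What does deserve a word is the extension of $\tilde G_0(t)=G_0(\sqrt t)$ to $t<0$: the even extension has a corner at $t=0$ (hence is not in $W^2_s$ for large $s$), so one should extend smoothly instead — for instance, take $(1-t)^{\alpha}(1-\phi(t))$ also for $t<0$, as the paper's choice $S^{\alpha,2}$ implicitly does — which is harmless since only $t\ge 0$ enters the functional calculus of $L$.
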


\begin{proof}
  Let $ S^{\alpha }(t) = (1-t^2)^{\alpha }_+$.  We set
$$
 S^{\alpha }(t) =S^{\alpha }(t)  \phi(t^2) + S^{\alpha }(t)(1-\phi(t^2))=:S^{\alpha, 1 }(t^2) +S^{\alpha, 2 }(t^2),
$$
 where   $\phi\in C^{\infty}(\mathbb R)$ is supported in  $ \{ \xi:  |\xi| \geq 1/4 \}$
 and $\phi =1$ for all $|\xi|\geq 1/2$. Define the maximal Bochner-Riesz operators $S^{\alpha, i }_{ \ast} (L), i=1, 2$ by
 $$
 S^{\alpha, i }_{ \ast} (L) f(x) =
  \sup_{R>0} | S^{\alpha, i }  \big({L/R^2}\big)f(x)|, \ \  \ i=1,2.
  $$
 Note that by Lemma~\ref{le3.0},
$
 S^{\alpha, 2 }_{\ast} (L) f(x)  \leq C  \mathfrak M f(x).
$
For the operator $S^{\alpha, 1 }_{\ast} (L)$, we choose $n/2<s< \alpha +1/q$, and notice  that    $S^{ \alpha, 1 }\in W^q_{s+\varepsilon}$ if
and only if $ s+\varepsilon<\lambda +1/q$. Taking $\varepsilon$ small enough, we apply Proposition~\ref{prop3.2}
to obtain
that $
 S^{\alpha, 1}_{\ast} (L)f(x)\leq C   \|S^{ \alpha, 1 } \|_{W^q_{{s } +\varepsilon}} \mathfrak M_r f(x) \leq C \mathfrak M_r f(x)$
 for all $2\leq r<\infty.
$
Hence $S^{\alpha }_{\ast} (L)$ is bounded on $L^p(X)$ for all $p>2$.
This, together with Corollary~\ref{cor22}, finishes the proof of Corollary~\ref{cor3.3}.
 \end{proof}


\section{Spectral  restriction   estimate and maximal bound}\label{sec4}
 \setcounter{equation}{0}

The aim of this section is to prove  Theorem A. However, we would like to describe a slightly more general
result which remains valid for the spaces of homogeneous type. For this end,  we assume  that $(X, d, \mu)$
satisfies the doubling condition, that is \eqref{eq2.2}.
In this section, we will prove the following result, which yields  Theorem A as
a special case with $q=2$ and the uniform volume estimate  \eqref{eq1.1}.

 \begin{thm}\label{th1.1}
 Suppose  that $(X, d, \mu)$ satisfies the doubling condition \eqref{eq2.2}.
 	Suppose    that   $L$  satisfies the   property ${\rm (FS)}$
 	and the condition  ${\rm (ST^{q}_{ p_0, 2})}$ for some   $1\leq p_0 <2$ and $2\leq q\leq \infty$.
 	Then the operator $S_{\ast}^{\alpha}(L)$ is bounded on $L^p(X)$ whenever
 	\begin{eqnarray}\label{e4.0}
 	2\leq p< p_0'  \ \ \ {\rm and }\ \ \  \alpha>
 	\max\left\{ n\left({1\over p_0}-{1\over 2}\right)- {1\over q}, \, 0 \right\}.
 	\end{eqnarray}
 	As a consequence, if $f\in L^p(X)$,   for  $p$ and $\alpha$ in the  range of  \eqref{e4.0},
 	$$
 	\lim\limits_{R\to \infty}S_{R}^{\alpha}(L)f(x)=f(x), \ \ \ a.e.
 	$$
 	 \end{thm}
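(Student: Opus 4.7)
The plan is to dyadically decompose $(1-\lambda)_+^\alpha$ around its boundary at $\lambda=1$, handle the smooth bulk via the Mellin-transform argument of Lemma~\ref{le3.1}, and reduce the maximal bound for each boundary annulus to a pair of square functions through Stein's identity, the core step being a square-function estimate extracted from the restriction-type condition $({\rm ST}^{q}_{p_{0},2})$.

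Fix $\phi_{0}\in C_{c}^{\infty}(\RR)$ with $\phi_{0}\equiv 1$ on $(-\infty,1/2]$ and $\supp\phi_{0}\subset(-\infty,3/4)$, and a bump $\phi\in C_{c}^{\infty}([1/2,2])$ giving a dyadic partition of unity of $(0,1/2]$ in the variable $u=1-\lambda$. Setting $\phi_{j}(\lambda):=(2^{j}(1-\lambda))^{\alpha}\phi(2^{j}(1-\lambda))$, each $\phi_{j}$ is a fixed bump in the rescaled variable, supported in $\lambda\in[1-2^{1-j},1-2^{-j-1}]$, with $\|\phi_{j}\|_{\infty}\sim 1$ and $\|\phi_{j}^{(k)}\|_{\infty}\lesssim 2^{jk}$, and
\[
(1-\lambda)_{+}^{\alpha}=\phi_{0}(\lambda)(1-\lambda)_{+}^{\alpha}+\sum_{j\geq 1}2^{-j\alpha}\phi_{j}(\lambda).
\]
The first term has a smooth compactly supported symbol of uniform class in $R$, whose Mellin transform satisfies the hypothesis of Lemma~\ref{le3.1}; hence the maximal operator it generates is bounded on $L^{p}$ for $p_{0}<p<p_{0}'$.

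For each $j\geq 1$ set $T_{R}^{j}=\phi_{j}(L/R^{2})$. Since $\phi_{j}$ vanishes near $0$, Stein's identity
\[
|T_{R}^{j}f(x)|^{2}=-2\Re\int_{R}^{\infty}\overline{T_{s}^{j}f(x)}\,\partial_{s}T_{s}^{j}f(x)\,ds
\]
combined with Cauchy--Schwarz yields the pointwise control $\sup_{R>0}|T_{R}^{j}f(x)|\leq\sqrt{2}\,G_{j}f(x)^{1/2}\,\widetilde G_{j}f(x)^{1/2}$, where
\[
G_{j}f(x)=\Big(\int_{0}^{\infty}|\phi_{j}(L/s^{2})f(x)|^{2}\,\tfrac{ds}{s}\Big)^{1/2},\quad \widetilde G_{j}f(x)=\Big(\int_{0}^{\infty}|\widetilde\phi_{j}(L/s^{2})f(x)|^{2}\,\tfrac{ds}{s}\Big)^{1/2},
\]
and $\widetilde\phi_{j}(\lambda):=-2\lambda\phi_{j}'(\lambda)$ is a bump with the same support as $\phi_{j}$ but amplitude $O(2^{j})$. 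By H\"older, $\|\sup_{R}|T_{R}^{j}f|\|_{p}\leq\sqrt{2}\,\|G_{j}f\|_{p}^{1/2}\|\widetilde G_{j}f\|_{p}^{1/2}$. The main step is the square-function bound
\[
\|G_{j}f\|_{p}\leq C_{\varepsilon}2^{j(\alpha_{0}-1/2+\varepsilon)}\|f\|_{p},\qquad \|\widetilde G_{j}f\|_{p}\leq C_{\varepsilon}2^{j(\alpha_{0}+1/2+\varepsilon)}\|f\|_{p},
\]
for $2\leq p<p_{0}'$, with $\alpha_{0}:=\max\{n(1/p_{0}-1/2)-1/q,0\}$. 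The $L^{2}$ endpoint follows from Plancherel and the substitution $u=\lambda/s^{2}$, which give the sharper $\|G_{j}\|_{2\to 2}\lesssim 2^{-j/2}$ and $\|\widetilde G_{j}\|_{2\to 2}\lesssim 2^{j/2}$. For $p>2$, I would dualize $\|G_{j}f\|_{p}^{2}=\sup_{\|w\|_{(p/2)'}\leq 1}\int|G_{j}f|^{2}w$, swap the $s$-integration via Fubini, and reduce to a weighted single-scale inequality $\int|\phi_{j}(L/s^{2})f|^{2}w\lesssim 2^{2j\alpha_{0}}\int|f|^{2}\mathfrak M_{r_{0}}w$ for an appropriate $r_{0}>1$; the gain $2^{j\alpha_{0}}$ is produced by $({\rm ST}^{q}_{p_{0},2})$ through $\|\delta_{s}\phi_{j}\|_{q}\sim 2^{-j/q}$ together with finite propagation, which localizes $\phi_{j}(L/s^{2})$ on balls of radius $\sim 1/s$; the dyadic reorganization of the $s$-integral is handled by the Muckenhoupt-weighted Littlewood--Paley inequality of Proposition~\ref{prop2.5}, and the $L^{(p/2)'}$-boundedness of $\mathfrak M_{r_{0}}$ closes the argument.

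Granting these bounds, $\|\sup_{R}|T_{R}^{j}f|\|_{p}\leq C_{\varepsilon}2^{j(\alpha_{0}+\varepsilon)}\|f\|_{p}$, so summing the geometric series in $j$ with ratio $2^{\alpha_{0}+\varepsilon-\alpha}<1$ yields $\|S_{*}^{\alpha}(L)f\|_{p}\lesssim\|f\|_{p}$ whenever $\alpha>\alpha_{0}$, upon choosing $\varepsilon<\alpha-\alpha_{0}$. The a.e.\ convergence then follows by the standard Banach principle: the uniform $L^{p}$-bound on $S_{R}^{\alpha}(L)$ supplied by Proposition~\ref{prop2.4}(ii) combined with the maximal bound reduces the problem to $f$ in the dense subspace $L^{2}(X)\cap L^{p}(X)$, on which $L^{2}$-convergence $S_{R}^{\alpha}(L)f\to f$ is immediate from the spectral theorem, and the maximal bound upgrades this to pointwise convergence. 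The main obstacle is the $L^{p}$ square-function estimate with the precise $j$-scaling $2^{j\alpha_{0}}$: this is the abstract analogue of the Carbery--Rubio~de~Francia inequality, and its derivation in the present doubling setting without translation invariance requires a delicate interplay between finite speed of propagation (for spatial localization), the restriction-type condition $({\rm ST}^{q}_{p_{0},2})$ (for the sharp $j$-scaling), and the weighted Littlewood--Paley estimate (for the vector-valued passage to $L^{p}$), rather than the Fourier-analytic techniques available when $L=-\Delta$.
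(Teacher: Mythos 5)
Your proposal is correct, and it reaches the square-function reduction by a genuinely different route from the paper. The paper first invokes the Stein--Weiss integral formula
\[
\Big(1-\tfrac{|m|^2}{R^2}\Big)^{\alpha}=C_{\alpha,\rho}R^{-2\alpha}\int_{|m|}^{R}(R^2-t^2)^{\alpha-\rho-1}t^{2\rho+1}\Big(1-\tfrac{|m|^2}{t^2}\Big)^{\rho}dt,
\]
which via Cauchy--Schwarz gives the pointwise bound $S_{*}^{\alpha}(L)f\lesssim\sup_{R}\big(R^{-1}\int_0^R|S_t^{\rho}(L)f|^2dt\big)^{1/2}$ for any $\rho>-1/2$ with $\alpha>\rho+1/2$; it then dyadically decomposes $(1-|\xi|^2)^{\rho}_+$ and arrives at exactly the square function $T_{\delta}$ of its Proposition~\ref{prop4.1} with $\delta=2^{-k}$. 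You instead decompose the symbol $(1-\lambda)_+^{\alpha}$ directly and apply Stein's derivative trick $|T_R^jf|^2=-2\Re\int_R^{\infty}\overline{T_s^jf}\,\partial_sT_s^jf\,ds$ to each dyadic annulus, producing two square functions $G_j$ and $\widetilde G_j$; Cauchy--Schwarz and H\"older then give $\|\sup_R|T_R^j f|\|_p\lesssim 2^{j(\alpha_0+\varepsilon)}\|f\|_p$, and the series sums provided $\alpha>\alpha_0$. The core technical step is identical: your claimed bound $\|G_jf\|_p\lesssim 2^{j(\alpha_0-1/2+\varepsilon)}\|f\|_p$ is precisely the paper's $\|T_\delta\|_{p\to p}\lesssim\delta^{1/2+1/q+n(1/2-1/p_0)}$ with $\delta=2^{-j}$, and your sketch of its proof (dualize, weighted $L^2$ inequality via finite speed of propagation and $({\rm ST}^q_{p_0,2})$, Muckenhoupt-weighted Littlewood--Paley, $\mathfrak M_{r_0}$ bounded on $L^{(p/2)'}$ since $1/p+1/p_0>1$) mirrors the paper's Lemma~\ref{le4.1}. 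The paper's route has the minor advantage that the Stein--Weiss formula yields a pointwise bound valid for all nice $f$ at once, whereas the derivative-trick identity needs a little care about $T_s^jf(x)\to 0$ as $s\to\infty$ (easiest to establish first on the dense class of $f$ with compactly supported spectral measure and then pass to the limit); your route has the compensating merit that it works with $\alpha$ directly and avoids introducing the auxiliary parameter $\rho$.
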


In order to prove Theorem~\ref{th1.1} we use the classical approach which makes use of the square function to control
the maximal operator (see \cite{Ca, C1, Lee}).  Here we should mention that we may assume 
that  
 \begin{equation} \label{nontrivial}  n\left({1\over p_0}-{1\over 2}\right)- {1\over q}\ge  0. \end{equation}
 Otherwise, by \cite[Corollary I.7]{COSY}  it follows that $L=0.$ Thus Theorem \ref{th1.1} trivially holds. We assume the condition \eqref{nontrivial} for the rest of this section.


\noindent{\bf 4.1 Reduction to square function estimate.}
Let us recall  the well known identity, for $\alpha>0,$
\begin{eqnarray*}\label{e4.1}
	\left(1-{|m|^2\over R^2}\right)^{\alpha }=C_{\alpha, \, \rho} R^{-2\alpha }\int_{|m|}^R (R^2-t^2)^{\alpha-\rho-1}t^{2\rho+1}
	\left(1-{|m|^2\over t^2}\right)^{ \rho}dt
\end{eqnarray*}
with $C_{\alpha, \, \rho}=2\Gamma(\alpha+1)/\Gamma(\rho+1)\Gamma(\alpha-\rho)$. By the spectral theory, we use an argument in \cite[p.278--279]{SW}
to obtain
\begin{eqnarray}\label{e4.2}
	S_{\ast}^{\alpha}(L)f(x)\leq  C'_{\alpha,\, \rho} \sup_{0<R<\infty}  \left( {1\over R}\int_0^R |S_t^{\rho}(L) f(x)|^2dt\right)^{1/2}
\end{eqnarray}
 provided that $\rho>-1/2$ and $\alpha  >\rho+1/2$.

By dyadic decomposition, we   write $x^{\rho}_+=\sum_{k\in{\mathbb Z}} 2^{-k\rho} \phi(2^{k}x)$ for some $\phi\in C_0^{\infty}(1/4, 1/2)$. Thus
\begin{eqnarray}\label{e4.3}
(1-|\xi|^2)_+^{\rho}=:
\phi_0^{\rho} (\xi)+\sum_{k=1}^{\infty} 2^{-k\rho} \phi_k^{\rho} (\xi)
 \end{eqnarray}
 where $\phi_k^{\rho}=\phi(2^k(1-|\xi|^2)), k\geq 1$  and
 \begin{eqnarray*}
{\rm supp}\ \phi_0^{\rho}&\subseteq& \{ |\xi|\leq {3\over 4}\}, \nonumber\\[4pt]
{\rm supp}\ \phi_k^{\rho}&\subseteq& \{ 1-2^{-k}\leq |\xi|\leq 1-2^{-k-2}\}.
 \end{eqnarray*}
By \eqref{e4.2},   for $\alpha>\rho+1/2$
  \begin{eqnarray}\label{e4.5}
\|S_{\ast}^{\alpha}(L)f\|_p
&\leq& C \left\|
\left(\sup_{0<R<\infty} {1\over R}\int_0^R \Big|\phi_0^{\rho}
\left({\sqrt{L}\over t}\right) f(x)\Big|^2dt\right)^{1/2}\right\|_p
\\ &+& C\sum_{k=1}^{\infty} 2^{-k\rho}\left\|\left(\int_0^{\infty}
\Big|\phi_k^{\rho} \left({\sqrt{L}\over t}\right) f(x)\Big|^2{dt\over t}\right)^{1/2} \right\|_p.  \nonumber
 \end{eqnarray}
By Lemma~\ref{le3.1}, for the first term we have
  \begin{eqnarray}\label{e4.6}
 \left\|\sup_{0<t<\infty} \left|\phi_0^{\rho}
\left(t{\sqrt{L} }\right) f(x)\right|  \right\|_p \leq C\|f\|_p, \ \ \ \  p_0<p<p'_0.
 \end{eqnarray}

Now, in order to prove Theorem~\ref{th1.1}, by \eqref{e4.5} it is sufficient to show the following.

\begin{prop}\label{prop4.1} Let $\phi$ be a fixed $C^{\infty}$ function supported in $[-1/2, 1/2], |\phi|\leq 1$.  For every $0<\delta\leq 1,$
 define
 \begin{eqnarray}\label{e4.7}
 T_{\delta}f(x)=\left( \int_0^{\infty} \Big|\phi\left(\delta^{-1}\left(1-{ {L}\over t^2} \right) \right)f(x)\Big|^2
   {dt\over t}\right)^{1/2}.
   \end{eqnarray}
Suppose  that   $L$ satisfies   the property  {\rm (FS)}
and the condition  ${\rm (ST^{q}_{p_0, 2})}$ for some $1\leq p_0<2$ and $ 2\leq q\leq \infty.$ 
%
Then  for all $2\leq p<p'_0 $ and $0<\delta\leq 1$,
 \begin{eqnarray}\label{e4.8}
   \|T_{\delta}f\|_{p}
  &\leq& C(p)\delta^{{1\over 2}+{1\over q}+n({1\over 2}- {1\over p_0})}   \|f\|_{p}.
  \end{eqnarray}
\end{prop}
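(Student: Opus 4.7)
The target exponent $\alpha:=\tfrac12+\tfrac1q+n(\tfrac12-\tfrac1{p_0})$ satisfies $\alpha\le 1/2$ by the nontriviality assumption \eqref{nontrivial}. My strategy is therefore to interpolate a sharp $L^2$ bound of strength $\delta^{1/2}$ with an $L^{p}$ bound of strength $\delta^{\alpha}$ for some $p$ close to $p_0'$: since $\delta\le 1$ and the interpolated exponent is at least $\alpha$, this yields \eqref{e4.8} for all $2\le p<p_0'$.

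The $L^2$ estimate follows immediately from the spectral theorem. Exchanging the order of integration and substituting $u=\lambda/t$,
\[
\|T_\delta f\|_2^2=\int_0^\infty \Bigl(\int_0^\infty |\phi(\delta^{-1}(1-u^2))|^2\,\tfrac{du}{u}\Bigr)\,d\langle E_{\sqrt L}(\lambda)f,f\rangle \le C\delta\,\|f\|_2^2,
\]
since the inner integrand is supported in an interval of length $\sim\delta$ around $u=1$.

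For the $L^{p}$ bound with $p$ close to $p_0'$, I would combine a Littlewood-Paley decomposition in the spectral variable with the restriction condition and finite speed of propagation, in the spirit of Proposition \ref{prop3.2}. Fix $\psi\in C_c^\infty$ supported in $[1/4,4]$ with $\sum_k\psi(2^{-k}\lambda)\equiv 1$ on $(0,\infty)$ and set $P_k=\psi(2^{-k}\sqrt L)$. Since $\phi(\delta^{-1}(1-L/t^2))$ is supported where $\sqrt L\sim t$, only finitely many $P_j$ contribute for each $t$, and an almost-orthogonality step yields
\[
T_\delta f(x)^2\le C\sum_j Q_j f(x)^2,\qquad Q_jf(x)^2:=\int_{t\sim 2^j}\bigl|\phi(\delta^{-1}(1-L/t^2))P_jf(x)\bigr|^2\,\tfrac{dt}{t}.
\]
The heart of the matter is then a uniform pointwise estimate $Q_jf(x)\le C\delta^{\alpha}\mathfrak M_{r}(P_jf)(x)$ for some $r\in[2,p)$, which I would derive from three ingredients: (i) the kernel of $\phi(\delta^{-1}(1-L/t^2))$ is essentially supported in $\{d(x,y)\lesssim(t\delta)^{-1}\}$, because the even extension of $s\mapsto \phi(\delta^{-1}(1-s^2))$ has Fourier transform decaying rapidly outside $|\tau|\lesssim\delta^{-1}$, by Lemma \ref{le2.2}; (ii) the local bound
\[
\|\phi(\delta^{-1}(1-L/t^2))\chi_{B(x,r)}\|_{p_0\to 2}\le C\,V(x,r)^{1/2-1/p_0}(tr)^{n(1/p_0-1/2)}\delta^{1/q},
\]
obtained from ${\rm ST^q_{p_0,2}}$ with $R=2t$ applied to $\lambda\mapsto\phi(\delta^{-1}(1-\lambda^2/t^2))$; and (iii) a dyadic-annular decomposition of the spatial variable around $x$, on each annulus of which (ii) is applied after a H\"older dualization as in the proof of Proposition~\ref{prop3.2}. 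Once this pointwise bound is in hand, the Fefferman-Stein vector-valued maximal inequality together with the weighted Littlewood-Paley estimate (Proposition \ref{prop2.5}) gives
\[
\|T_\delta f\|_p\le C\delta^{\alpha}\Bigl\|\bigl(\sum_j|P_jf|^2\bigr)^{1/2}\Bigr\|_p\le C\delta^{\alpha}\|f\|_p
\]
for $p$ in an open neighborhood of $p_0'$ inside $(p_0,p_0')$; interpolating with the $L^2$ bound finishes the proof.

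The main obstacle is the uniform pointwise bound on $Q_jf$: the restriction estimate supplies only local $L^{p_0}\to L^2$ control, and converting this squared-and-integrated-in-$t$ form into pointwise majorization by a maximal operator requires a Cauchy-Schwarz with a rapidly decaying spatial weight (in the spirit of \cite[Lemma 4.3]{DOS}), adapted to the vector-valued setting. The summation over dyadic annuli must then defeat the growing factor $(tr)^{n(1/p_0-1/2)}$ appearing in the local bound, which is precisely where the kernel support estimate $d(x,y)\lesssim(t\delta)^{-1}$ coming from the finite speed of propagation is essential.
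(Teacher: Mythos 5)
Your $L^2$ estimate, your reduction to the pieces $Q_j$ via finite speed of propagation and Littlewood--Paley, and the closing step through Fefferman--Stein plus Proposition~\ref{prop2.5} are all sound, and the interpolation plan (prove the endpoint near $p_0'$, interpolate with $L^2$) is a legitimate alternative to the paper's route. But there is a genuine gap at exactly the place you flag as ``the heart of the matter'': the pointwise majorization
\[
Q_jf(x)\le C\,\delta^{\alpha}\,\mathfrak M_{r}(P_jf)(x),\qquad \alpha=\tfrac12+\tfrac1q+n\bigl(\tfrac12-\tfrac1{p_0}\bigr),
\]
cannot be obtained from the three ingredients you list. Those ingredients are precisely the machinery of Proposition~\ref{prop3.2}: they produce a \emph{single-$t$} bound of the form $\sup_{t\sim 2^j}\bigl|\phi(\delta^{-1}(1-L/t^2))P_jf(x)\bigr|\lesssim \delta^{1/q-n(1/p_0-1/2)}\mathfrak M_r(P_jf)(x)=\delta^{\alpha-1/2}\mathfrak M_r(P_jf)(x)$. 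Plugging this into $Q_j$ and using that $\int_{t\sim 2^j}dt/t\sim 1$ gives only $Q_jf(x)\lesssim \delta^{\alpha-1/2}\mathfrak M_r(P_jf)(x)$, which misses the crucial extra factor $\delta^{1/2}$. The Cauchy--Schwarz with a decaying weight that you mention (\`a la \cite[Lemma~4.3]{DOS}) is exactly this single-$t$ estimate and does not recover the missing half power.

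That extra $\delta^{1/2}$ comes from near-orthogonality in $t$ across the dyadic range $t\sim 2^j$: for a fixed spectral window of width $\sim 2^j\delta$ in $\sqrt L$, the multiplier $\phi(\delta^{-1}(1-\lambda^2/t^2))$ is nonzero only for $t$ in a subinterval of length $\sim 2^j\delta$. In the paper this is exploited by chopping $[2^{j-1},2^{j+2}]$ into $\sim\delta^{-1}$ intervals $I_\lambda$, introducing the matching spectral partition $\eta_{\lambda'}(\sqrt L)$ at scale $2^j\delta$, and then summing via the Bessel-type bound $\sum_{\lambda'}\|\eta_{\lambda'}(\sqrt L)g\|_2^2\lesssim\|g\|_2^2$. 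This orthogonality is an $L^2$ phenomenon with no pointwise analogue: $\sum_{\lambda'}|\eta_{\lambda'}(\sqrt L)g(x)|^2$ has no control by $\mathfrak M_rg(x)^2$. That is why the paper proves a weighted $L^2$ inequality (Lemma~\ref{le4.1}),
\[
\int_X|T_\delta f|^2 w\,d\mu\le C\delta^{2\alpha}\int_X|f|^2\,\mathfrak M_{r_0}w\,d\mu,\qquad 1/r_0=2/p_0-1,
\]
rather than a pointwise one, and then deduces the $L^p$ bound by duality (taking $w\in L^r$, $1/r+2/p=1$, and using that $\mathfrak M_{r_0}$ is bounded on $L^r$ for $r>r_0$). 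To complete your argument you would either have to replace the claimed pointwise bound by a weighted-$L^2$ statement of this kind, or supply a genuinely different mechanism for the $\delta^{1/2}$ gain; as written, the proposed pointwise estimate is the missing (and, I believe, unprovable) step.
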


Before we start the proof of Proposition~\ref{prop4.1},   we show that Theorem~\ref{th1.1} is a straightforward consequence of
Proposition~\ref{prop4.1}.
  
\begin{proof}[Proof of Theorem ~\ref{th1.1}]  Substituting \eqref{e4.6} and \eqref{e4.8} with $\delta=2^{-k}$ back into \eqref{e4.5} yields
that, for a small enough  $\varepsilon>0$,
\begin{eqnarray*}
	\|S_{\ast}^{\alpha}(L)f\|_p&\leq&  C\|f\|_p+
	C  \sum_{k=1}^{\infty} 2^{-k(\alpha-{1\over 2}-\varepsilon)}2^{-k({1\over 2}+{1\over q}+n({1\over 2}-{ 1\over p_0}))}
	\|f\|_p \leq C\|f\|_p
\end{eqnarray*}
provided that $2\leq p<p'_0$ and
$
\alpha>n\left({1/p_0}-{1/2}\right)-{1/q}.
$
This gives Theorem~\ref{th1.1}.
\end{proof}

In order to prove   Proposition~\ref{prop4.1}, let us   verify  \eqref{e4.8} for $p=2$ first. 
 Note that $\phi$ is a fixed $C^{\infty}$ function supported in $[-1/2, 1/2], |\phi|\leq 1$.
 It follows from the spectral theory \cite{Yo}
that, for any $f\in L^2(X)$,
\begin{eqnarray}\label{einter}
  \|T_\delta f\|_{2}
&=&\left\{\int_0^{\infty}\Big\langle\,\phi^2\left(\delta^{-1}\left(1-{ {L}\over t^2} \right)\right)f, f\Big\rangle {dt\over t}\right\}^{1/2}\nonumber 
 =\left\{\int_0^{\infty}\phi^2\left(\delta^{-1}\left(1-  t^2 \right)\right) {dt\over t}\right\}^{1/2}\|f\|_{2}\nonumber \\
 &\leq &
 C \delta^{\frac{1}{2}}\|f\|_{2}.
\end{eqnarray}
Since $\delta\in (0, 1]$ and we assume the condition \eqref{nontrivial}, 
 the estimate \eqref{e4.8} for $p=2$ follows from \eqref{einter}.

For proof of  Proposition~\ref{prop4.1} for $2<p<p_0'$, we make use of a weighted inequality which reduces the desired 
inequality to $L^2$ weighted estimate.  See \cite{Ca, C1, LRS1}.

\noindent{\bf 4.2. Weighted inequality for the square function.}
Let $r_0$ be a number such that $1/r_0 = 2/p_0 -1.$

\begin{lemma} \label{le4.1}  Suppose  that   $L$ satisfies   the property  {\rm (FS)}
and the condition  ${\rm (ST^{q}_{p_0, 2})}$ for some $1\leq p_0<2$ and $2\leq q\leq \infty$.
For any  $0\leq w$ and $0<\delta\le 1$,
\begin{eqnarray}\label{e4.10}
\int_X |T_\delta f(x)|^2 w(x) d\mu(x)  \leq C\delta^{1+{2\over q}+n({ 1- {{2}\over p_0} })} \int_X |f(x)|^2 \mathfrak M_{r_0}w(x)d\mu(x).
  \end{eqnarray}
\end{lemma}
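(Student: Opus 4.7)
The plan is to establish the weighted $L^2$ estimate by combining a spatial decomposition of $F_t(\sqrt L):=\phi(\delta^{-1}(1-L/t^2))$ via finite speed of propagation, the dual form of the restriction estimate ${\rm (ST^q_{p_0,2})}$, H\"older's inequality on a spatial covering, and finally a spectral localization to extract the $\delta$-factor from the $dt/t$-integration.

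First, I would fix $t>0$, set $\psi:=\delta_tF_t$, and apply Fourier inversion together with a dyadic partition $\{\chi_k\}_{k\ge 0}$ of the Fourier variable at scales $|\tau|\sim 2^k/(t\delta)$ to write $F_t(\sqrt L)=\sum_{k\ge 0}F_t^k(\sqrt L)$ with $F_t^k:=(\chi_k\widehat{F_t})^\vee$. Lemma~\ref{le2.2} then gives that the kernel of $F_t^k(\sqrt L)$ is supported in $\D_{r_k}$ with $r_k:=2^k/(t\delta)$, and the rapid decay $|\widehat{\psi}(\sigma)|\lesssim_N\delta(1+\delta|\sigma|)^{-N}$ yields $\|\delta_tF_t^k\|_q\le C_N\delta^{1/q}2^{-kN}$ for all $N\ge 0$.

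Next, covering $X$ by a bounded-overlap family $\{B_i=B(x_i,r_k)\}$, the kernel support gives $F_t^k(\sqrt L)f(x)=F_t^k(\sqrt L)(f\chi_{2B_i})(x)$ for $x\in B_i$. H\"older's inequality with conjugate exponents $p_0'/2$ and $r_0$ (valid since $1/r_0=2/p_0-1$ yields $2/p_0'+1/r_0=1$), combined with the dual of ${\rm (ST^q_{p_0,2})}$, gives
\[
\int_{B_i}|F_t^k(\sqrt L)f|^2 w\le \|\chi_{B_i}F_t^k(\sqrt L)(f\chi_{2B_i})\|_{p_0'}^2\,\|w\chi_{B_i}\|_{r_0}\le C\bigl(\tfrac{2^k}{\delta}\bigr)^{2n(1/p_0-1/2)}\delta^{2/q}2^{-2kN}\int_{2B_i}|f|^2\mathfrak M_{r_0}w,
\]
where the volume factors cancel thanks to the identity $1-2/p_0+1/r_0=0$ and the bound $\|w\chi_{B_i}\|_{r_0}\le V(x_i,r_k)^{1/r_0}\inf_{2B_i}\mathfrak M_{r_0}w$. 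Summing over $i$ (bounded overlap) and $k$ (choosing $N$ large) yields the per-$t$ estimate
\[
\int|F_t(\sqrt L)f|^2 w\le C\delta^{2/q+n(1-2/p_0)}\int|f|^2\mathfrak M_{r_0}w.
\]

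The missing factor of $\delta$ must be extracted from the $dt/t$-integration. I would introduce a smooth dyadic Littlewood-Paley partition $\sum_j\Psi_j\equiv 1$ with $\tilde P_j=\Psi_j(\sqrt L)$ supported in $[2^{j-1},2^{j+1}]$; since $F_t$ is supported in $\lambda\sim t$, for $t\in[2^j,2^{j+1}]$ we have $F_t(\sqrt L)f=F_t(\sqrt L)\tilde P_jf$. The crucial observation is that for $f$ further localized spectrally at scale $\delta\cdot 2^j$, the $t$-range on which $F_t f$ is nonzero has log-length only $\sim\delta$; applying the per-$t$ bound and integrating in $t$ thus produces the factor $\delta$. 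The resulting sum over spectral pieces is controlled via Proposition~\ref{prop2.5} applied to the weight $\mathfrak M_{r_0}w$, which belongs to $A_1$ by Coifman-Rochberg since $r_0>1$. The main obstacle is making the fine spectral decomposition compatible with the weighted Littlewood-Paley estimate without loss in $\delta$; this requires careful attention to the cancellation in the $t$-integration at the spectral level.
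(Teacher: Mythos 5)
Your outline follows the same broad architecture as the paper's argument — Fourier-side dyadic decomposition of the multiplier via the finite speed of propagation, bounded-overlap ball covering, H\"older with exponents $p_0'/2$ and $r_0$, the dual restriction estimate, and finally a fine spectral decomposition to pull out the missing $\delta$ from the $t$-integration. However, there are two genuine gaps, and the second one is precisely the step you flag as "the main obstacle" without resolving it.

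First, in the per-$t$ estimate you apply $({\rm ST}^q_{p_0,2})$ to $F_t^k(\sqrt L)$, but $F_t^k=(\chi_k\widehat{F_t})^\vee$ is a Fourier truncation of a compactly supported function and is therefore \emph{not} compactly supported — the hypothesis of $({\rm ST}^q_{p_0,2})$ is that $\supp F\subset[0,R]$. To legitimately invoke the restriction condition you must first truncate $F_t^k$ in the spectral variable, which is exactly what the paper's $\psi_{\ell,\delta}$-decomposition does (producing the main pieces for $\ell\le[-\log_2\delta]$ handled by $({\rm ST}^q_{p_0,2})$ and a rapidly-decaying tail $II(j,k)$ handled by $({\rm ST}^\infty_{p_0,2})$). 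This is fixable but is not merely a tail estimate you can wave away. Second, and more seriously: to turn "$F_t\eta_\lambda\ne0$ only for $t$ in a $\delta$-window" into the factor $\delta$ after the weight is present, you need a bound of the form $\sum_\lambda\int|\eta_\lambda(\sqrt L)g|^2\,\mathfrak M_{r_0}w\lesssim\int|g|^2\,\mathfrak M_{r_0}w$ for the \emph{fine}-scale partition $\{\eta_\lambda\}$. Proposition~\ref{prop2.5} gives a weighted square-function estimate only for a dyadic Littlewood--Paley partition, not for a uniform partition at scale $\delta$ inside a dyadic block, so it cannot be invoked at that stage. The paper avoids this obstruction by reversing the order of operations: it performs the ball covering $\{B_m\}$ \emph{before} the fine spectral decomposition, so that after H\"older the weight appears as the frozen constant $\|\bchi_{B_m}w\|_{r_0}$, and plain $L^2$-orthogonality of the $\eta_{\lambda'}(\sqrt L)$ on each ball yields $\sum_{\lambda}\int_{I_\lambda}(\sum_{|\lambda'-\lambda|\lesssim2^\ell}\|\eta_{\lambda'}(\sqrt L)g\|_2)^2\,\frac{dt}{t}\lesssim 2^{2\ell}\delta\|g\|_2^2$; Proposition~\ref{prop2.5} is used only at the very end, for the coarse dyadic sum over $k$. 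Your write-up keeps the spatial covering inside the per-$t$ bound and then tries to extract $\delta$ afterwards, which is exactly where the weighted fine-scale square function is needed but unavailable — so the argument, as laid out, does not close.
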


Again before we prove the lemma we show that it concludes the proof of  Proposition~\ref{prop4.1}.
  For every  $2< p<p_0',$
we take  $w\in L^{r}$ with $\|w\|_r\leq 1$ where $1/r + 2/p=1$. Since   $r_0< r$,  we have
\begin{align*}
	&\int_X| T_{\delta}f(x)|^2 w(x)d\mu(x)
	\leq C\delta^{ 1+{2\over q}+n( 1-{2\over p_0}) } \int_X   |f(x)|^2 \mathfrak M_{r_0} w(x)d\mu(x)\nonumber\\
	&\leq C\delta^{ 1+{2\over q}+n( 1-{2\over p_0}) } \| f  \|_{p }^2\|\mathfrak M_{r_0} w\|_r\nonumber
	\leq C\delta^{ 1+{2\over q}+n( 1-{2\over p_0}) } \| f  \|_{p }^2.
\end{align*}
Hence
$$
\|T_{\delta}f\|^2_p   \leq  C\delta^{ 1+{2\over q}+n( 1-{2\over p_0}) } \| f  \|^2_{p }
$$
for some constant $C>0$ independent of $f$ and $\delta.$ This finishes the proof of Proposition~\ref{prop4.1}.

\begin{proof}[Proof of Lemma \ref{le4.1}]
 We start with  Littlewood-Paley decomposition associated to the operator $L$.
	Fix a  function $\varphi\in C^{\infty}  $  supported in $\{ 1\leq |s|\leq 3\}$ such that $\sum_{-\infty}^{\infty}\varphi(2^ks)=1$
	on ${\mathbb R}\backslash \{0\}$.  Let
	\begin{eqnarray}\label{e4.9}
	\varphi_k(\sqrt{L}\,)f= \varphi(2^{-k}\sqrt{L}\,) f,  \ \ \ \ k\in{\mathbb Z}.
	\end{eqnarray}
	By the spectral theory we have that, for any
	$f\in L^2(X),$
	\begin{eqnarray}\label{e44}
	\sum_k \varphi_k(\sqrt{L}\,)f=f\,.
	\end{eqnarray}
	%
		By \eqref{e44}  we have that, for $f\in L^2(X)\cap L^p(X)$,
 \begin{eqnarray} \label{e4.11}
 | T_{\delta}f(x)|^2
 &\leq&
  5\sum_k  \int_0^{\infty} \Big|\phi\left(\delta^{-1}\left(1-{ {L}\over t^2} \right) \right)\varphi_k(\sqrt{L}\,)f(x)\Big|^2
  {dt\over t} \\  &=&
  5\sum_k  \int_{2^{k-1}}^{2^{k+2}}  \Big|\phi\left(\delta^{-1}\left(1-{ {L}\over t^2} \right) \right)\varphi_k(\sqrt{L}\,)f(x)\Big|^2
  {dt\over t}. \nonumber
  \end{eqnarray}
 For  given  $0<\delta\leq 1$,  we set $j_0=-[\log_2\delta]-1$.
Fix an even function  $\eta\in C_0^{\infty}$, identically one on
$\{|s| \leq 1 \}$  and supported on $\{|s| \leq 2 \}$.    Let  us set
\begin{equation}
\label{zeta}
\zeta_{j_0}(s)=\eta(2^{-j_0} s),  \  \ \  \zeta_j(s)=\eta(2^{-j} s)-\eta(2^{-j+1} s),  \   j> j_0
\end{equation}
 so that
\begin{equation}
\label{id}
1\equiv \sum_{j\geq j_0 }\zeta_j(s), \ \ \ \ \forall s>0.
\end{equation}
Then we set $\phi_{\delta}(s)={ \phi\left(\delta^{-1}\left(1-|s|^2 \right) \right)}$, and set, for
 $ j\geq j_0$
   \begin{eqnarray}\label{e4.12}
  \phi_{\delta,j}(s)={1\over 2\pi}  \int_{-\infty}^{\infty}\zeta_j(u)
  {\widehat{ \phi_{\delta} }}(u) \cos(s u) du.
  \end{eqnarray}
  Note that  $\zeta_j$ is  a dilate of a fixed smooth compactly supported  function, supported away from $0$  when $j>j_0$, hence
    \begin{eqnarray}\label{e4.13}
  | \phi_{\delta,j}(s)|\leq
  \left\{
  \begin{array}{ll}
  C_N 2^{(j_0-j)N} ,  & |s|\in [1/4, 8];\\[8pt]
   C_N  2^{j-j_0}  (1+ 2^j |s-1|)^{-N}, & {\rm otherwise}
  \end{array}
  \right.
  \end{eqnarray}
  for any $N$ and all  $j\geq j_0$   (see    \cite[page 18]{C1}).
 By the Fourier inversion formula,
 \begin{eqnarray}\label{e4.14}
 \phi\left(\delta^{-1}\left(1-{s}^2 \right)\right)= \sum_{j\geq j_0}\phi_{\delta,j}(s), \ \ \ \ s>0.
  \end{eqnarray}

Set \[d_j=2^{j+1}/t.\] By Lemma~\ref{le2.2},
\begin{eqnarray}\label{e4.15}
  \supp K_{\phi_{\delta,j}(\sqrt{L}/t)}\subseteq \D_{d_j} =\left\{(x,y)\in X\times X:  \ d(x,y)\leq 2^{j+1}/t\right\}.
\end{eqnarray}
From \eqref{e4.11}, \eqref{e4.14} and Minkowski's inequality,  it follows  that for every function $w\geq 0,$
 \begin{eqnarray}\label{e4.16}
  \int_X | T_{\delta}f(x)|^2 w(x) d\mu(x)
  &\leq&
  C\sum_k\left[ \sum_{j\geq j_0}   \left(\int_{2^{k-1}}^{2^{k+2}}
  \left\langle  \Big|\phi_{\delta,j}\left({\sqrt{L}\over t} \right)
  \varphi_k(\sqrt{L}\,)f\Big|^2, \  w\right\rangle   {dt\over t} \right)^{1/2}\right]^{2}.
 \end{eqnarray}

For a given $k\in{\mathbb Z}, j\geq j_0$, set $\rho=2^{j-k+2}>0$.  Following   an argument as in \cite{GHS},  we can choose
 a sequence $(x_m)  \in X$ such that
$d(x_m,x_\ell)> \rho/10$ for $m\neq \ell$ and $\sup_{x\in X}\inf_m d(x,x_m)
\le \rho/10$. Such sequence exists because $X$ is separable.
Let $B_m=B(x_m, 3\rho)$ and define $\widetilde{B_m}$ by the formula
$$\widetilde{B_m}=\bar{B}\left(x_m,\frac{\rho}{10}\right)\setminus
\bigcup_{\ell<m}\bar{B}\left(x_\ell,\frac{\rho}{10}\right),$$
where $\bar{B}\left(x, \rho\right)=\{y\in X \colon d(x,y)
\le \rho\}$.
Note that for $m\neq \ell$,
 $B(x_m, \frac{\rho}{20}) \cap B(x_\ell, \frac{\rho}{20})=\emptyset$. Hence, by the doubling condition \eqref{eq2.2}  
\begin{equation}\label{kk}
K=\sup_m\#\{\ell:\;d(x_m,x_\ell)\le  2\rho\} \le
  \sup_x  {V(x, (2+\frac{1}{20})\rho)\over
  V(x, \frac{\rho}{20})}< C  (41)^n.
\end{equation}
It is not difficult to see that
$$
\D_{\rho}  \subset \bigcup_{\{\ell, m:\, d(x_\ell, x_m)<
 2 \rho\}} \widetilde{B}_\ell\times \widetilde{B}_m \subset \D_{4\rho}.
$$
Recall that  $1/r_0 +2/p'_0=1$ and $d_j=2^{j+1}/t$.
It follows by \eqref{e4.15} and  H\"older's inequality  that, for every $j,k$ and any test function $w\geq 0,$
     \begin{eqnarray*}
 \left\langle  \Big|\phi_{\delta,j}\left({\sqrt{L}\over t} \right)
   \varphi_k(\sqrt{L}\,)f\Big|^2, \  w\right\rangle &=&
  \left\langle \Big| \sum_{\ell, \, m: \,  d(x_\ell, x_m)<2 d_j} \bchi_{\widetilde{B}_\ell}
   \phi_{\delta,j}\left({\sqrt{L}\over t}\right)  \bchi_{\widetilde{B}_m} \varphi_k(\sqrt{L}\,)  f
 \Big|^2, \,    w   \right \rangle. \nonumber
 \end{eqnarray*}
 Using \eqref{kk}, we have
 \begin{eqnarray*}
 \left\langle  \Big|\phi_{\delta,j}\left({\sqrt{L}\over t} \right)
   \varphi_k(\sqrt{L}\,)f\Big|^2, \  w\right\rangle  &=&
   \sum_\ell\left\langle \Big| \sum_{m: \,  d(x_\ell, x_m)<2d_j}  \bchi_{\widetilde{B}_\ell}
    \phi_{\delta,j}\left({\sqrt{L}\over t}\right)   \bchi_{\widetilde{B}_m} \varphi_k(\sqrt{L}\,)  f
  \Big|^2, \,    w   \right \rangle \nonumber\\
  &\leq &  K   \sum_{\ell } \sum_{ m:\, d(x_\ell, x_m)<2 d_j}
   \left\langle
  \big|  \bchi_{\widetilde{B}_\ell}
   \phi_{\delta,j}\left({\sqrt{L}\over t}\right)   \bchi_{\widetilde{B}_m} \varphi_k(\sqrt{L}\,)  f
 \big|^2, \,    w   \right\rangle.
 \end{eqnarray*}
  By H\"older's inequality it follows that
  \begin{eqnarray}\label{e4.17}
  \qquad \left\langle  \Big|\phi_{\delta,j}\left({\sqrt{L}\over t} \right)
   \varphi_k(\sqrt{L}\,)f\Big|^2, \  w\right\rangle \leq K^2   \sum_{m }   \|\bchi_{B_m} w\|_{{r_0}}     \left\|\bchi_{B_m}
 \phi_{\delta,j}\left({\sqrt{L}\over t}\right)
 \bchi_{\widetilde{B}_m} \varphi_k(\sqrt{L}\,)  f \right\|^2_{{p'_0}}.
  \end{eqnarray}
Since $  \phi_{\delta,j}$ is not compactly supported, we choose an even
  function $\theta\in C_0(-4, 4) $
 such that $\theta(s)=1$  for $s\in (-2, 2)$. Set
 \begin{equation}\label{psi}
 { \psi}_{0, \delta}(s)=\theta(\delta^{-1}(1-s)) \ \ \ {\rm and}\ \
 { \psi}_{\ell, \delta}(s)=\theta(2^{-\ell}\delta^{-1}(1-s)) - \theta(2^{-\ell+1}\delta^{-1}(1-s))
\end{equation}
 for all $\ell\geq 1$ such that $1=\sum_{\ell=0}^{\infty} { \psi}_{\ell, \delta}(s)$, and so
 $
  \phi_{\delta,j}(s)=  \sum_{\ell=0}^{\infty}  \big ( { \psi}_{\ell, \delta}\phi_{\delta,j}\big)(s)
  $ for all $s>0.
 $
From this, we apply  \eqref{e4.17} to write
   \begin{eqnarray}\label{e4.18}\hspace{0.5cm}
 &&\hspace{-1.2cm} \left(\int_{2^{k-1}}^{2^{k+2}}
  \left\langle  \Big| \phi_{\delta,j}\left({\sqrt{L}\over t} \right)
  \varphi_k(\sqrt{L}\,)  f\Big|^2, \  w\right\rangle   {dt\over t} \right)^{1/2}\nonumber\\
 &\leq & \sum_{\ell=0}^{ [-{\rm log_2{\delta}}]}
  \left(   \sum_{m }   \|\bchi_{B_m}w\|_{{r_0}} \int_{2^{k-1}}^{2^{k+2}}  \left\|\bchi_{B_m}
 \left ( { \psi}_{\ell, \delta}\phi_{\delta,j}\right)\left({\sqrt{L}\over t}\right)
 \bchi_{\widetilde{B}_m} \varphi_k(\sqrt{L}\,)  f \right\|^2_{{p'_0}} {dt\over t}\right)^{1/2}
\nonumber\\
 &\quad\qquad +&
  \sum_{\ell= [-{\rm log_2{\delta}}]+1}^{\infty}
 \left(  \sum_{m }   \|\bchi_{B_m} w\|_{{r_0}}  \int_{2^{k-1}}^{2^{k+2}}  \left\|\bchi_{B_m}
 \left ( { \psi}_{\ell, \delta}\phi_{\delta,j}\right)\left({\sqrt{L}\over t}\right)
 \bchi_{\widetilde{B}_m} \varphi_k(\sqrt{L}\,)  f \right\|^2_{{p'_0}} {dt\over t}\right)^{1/2}\nonumber\\[6pt]
   &=&   I(j,k) + I\!I(j,k).
  \end{eqnarray}
As to be seen later,  the first term $I(j,k)$ is the major one.

  \noindent
  {\bf Estimate for $I(j,k)$.}
 For  $k\in{\mathbb Z}$ and $\lambda=0, 1, \ldots, \lambda_0=[{8/\delta}] +1$,
we set
\begin{equation}\label{ijk1}
I_\lambda=\left[2^{k-1} + \lambda 2^{k-1}\delta, \, 2^{k-1} + (\lambda+1)2^{k-1}\delta\right],
\end{equation}
   so that
  $[2^{k-1}, 2^{k+2}]\subseteq \cup_{\lambda=0}^{\lambda_0} I_\lambda$.
Define
  \begin{eqnarray}\label{ijk2}
\eta_\lambda(s) = \eta\left( \lambda+{ 2^{k-1} -s\over 2^{k-1}\delta}\right),
  \end{eqnarray}
where $\eta\in C_0^{\infty}(-1, 1)$ and $\sum_{\lambda\in {\mathbb Z}} \eta(\cdot-\lambda)=1$. Observe that  for every $t\in I_\lambda$,
it is possible that  $
{\psi}_{\ell, \delta} \left({s/t}\right)\eta_{\lambda'} (s)\not=0$ only when  $\lambda-2^{\ell+6}\leq \lambda'\leq \lambda+2^{\ell+6}.$
Hence, for $t\in I_\lambda$,
 \begin{eqnarray}
 \label{eqn4}
 \left ( { \psi}_{\ell, \delta}\phi_{\delta,j}\right)\left({\sqrt{L}\over t}\right)
 &=&   \sum_{\lambda'=\lambda-2^{\ell+6}}^{\lambda+2^{\ell+6}}
 \left ({ \psi}_{\ell, \delta}\phi_{\delta,j}\right)\left({\sqrt{L}\over t}\right)
 \eta_{\lambda'}  (\sqrt{L}\,),
  \end{eqnarray}
  so
 \begin{eqnarray*}
 I(j,k)\leq  \sum_{\ell=0}^{ [-{\rm log_2{\delta}}]} \left[ \sum_{m }  \|\bchi_{B_m} w\|_{{r_0}}  \sum_{\lambda}  \int_{I_\lambda}
 \left(\sum_{\lambda'=\lambda-2^{\ell+6}}^{\lambda+2^{\ell+6}}  \left\|\bchi_{B_m}
 \left ({ \psi}_{\ell, \delta}\phi_{\delta,j}\right)\left({\sqrt{L}\over t}\right)
 \eta_{\lambda'}  (\sqrt{L}\,)\big[\bchi_{\widetilde{B}_m} \varphi_k(\sqrt{L}\,)  f\big] \right\|_{{p'_0}}\right)^2  {dt\over t}\right]^{1/ 2}.
  \end{eqnarray*}
Note  that  $$ \supp { \psi}_{\ell, \delta} \subseteq (1-2^{\ell+2}\delta, 1+2^{\ell+2}\delta).$$
 Moreover,  if $\ell\geq 1$,   then  ${ \psi}_{\ell, \delta}(s)=0$  for $s\in (1-2^{\ell}\delta, 1+2^{\ell}\delta)$.
By the Stein-Tomas restriction type  condition ${\rm (ST^{q}_{p_0, 2})}$, we have, for $0\leq \ell\leq [-{\rm log_2{\delta}}]$,
 \begin{eqnarray} \label{e4.19} \hspace{1cm}
 && \left\|\bchi_{B_m}
 \left (\psi_{\ell, \delta}\phi_{\delta,j}\right)\left({\sqrt{L}\over t}\right)\right\|_{2\to {p'_0} }=
\left\| \left ({  \psi}_{\ell, \delta}\phi_{\delta,j}\right) \left({\sqrt{L}\over t}\right) \bchi_{B_m}   \right\|_{p_0\to 2}
\\
   &\leq& C
   \left(2^j(1+2^{\ell+2}\delta)\right)^{n ({1\over p_0}-{1\over 2})}\mu(B_m)^{{1\over 2}-{1\over p_0}}
   \|({  \psi}_{\ell, \delta}\phi_{\delta,j})\big((1+2^{\ell+2}\delta) \cdot\big)\|_q\nonumber
   \\
   &\leq& C
 2^{jn ({1\over p_0}-{1\over 2})}\mu(B_m)^{{1\over 2}-{1\over p_0}}
   \|({  \psi}_{\ell, \delta}\phi_{\delta,j})\big((1+2^{\ell+2}\delta) \cdot\big)\|_q. \nonumber
         \end{eqnarray}	
	
From the definition of the function ${ \psi}_{\ell, \delta}$, it follows by \eqref{e4.13}  that, for any $N<\infty$,
  \begin{equation}
  \label{psidel}
   \|{  \psi}_{\ell, \delta}\phi_{\delta,j}\big((1+2^{\ell+2}\delta) \cdot\big)\|_q \le  C
   \begin{cases}
     \delta^{1\over q} 2^{(j_0-j)N},  &\ell=0,
     \\[6pt]
    \delta^{1\over q}  2^{\ell\over q}  2^{j-j_0}\big(2^{j+\ell} \delta\big)^{-N-1},  &1\leq \ell\leq [-{\rm log_2{\delta}}],
     \\[6pt]
	  \delta^{1\over q} 2^{(j_0-j)N} 2^{- \ell N }, & 0\leq \ell\leq [-{\rm log_2{\delta}}].
	  \end{cases}
	\end{equation}
	By this we have
	\begin{eqnarray} \label{e4.20}
 &&\hspace{-1.8cm}   \left\|\bchi_{B_m}
 \left ({ \psi}_{\ell, \delta}\phi_{\delta,j}\right)\left({\sqrt{L}\over t}\right)
 \eta_{\lambda'}  (\sqrt{L}\,)\big[\bchi_{\widetilde{B}_m} \varphi_k(\sqrt{L}\,)  f\big] \right\|_{{p'_0}}\\
 &\leq&
 C\delta^{1\over q}  2^{(j_0-j)N}
   2^{jn ({1\over p_0}-{1\over 2})} 2^{- \ell N} \mu(B_m)^{{1\over 2}-{1\over p_0}}   \big\|
 \eta_{\lambda'}  (\sqrt{L}\,)\big[\bchi_{\widetilde{B}_m} \varphi_k(\sqrt{L}\,)  f\big] \big\|_{2}.\nonumber
  \end{eqnarray}
On the other hand,
 \begin{align*} 
 \sum_{\lambda}   &\int_{I_\lambda}  \left(\sum_{\lambda'=\lambda-2^{\ell+6}}^{\lambda+2^{\ell+6}}   \big\|
 \eta_{\lambda'}  (\sqrt{L}\,)\big[\bchi_{\widetilde{B}_m} \varphi_k(\sqrt{L}\,)  f\big] \big\|_{2}\right)^2  {dt\over t}
  \leq  C2^{\ell}\left({2^k\delta\over 2^{k}}\right) \sum_{\lambda}      \sum_{\lambda'=\lambda-2^{\ell+6}}^{\lambda+2^{\ell+6}}   \big\|
 \eta_{\lambda'}  (\sqrt{L}\,)\big[\bchi_{\widetilde{B}_m} \varphi_k(\sqrt{L}\,)  f\big] \big\|^2_{2}
 \nonumber
 \\
   &\qquad\qquad\qquad\leq  C2^{2\ell} \delta    \sum_{\lambda' }   \big\|
 \eta_{\lambda'}  (\sqrt{L}\,)\big[\bchi_{\widetilde{B}_m} \varphi_k(\sqrt{L}\,)  f\big] \big\|^2_{2}
 \leq C2^{2\ell} \delta  \big\|
 \bchi_{\widetilde{B}_m} \varphi_k(\sqrt{L}\,)  f  \big\|^2_{2}.
  \end{align*}
This, together with estimates \eqref{e4.19} and \eqref{e4.20}, the fact that $1/r_0 +2/p'_0=1$ and
 $$
\|\bchi_{B_m} w\|_{{r_0}} \leq C\mu(B_m)^{ {2\over p_0}-1} \inf_{x\in   B_m } \mathfrak M_{r_0} w(x),
$$
show  that
 \begin{eqnarray*}
 I(j,k)
 &\leq &  C\delta^{{1\over q}+{1\over 2}}  2^{(j_0-j)N}
   2^{jn ({1\over p_0}-{1\over 2})}
   \sum_{\ell}     2^{- \ell (N-1)}
	\left(\sum_{m }   \mu(B_m)^{{1 }-{2\over p_0}} \|\bchi_{B_m} w\|_{{r_0}} \int_{\widetilde{B}_m}  \big|
  \varphi_k(\sqrt{L}\,)  f |^2 d\mu(x)\right)^{1/2}  \nonumber\\
	&\leq &C\delta^{{1\over q}+{1\over 2}}  2^{(j_0-j)N}
   2^{jn ({1\over p_0}-{1\over 2})}
 \left(\sum_m
    \int_{{\widetilde{B}_m}}
 |\bchi_{\widetilde{B}_m} \varphi_k(\sqrt{L}\,)  f(x)|^2 \mathfrak M_{r_0} w(x)d\mu(x) \right)^{1/2} \nonumber\\
	 &\leq &C\delta^{{1\over q}+{1\over 2}}  2^{(j_0-j)N}
   2^{jn ({1\over p_0}-{1\over 2})}    \left( \int_X |\varphi_k (\sqrt{L}\,)f(x)|^2  \mathfrak M_{r_0} w(x)d\mu(x)\right)^{1/2}.
  \end{eqnarray*}

  \noindent
  {\bf Estimate for $I\!I(j,k)$.}   Next we show bounds for  the term $ I\!I(j,k).$
For compactly supported function the $L^q$ norm is majorized by the supremum norm, so it follows from ${\rm (ST^{q}_{p_0, 2})}$ that
 \begin{align*} 
 \left\|\bchi_{B_m}
 \left (\psi_{\ell, \delta}\phi_{\delta,j}\right)\left({\sqrt{L}\over t}\right)\right\|_{2\to {p'_0} }
& =\left\| \left ({  \psi}_{\ell, \delta}\phi_{\delta,j}\right) \left({\sqrt{L}\over t}\right) \bchi_{B_m}   \right\|_{p_0\to 2}\nonumber\\
   &\leq C
\big(2^j(1+2^{\ell+2}\delta)\big)^{n ({1\over p_0}-{1\over 2})}\mu(B_m)^{{1\over 2}-{1\over p_0}}
   \|({  \psi}_{\ell, \delta}\phi_{\delta,j})\big((1+2^{\ell+2}\delta) \cdot\big)\|_{\infty}.
         \end{align*}	
From the definition of the function ${ \psi}_{\ell, \delta}$, it follows by \eqref{e4.13}  that,  for $\ell\geq [-{\rm log_2{\delta}}] +1$,
  \begin{eqnarray*}
   \|{  \psi}_{\ell, \delta}\phi_{\delta,j}\big((1+2^{\ell+2}\delta) \cdot\big)\|_{\infty}
   &\leq&
     C_N  2^{j-j_0}\big(2^{j+\ell} \delta\big)^{-N}
	\end{eqnarray*}
for any $N<\infty.$ Therefore,
 \begin{eqnarray}\label{eee}
 I\!I(j,k) &\leq &C
  \sum_{\ell= [-{\rm log_2{\delta}}]+1}^{\infty}   2^{j-j_0}\big(2^{j+\ell} \delta\big)^{n ({1\over p_0}-{1\over 2})-N}
  \left(\sum_m  \mu(B_m)^{{1 }-{2\over p_0}} \|\bchi_{B_m} w\|_{{r_0}}
 \|  \bchi_{\widetilde{B}_m} \varphi_k(\sqrt{L}\,)  f \|^2_{2}\right)^{1/2}\nonumber\\
 &\leq &C  \delta   2^{j[n ({1\over p_0}-{1\over 2})-N+1]} \left(\int_X |\varphi_k (\sqrt{L}\,)f(x)|^2  \mathfrak M_{r_0} w(x)d\mu(x)\right)^{1/2}.
  \end{eqnarray}

 Collecting the estimates of the terms $I(j,k)$ and $ I\!I(j,k)$,  together with  \eqref{e4.16}  and \eqref{e4.18},
we  arrive at the conclusion that
  \begin{eqnarray*}
 \int_X| T_{\delta}f(x)|^2 w(x)d\mu(x)
 &\leq&  C   \delta    \left(  \sum_{j\geq j_0 } \big( \delta^{1\over q} 2^{(j_0-j)N} +2^{-j(N-1)}\big)    2^{nj ({1\over p_0}-{1\over 2 })}\right)^{2}
 \sum_k \int_X |\varphi_k (\sqrt{L}\,)f(x)|^2  \mathfrak M_{r_0} w(x)dx\\
  &\leq&C\delta^{1+{2\over q}}  \delta^{-n(1-{2\over p_0})}       \int_{X} \sum_k| \varphi_k (\sqrt{L}\,)f(x)|^2 \mathfrak M_{r_0} w(x)d\mu(x)\nonumber\\
   &\leq&C\delta^{1+{2\over q}+n(1-{2\over p_0})}  \int_{X}   | f(x)|^2 \mathfrak M_{r_0} w(x)d\mu(x)
  \end{eqnarray*}
   whenever $N> n({1/p_0}-{1/2 }) +1$.  The last inequality follows by Proposition~\ref{prop2.5} for the weighted inequality for the
   square function, since $\mathfrak M_{r_0}w$ is an $A_1$ weight. This proves Lemma~\ref{le4.1} and   completes the proof of Theorem~\ref{th1.1}.
  \end{proof}

\section{Spectral  cluster  estimate and  maximal  bound
}
 \setcounter{equation}{0}

Throughout this section, we assume    that $(X, d, \mu)$ is
a metric measure space satisfying the conditions \eqref{eq1.1} or \eqref{eq2a}.


Let
$1\leq p< 2$ and $2\leq
q\leq\infty$.  Following \cite{COSY}, we say that $L$ satisfies the Sogge spectral cluster condition: If
 for a fixed natural number $\kappa$ and for all $N\in \NN$ and  all even
Borel functions $F$  such that\, $\supp F\subseteq [-N, N]$,
$$
\big\|F(\!\SL\,\,) \big\|_{p\to 2} \leq
CN^{n(\frac{1}{p}-\frac{1}{2})}\| F (N \cdot) \|_{N^\kappa,\, q}
\leqno{\rm (SC^{q, \kappa}_{p})}
$$
for all $x\in X$   where
$$
\|F\|_{N,q}=\left({1\over 2N}\sum_{\ell=1-N}^{N} \sup_{\lambda\in
	[{\ell-1\over N}, {\ell\over N})} |F(\lambda)|^q\right)^{1/q}
$$
for $F$  supported in $[-1, 1]$.  For $q=\infty$, we may put
$\|F\|_{N, \infty}=\|F\|_{{\infty}}$ (see also \cite{CowS, DOS}).

Both {  Theorems B} and {  C} stated in Introduction are a  special case of the following statement with $q=2$.

\begin{thm}\label{th1.2}
	Suppose  that     $L$ satisfies  the  property ${\rm (FS)}$
	and the condition  ${\rm (SC^{q,\kappa}_{p_0})}$ for some $1\leq p_0<2$, $2\leq q\leq \infty $ and  for  some
	$\kappa\in \mathbb N$. In addition,   we assume that there exists $\nu \ge 0$  such that  \eqref{e1.500} holds.
	Then the operator $S_{\ast}^{\alpha}(L)$ is bounded on $L^p(X)$ whenever 
 \begin{eqnarray}\label{eqwww} 	
 2\leq p< p_0',  \ \ \ {\rm and }\ \ \  \alpha> \nu+ 	\max\left\{n\left({\frac {1}{p_0}}-{\frac 12}\right)- {\frac 1q}, \, 0 \right\}. 	
 \end{eqnarray}
 
	As a consequence, if $f\in L^p(X)$, then for $p$ and $\alpha$ satisfying  \eqref{eqwww},
	$$
	\lim\limits_{R\to \infty}S_{R}^{\alpha}(L)f(x)=f(x), \ \ \ a.e.
	$$
	
\end{thm}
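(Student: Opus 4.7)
The plan is to run the argument of Section~\ref{sec4} with the restriction condition ${\rm (ST^{q}_{p_0,2})}$ replaced by the cluster condition ${\rm (SC^{q,\kappa}_{p_0})}$. The coarser discretization scale $N^{-\kappa}$ in the cluster norm produces a loss that is to be absorbed by the Sobolev-type hypothesis \eqref{e1.500}, which is precisely why the threshold in \eqref{eqwww} is shifted by $\nu$. Using the identity \eqref{e4.2} and the dyadic decomposition \eqref{e4.3}--\eqref{e4.5} exactly as before, with the low-frequency piece handled by (a suitably adapted version of) Lemma~\ref{le3.1}, the theorem reduces to proving the square function bound
\begin{equation*}
\|T_\delta f\|_p \le C\, \delta^{\frac12 + \frac1q - \nu + n(\frac12 - \frac{1}{p_0})}\|f\|_p,\qquad 2\le p<p_0',\ 0<\delta\le 1,
\end{equation*}
with $T_\delta$ as in \eqref{e4.7}. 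The case $p=2$ follows from the spectral theorem as in \eqref{einter}, and the hypothesis $\nu\ge 0$ together with the analog of \eqref{nontrivial} ensures that the required power of $\delta$ does not exceed $1/2$.

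For the range $2<p<p_0'$ I would follow Lemma~\ref{le4.1} and establish the weighted inequality $\int |T_\delta f|^2 w\, d\mu \le C\, \delta^{1+2/q - 2\nu + n(1-2/p_0)}\int |f|^2\,\mathfrak M_{r_0}w\, d\mu$ with $1/r_0 = 2/p_0-1$; combined with Proposition~\ref{prop2.5} and the $A_1$ property of $\mathfrak M_{r_0}w$ this yields the $L^p$ bound by duality. The proof of the weighted estimate follows the template of Section~\ref{sec4} verbatim: Littlewood--Paley decompose $f=\sum_k\varphi_k(\sqrt L)f$, Fourier-split $\phi_\delta$ via \eqref{e4.12}--\eqref{e4.14} so that each $\phi_{\delta,j}(\sqrt L/t)$ is supported in $\D_{2^{j+1}/t}$ by finite speed of propagation, use the finite-overlap covering \eqref{kk} to reduce to a sum of local contributions, and apply H\"older as in \eqref{e4.17}--\eqref{e4.18}. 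The summation in $j$, $\ell$, $k$ is then parallel to \eqref{e4.20}--\eqref{eee}, and all the new content lies in the local bound for $\|\bchi_{B_m}(\psi_{\ell,\delta}\phi_{\delta,j})(\sqrt L/t)\|_{p_0\to 2}$.

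In the restriction case this local bound came from a direct application of ${\rm (ST^{q}_{p_0,2})}$ in \eqref{e4.19}, producing an $L^q$-norm of the rescaled multiplier. With ${\rm (SC^{q,\kappa}_{p_0})}$ in hand one gets instead the $(N^\kappa,q)$-norm from \eqref{norm-def} with $N\sim t(1+2^{\ell+2}\delta)$, which matches the $L^q$-norm only when the effective bandwidth $2^\ell\delta$ of the multiplier exceeds $t^{-\kappa}$; in the complementary regime $2^\ell\delta\lesssim t^{-\kappa}$ the discretization introduces an extra loss of order $(\delta t^\kappa)^{(\kappa-1)(1/p_0-1/2)}$ relative to the restriction setting, compounded further by a polynomial $t^{\kappa\nu}$ when one interpolates against the spectral size. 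I propose to absorb this loss by factoring
\begin{equation*}
(\psi_{\ell,\delta}\phi_{\delta,j})(\tfrac{\sqrt L}{t}) = \bigl[(1+L)^{\gamma/2}(\psi_{\ell,\delta}\phi_{\delta,j})(\tfrac{\sqrt L}{t})\bigr]\circ (1+L)^{-\gamma/2},
\end{equation*}
applying the Sobolev bound $\|(1+L)^{-\gamma/2}\|_{p_0'\to 2}\le C$ of \eqref{e1.500} on the adjoint/input side, and controlling the bracketed $L^2\to L^2$ factor by $Ct^\gamma$ via the spectral theorem since $\psi_{\ell,\delta}\phi_{\delta,j}$ is supported near $1$. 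The precise value $\gamma = n(\kappa-1)(1/p_0-1/2) + \kappa\nu$ appearing in \eqref{e1.500} is calibrated so that, after summing the two regimes, the total loss compared to Section~\ref{sec4} is exactly $\delta^{-\nu}$ in the square function bound, hence $\delta^{-\nu/2}$ per occurrence of $T_\delta$, producing the shift by $\nu$ in \eqref{eqwww}.

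\textbf{Main obstacle.} The delicate point is the two-regime bookkeeping just described: one must verify that the cluster bound in the ``good'' regime $2^\ell\delta\gtrsim t^{-\kappa}$ and the Sobolev-absorbed bound in the ``bad'' regime combine, after summation in $j$ and $k$ (the latter via Proposition~\ref{prop2.5}), to produce precisely the $\delta$-power claimed in the square function estimate. Checking that the value of $\gamma$ in \eqref{e1.500} is calibrated sharply --- not so large as to weaken the result nor so small as to break the argument --- is the technical heart of the proof.
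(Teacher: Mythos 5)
Your overall numerology is right and you correctly identify that the loss relative to Section~\ref{sec4} must be tamed by the Sobolev hypothesis \eqref{e1.500}, but the mechanism you propose does not work, and the paper's proof is organized quite differently precisely to circumvent this obstruction.

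You propose to prove a single weighted inequality $\int |T_\delta f|^2 w\,d\mu \le C\delta^{1+2/q-2\nu+n(1-2/p_0)}\int|f|^2\mathfrak M_{r_0}w\,d\mu$ for the full square function $T_\delta$, absorbing the $\nu$-loss by factoring each piece $(\psi_{\ell,\delta}\phi_{\delta,j})(\sqrt L/t)$ through $(1+L)^{\gamma/2}\circ(1+L)^{-\gamma/2}$ and applying \eqref{e1.500} ``on the adjoint/input side.'' This is incompatible with the Calder\'on--Zygmund structure of the weighted argument. The whole point of the localization in Lemma~\ref{le4.1} --- the finite-overlap covering $\{\widetilde B_m\}$ at scale $\rho=2^{j-k+2}$ and the finite speed of propagation guaranteeing $\supp K_{\phi_{\delta,j}(\sqrt L/t)}\subseteq \D_{2^{j+1}/t}$ --- is to reduce matters to estimates of the form $\|\bchi_{B_m}(\psi_{\ell,\delta}\phi_{\delta,j})(\sqrt L/t)\bchi_{\widetilde B_m}\cdots\|_{p_0'}$ with both cutoffs present, so that $\|\bchi_{B_m}w\|_{r_0}$ can be pulled out and dominated by $\mu(B_m)^{2/p_0-1}\inf_{B_m}\mathfrak M_{r_0}w$. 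The operator $(1+L)^{-\gamma/2}$ is nonlocal: inserting it between the two indicator cutoffs destroys this structure, and moving it outside the cutoffs requires an $L^2\to L^{p_0'}$ smoothing estimate whose operator norm grows in $t$, which is fatal in the unbounded $t$-range. There is no way to make your factorization coexist with the ball decomposition.

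The paper instead splits $T_\delta = T_\delta^{(1)}+T_\delta^{(2)}+T_\delta^{(3)}$ according to $t\in(0,1)$, $t\in(1,\delta^{-1/\kappa})$, and $t\in(\delta^{-1/\kappa},\infty)$ (see \eqref{e5.2}). For $T_\delta^{(1)}$ and $T_\delta^{(2)}$ --- the regimes $t^\kappa\delta\lesssim 1$ where the cluster norm is genuinely coarser than the bandwidth and your ``bad regime'' occurs --- the paper does \emph{not} use the weighted argument at all; Lemma~\ref{le5.1} bounds $\|T_\delta^{(i)}f\|_{p_0'}$ directly by Minkowski and an $L^2\to L^{p_0'}$ estimate $\|\Phi_{t,\delta}(\sqrt L)(1+L)^{\gamma/2}\|_{2\to p_0'}\le C\delta^{1/q-n(1/p_0-1/2)-\nu}$ proved via ${\rm (SC^{q,\kappa}_{p_0})}$, interpolating against the free $L^2$ bound \eqref{einter}. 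Here $(1+L)^{-\gamma/2}$ is applied once, globally, to $f$, so \eqref{e1.500} is used without any conflict with localization, and the boundedness of $t$ makes the polynomial factor $t^\gamma$ (and the support constraint $\supp\Phi_{t,\delta}\subseteq[-N,N]$, $N\sim t$) harmless. For $T_\delta^{(3)}$ the relation $t^\kappa\delta\ge 1$ makes the cluster discretization fine enough that ${\rm (SC^{q,\kappa}_{p_0})}$ behaves like the restriction estimate with no $\nu$-loss (see \eqref{e5.13}--\eqref{e5.15}), so Lemma~\ref{le5.5} runs the weighted argument of Section~\ref{sec4} essentially verbatim, producing the same $\delta^{1+2/q+n(1-2/p_0)}$ power as in Lemma~\ref{le4.1}. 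The shift by $\nu$ in \eqref{eqwww} comes exclusively from the first two pieces.

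So your proposal identifies the right enemy (the coarse cluster discretization for small/medium $t$) and the right tool ($(1+L)^{-\gamma/2}$ and the calibration of $\gamma$), but tries to deploy the tool inside a framework that cannot accommodate it. The correct move is to switch to a global, non-weighted $L^{p_0'}$ estimate plus interpolation on exactly those scales where the Sobolev factor is required.
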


\begin{rem}
Note that if $(X,d,\mu)$ satisfies \eqref{eq2a},	 then
by H\"older's inequality and $(1+L)^{0}=Id$, the condition \eqref{e1.500}   holds with $\gamma=0$.
\end{rem}

\begin{rem}
	Taking into account the condition \eqref{st}	one could consider the following estimate introduced in \cite{COSY}:
	$$
	\big\|F(\!\SL\,\,)\bchi_{B(x, r)} \big\|_{p\to 2} \leq
	CV(x,r)^{{1\over s}-{1\over p}}(Nr)^{n({1\over p}-{1\over s})}\| F (N \cdot) \|_{N^\kappa,\, q}.
	$$
	However, one can easily check that the above condition under assumption \eqref{eq1.1} or \eqref{eq2a} is equivalent to ${\rm (SC^{q, \kappa}_{p})}$,
	so here we only discuss the latter only.
\end{rem}

\begin{rem} \label{rem5.4}
Note that condition ${\rm (SC^{q,\kappa}_{p_0})}$ is weaker than ${\rm (ST^{q}_{p_0, 2})}$ and we need  a priori estimate
\eqref{e1.500} in Theorem~\ref{th1.2}. Recall that in \cite[Theorem I.10]{COSY},
one can obtain  $L^p$ bounds for
Bochner-Riesz means  under the assumption  ${\rm (AB_{p_0}^{q,\kappa})}$ instead of estimate \eqref{e1.500}.
Following \cite{COSY}, we say that
  $L$ satisfies the condition  ${\rm (AB_{p_0}^{q,\kappa})}$ if for each $\varepsilon>0$, there exists constant $C_\varepsilon>0$ such that for
all $N\in \mathbb{N}$ and even  Borel functions $H$ with $\supp H\subseteq [-N, N]$,
$$
\big\|H(\SL) \big\|_{p_0\to p_0} \leq
 C_\varepsilon N^{\kappa n({1\over p_0}-{1\over 2})+\varepsilon}\| H (N \cdot) \|_{N^\kappa,\, q}.
\leqno{{\rm (AB_{p_0}^{q,\kappa})}}
$$
(see  also \cite[Theorem 3.6]{CowS} and \cite[Theorem 3.2]{DOS} for related results).
Once  \eqref{e1.500} is proved for some $p_0\in [1, 2 )$ and all $\nu>0$,  it  is not difficult to check  that ${\rm (SC^{q,\kappa}_{p_0})}$ implies ${\rm (AB_{p_0}^{q,\kappa})}$. Indeed,
we apply   \eqref{e1.500} and ${\rm (SC^{q,\kappa}_{p_0})}$ to obtain
\begin{eqnarray*}
\big\|H(\SL) \big\|_{p_0\to p_0}&\leq& \big\|H(\SL)(1+L)^{{n\over 2}({1\over p}-{1\over 2})(\kappa-1)+\varepsilon} \big\|_{p_0\to 2}
\big\| (1+L)^{-{n\over 2}({1\over p_0}-{1\over 2})(\kappa-1)-\varepsilon} \big\|_{2\to p_0}\\
&\leq& C_\varepsilon N^{n({1\over p_0}-{1\over 2})}\big\| H (N \cdot)(1+N^2\cdot)^{{n\over 2}({1\over p_0}-{1\over 2})(\kappa-1)+\varepsilon} \big\|_{N^\kappa,\, q}\\
&\leq& C_\varepsilon  N^{\kappa n({1\over p_0}-{1\over 2})+\varepsilon} \|  H(N \cdot) \|_{N^\kappa,\, q}.
\end{eqnarray*}
This verifies the condition  ${\rm (AB_{p_0}^{q,\kappa})}$.
\end{rem}

From  Remark~\ref{rem5.4}, it is easy to see that  the same  argument as in  \cite[Theorem  I.10, Corollary  I.7]{COSY} gives   the following. 
\begin{prop}  \label{prop5555}
	Under the same assumption as in Theorem \ref{th1.2}, we have the uniform bound 
	$$ 
\big\|\big(I-{L\over R^2}\big)_+^{\alpha}\big\|_{p_0\to p_0}\leq C, 
$$ 
 for  ${\alpha}> \nu+ n(1/p_0-1/2)-1/q$ and $R>0$. 
  As a consequence, if ${1/q}> n({1/p_0}-{1/2}) +\nu$ for some $q\geq 2$ and $1\leq p_0<2$, then  $L=0.$
\end{prop}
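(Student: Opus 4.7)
\emph{Proof plan.} The plan is to derive a slightly modified version of the condition $({\rm AB}_{p_0}^{q,\kappa})$ discussed in Remark~\ref{rem5.4} and then run the argument of \cite[Theorem I.10, Corollary I.7]{COSY} essentially verbatim. Specifically, I would apply the factorization $H(\SL) = [H(\SL)(1+L)^{\gamma/2}](1+L)^{-\gamma/2}$ with $\gamma = n(\kappa-1)(1/p_0 - 1/2) + \kappa\nu$, bound the second factor by the dual of \eqref{e1.500}, and apply ${\rm (SC^{q,\kappa}_{p_0})}$ to $\tilde H(\lambda) = H(\lambda)(1+\lambda^2)^{\gamma/2}$ together with the pointwise bound $(1+N^2\mu^2)^{\gamma/2} \le 2^{\gamma/2} N^\gamma$ on $[-1,1]$. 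This yields, for every even Borel $H$ supported in $[-N, N]$,
\begin{equation*}
\|H(\SL)\|_{p_0 \to p_0} \le C\, N^{\kappa n(1/p_0 - 1/2) + \kappa\nu}\, \|H(N\cdot)\|_{N^\kappa, q},
\end{equation*}
which is exactly $({\rm AB}_{p_0}^{q,\kappa})$ with an extra $\kappa\nu$ in the exponent.

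With this inequality in hand, I would feed it into the dyadic Bochner-Riesz argument of \cite[Theorem I.10]{COSY}: decompose $(1-\lambda^2/R^2)_+^\alpha$ as in \eqref{e4.3}, apply the modified AB-type inequality (combined with Proposition~\ref{prop2.4}) to each annular piece $2^{-k\alpha}\phi_k^\alpha(\lambda/R)$, and sum the resulting geometric series. Since the COSY argument produces a uniform Bochner-Riesz bound for $\alpha > n(1/p_0 - 1/2) - 1/q$ when the AB exponent is $\kappa n(1/p_0 - 1/2) + \varepsilon$, the same argument applied to our modified AB exponent $\kappa n(1/p_0 - 1/2) + \kappa\nu$ raises the required threshold by exactly $\nu$, giving $\alpha > \nu + n(1/p_0 - 1/2) - 1/q$ as claimed. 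The main technical obstacle I expect is the bookkeeping in this last step: one must verify carefully that the extra $\kappa\nu$ in the AB exponent propagates to only $\nu$ (rather than $\kappa\nu$) in the final Bochner-Riesz threshold.

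For the consequence $L = 0$ when $1/q > n(1/p_0 - 1/2) + \nu$, note that the hypothesis forces $n(1/p_0 - 1/2) - 1/q < 0$. Apply ${\rm (SC^{q,\kappa}_{p_0})}$ directly to the even indicator $F = \bchi_{(R_1, R_2]} + \bchi_{[-R_2, -R_1)}$ for arbitrary $0 \le R_1 < R_2$ and any $N > R_2$. An elementary counting argument, using that $|\supp F(N\cdot)| = 2(R_2 - R_1)/N$ and $|F| \le 1$, gives $\|F(N\cdot)\|_{N^\kappa, q} \le C((R_2 - R_1)/N)^{1/q}$, so that
\begin{equation*}
\|\bchi_{(R_1^2, R_2^2]}(L)\|_{p_0 \to 2} = \|F(\SL)\|_{p_0 \to 2} \le C N^{n(1/p_0 - 1/2) - 1/q}(R_2 - R_1)^{1/q} \longrightarrow 0
\end{equation*}
as $N \to \infty$. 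Hence $\bchi_{(R_1^2, R_2^2]}(L) = 0$ for all $R_1 < R_2$, which forces $\sigma(L) \subseteq \{0\}$ and therefore $L = 0$.
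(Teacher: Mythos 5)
Your proposal follows the paper's approach closely. The paper's own proof is a one-line reference to Remark~\ref{rem5.4} and to \cite[Theorem I.10, Corollary I.7]{COSY}, and your factorization $H(\SL) = [H(\SL)(1+L)^{\gamma/2}](1+L)^{-\gamma/2}$, combined with ${\rm (SC^{q,\kappa}_{p_0})}$ and \eqref{e1.500}, is exactly the calculation recorded in Remark~\ref{rem5.4}, carried out here with the single fixed $\nu\ge 0$ from the hypothesis rather than with an arbitrarily small $\nu>0$. The bookkeeping concern you flag is legitimate but not a gap: the paper tacitly relies on $\kappa n(1/p_0-1/2)+\kappa\nu = \kappa\bigl[n(1/p_0-1/2)+\nu\bigr]$, so that feeding the modified AB exponent into \cite[Theorem I.10]{COSY} amounts to replacing $n(1/p_0-1/2)$ by $n(1/p_0-1/2)+\nu$ throughout and raising the Bochner--Riesz threshold by exactly $\nu$; this is consistent with the stated conclusion but is nowhere written out in the present paper, so your caution is appropriate. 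For the consequence $L=0$, your direct argument --- applying ${\rm (SC^{q,\kappa}_{p_0})}$ to the even indicator $F$, computing $\|F(N\cdot)\|_{N^\kappa,q}\le C N^{-1/q}$ so that the right-hand side tends to zero, and concluding $\chi_{(R_1^2,R_2^2]}(L)=0$ --- is clean and self-contained; it in fact proves the conclusion already under the weaker hypothesis $1/q > n(1/p_0-1/2)$, and is more transparent than deducing $L=0$ through the uniform Bochner--Riesz bound, as the phrase ``as a consequence'' and the citation of Corollary I.7 in \cite{COSY} suggest the authors had in mind.
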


This shows that  we may assume the condition
\begin{equation}\label{nontrivial2}
{1\over q}+n\Big({1\over 2}- {1\over p_0}\Big)-\nu\le 0. 
\end{equation}
Because,  otherwise, $L=0$ and Theorem \ref{th1.2} is trivially true. We assume the condition \eqref{nontrivial2} for the rest of this section.   As in Theorem~\ref{th1.1},  Theorem~\ref{th1.2} is a consequence of the following.

\begin{prop}\label{prop5.1}
 Suppose the operator $L$ satisfies the  property {\rm (FS)}
and the condition  ${\rm (SC^{q,\kappa}_{p_0})}$ for some $p_0$ such that  $1\leq p_0<2$, $2\leq q\leq \infty$ and some positive integer $\kappa$.
In addition, assume that there exists $\nu \ge 0$  such that    \eqref{e1.500} holds.
Let $\phi$ be a fixed $C^{\infty}$ function supported in $[-1/2, 1/2], |\phi|\leq 1$.
  Recall that for every $0<\delta\le 1$ $T_\delta$ is defined by (4.6).
Then for all $2\leq p<p'_0 $ and $0<\delta\leq 1$,
 \begin{eqnarray}\label{e5.00}
   \|T_{\delta}f\|_{p}
  &\leq& C(p)\delta^{{1\over 2}+{1\over q}+n({1\over 2}- {1\over p_0})-\nu}   \|f\|_{p}.
  \end{eqnarray}
\end{prop}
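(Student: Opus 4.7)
The plan is to follow the strategy of Proposition \ref{prop4.1} / Lemma \ref{le4.1} verbatim, replacing the single use of the restriction condition ${\rm (ST^{q}_{p_0,2})}$ by a combined application of the cluster condition ${\rm (SC^{q,\kappa}_{p_0})}$ and the a priori regularity bound \eqref{e1.500}.

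First, $p=2$ is immediate from the spectral theorem: as in \eqref{einter} we have $\|T_\delta\|_{2\to 2}\leq C\delta^{1/2}$, and by \eqref{nontrivial2} the target exponent in \eqref{e5.00} is at most $1/2$, so $\delta^{1/2}\leq \delta^{1/2+1/q+n(1/2-1/p_0)-\nu}$. For $2<p<p_0'$, by duality with $w\in L^r$ where $1/r+2/p=1$, together with the $L^r$-boundedness of $\mathfrak{M}_{r_0}$ (available because $r_0<r$), it suffices to prove the weighted $L^2$ inequality
$$
\int_X|T_\delta f(x)|^2w(x)\,d\mu(x)\leq C\,\delta^{\,1+2/q+n(1-2/p_0)-2\nu}\int_X|f(x)|^2\mathfrak{M}_{r_0}w(x)\,d\mu(x),
$$
with $r_0=p_0/(2-p_0)$, exactly paralleling the way Lemma \ref{le4.1} yields Proposition \ref{prop4.1}.

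For this weighted inequality I would preserve verbatim all the geometric and spectral decompositions used in the proof of Lemma \ref{le4.1}: the Littlewood--Paley cut-offs $\varphi_k(\sqrt L)$, the Fourier-side expansion $\phi_\delta=\sum_{j\geq j_0}\phi_{\delta,j}$ of \eqref{e4.14}, the radial partition $\{\psi_{\ell,\delta}\}$, the ball cover $\{\widetilde B_m\}$ justified by the kernel support \eqref{e4.15} coming from finite speed of propagation, and the finer partition $\{\eta_{\lambda'}(\sqrt L)\}$ inside each interval $I_\lambda$. All of these steps are purely metric/spectral and transfer without change. The only piece that has to be modified is the single-scale bound \eqref{e4.19} on $\|\bchi_{B_m}(\psi_{\ell,\delta}\phi_{\delta,j})(\sqrt L/t)\|_{2\to p_0'}$; the rest of the argument, in particular the cancellation between $\|\bchi_{B_m}w\|_{r_0}$ and the ball-volume factor and the eventual appeal to Proposition \ref{prop2.5} with the $A_1$ weight $\mathfrak{M}_{r_0}w$, then follows mechanically.

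The main obstacle is that ${\rm (SC^{q,\kappa}_{p_0})}$, unlike ${\rm (ST^{q}_{p_0,2})}$, supplies no built-in volume factor and uses the discrete norm $\|\cdot\|_{N^\kappa,q}$, which loses against the continuous $L^q$ norm when the multiplier's essential width $2^\ell\delta$ is smaller than the discretization scale $N^{-\kappa}\sim t^{-\kappa}$. The plan to circumvent this is to factor
$$
(\psi_{\ell,\delta}\phi_{\delta,j})(\sqrt L)=\bigl[(\psi_{\ell,\delta}\phi_{\delta,j})(s)(1+s^2)^{\gamma/2}\bigr](\sqrt L)\cdot (1+L)^{-\gamma/2}
$$
and apply ${\rm (SC^{q,\kappa}_{p_0})}$ to the bracketed multiplier (which has the same frequency support) while using \eqref{e1.500} together with its dual $(1+L)^{-\gamma/2}\colon L^2\to L^{p_0}$ to absorb the tail factor $(1+L)^{-\gamma/2}$. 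On the support of $\psi_{\ell,\delta}\phi_{\delta,j}$ the weight $(1+s^2)^{\gamma/2}$ is of size $\sim t^\gamma$, so the bracketed multiplier contributes an extra $t^\gamma$; the calibration $\gamma=n(\kappa-1)(1/p_0-1/2)+\kappa\nu$ of \eqref{e1.500} is precisely arranged so that, after the ensuing sum in $j$ (controlled by the tail bounds \eqref{e4.13}) and the recovery of the ball factor $\mu(B_m)^{1/p_0'-1/2}=\mu(B_m)^{1/2-1/p_0}$ via the H\"older split, the excess over the restriction-case bound collapses to $\delta^{-2\nu}$ on the right-hand side. Once this key single-scale estimate is established, summing in $\ell$, $j$, $\lambda$, $m$ and finally $k$ (using Proposition \ref{prop2.5}) proceeds line by line as in Lemma \ref{le4.1}, yielding the weighted $L^2$ inequality with the claimed exponent $1+2/q+n(1-2/p_0)-2\nu$ and hence Proposition \ref{prop5.1}.
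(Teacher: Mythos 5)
Your proposal deviates substantially from the paper's actual argument, and there is a genuine gap at the step you call ``mechanical.''

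The core problem is that the weighted $L^2$ machinery of Lemma~\ref{le4.1} is intrinsically local: it depends on the fact that, by finite speed of propagation, $\phi_{\delta,j}(\sqrt{L}/t)$ has kernel supported in the thin set $\D_{2^{j+1}/t}$ (equation \eqref{e4.15}). That support information is what justifies the ball decomposition $\{\widetilde B_m\}$, the representation $\phi_{\delta,j}(\sqrt{L}/t)=\sum_{\ell,m:\,d(x_\ell,x_m)<2d_j}\bchi_{\widetilde B_\ell}\phi_{\delta,j}(\sqrt{L}/t)\bchi_{\widetilde B_m}$, and ultimately the cancellation $\|\bchi_{B_m}w\|_{r_0}\,\mu(B_m)^{1-2/p_0}\le C\inf_{B_m}\mathfrak M_{r_0}w$ after summing over $m$. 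When you factor $(\psi_{\ell,\delta}\phi_{\delta,j})(\sqrt L/t)=\bigl[(\psi_{\ell,\delta}\phi_{\delta,j})(s)(1+t^2s^2)^{\gamma/2}\bigr](\sqrt L/t)\cdot(1+L)^{-\gamma/2}$, the operator $(1+L)^{-\gamma/2}$ has a slowly decaying, non-compactly-supported kernel, so no matter where you place it in the product $\bchi_{B_m}(\cdots)\bchi_{\widetilde B_m}\varphi_k(\sqrt L)f$, you destroy the support localization that makes the summation over $m$ close. There is also no localized analogue of \eqref{e1.500} to use here: \eqref{e1.500} is a global $L^{p_0'}\to L^2$ bound, and it is only useful if the $L^{p_0'}$ norm is taken over all of $X$. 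Moreover, even your claimed target exponent is suspect: the extra $t^\gamma$ factor you gain from the weight $(1+t^2s^2)^{\gamma/2}$ is only controlled by $\delta^{-\nu}$ when $t\le\delta^{-1/\kappa}$; for large $t$ it is far worse, so a single weighted inequality with the uniform loss $\delta^{-2\nu}$ cannot be established over all of $t\in(0,\infty)$.

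What the paper actually does is split $T_\delta=T_\delta^{(1)}+T_\delta^{(2)}+T_\delta^{(3)}$ according to $t\in(0,1)$, $(1,\delta^{-1/\kappa})$, $(\delta^{-1/\kappa},\infty)$. On the first two ranges, where the a priori bound \eqref{e1.500} is needed, the paper does not use the weighted Calder\'on--Zygmund approach at all: it proves a global $L^{p_0'}\to L^{p_0'}$ bound directly, inserting $(1+L)^{\gamma/2}(1+L)^{-\gamma/2}$ at the level of operator norms (via the key estimate \eqref{l-estimate} and the $L^{p_0'}\to L^2$ bound from \eqref{e1.500}), and then interpolates with the trivial $L^2$ bound. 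Only on the third range, where $t\ge\delta^{-1/\kappa}$ so that $R^\kappa\delta\gtrsim 1$ and the discrete norm $\|\cdot\|_{R^\kappa,q}$ no longer loses against the continuous $L^q$ norm, is the weighted $L^2$ estimate of Lemma~\ref{le4.1} reproduced --- and there the bound is $\delta^{1/2+1/q+n(1/2-1/p_0)}$ with no $\nu$-loss at all, because \eqref{e1.500} is never invoked. So the extra factor $\delta^{-\nu}$ enters only through the unweighted part of the argument, not through a modification of the weighted lemma. Your plan to fold everything into a single weighted estimate does not survive the non-locality of $(1+L)^{-\gamma/2}$ and the $t$-dependent quality of the discrete cluster norm.
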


 \medskip

The estimate \eqref{e5.00} for $p=2$ follows from  \eqref{einter} and the condition  \eqref{nontrivial2}.  
 To show \eqref{e5.00}  for $2< p<p'_0$, for $0<\delta\leq 1$
we write
\begin{eqnarray}\label{e5.2}
T_\delta f(x)=\left( \int_0^{\infty} \Big|\phi\left(\delta^{-1}\left(1-{ {L}\over t^2} \right) \right)f(x)\Big|^2
   {dt\over t}\right)^{1/2}\leq T_\delta^{(1)}f(x)+T_\delta^{(2)}f(x)+T_\delta^{(3)}f(x),
  \end{eqnarray}
where
\begin{eqnarray*}
T_\delta^{(1)}f(x):&=&\left( \int_0^{1} \Big|\phi\left(\delta^{-1}\left(1-{ {L}\over t^2} \right) \right)f(x)\Big|^2
   {dt\over t}\right)^{1/2},\nonumber\\
T_\delta^{(2)}f(x):&=&\left( \int_1^{1/\sqrt[\kappa]\delta} \Big|\phi\left(\delta^{-1}\left(1-{ {L}\over t^2} \right) \right)f(x)\Big|^2
   {dt\over t}\right)^{1/2},\nonumber\\
T_\delta^{(3)}f(x):&=&\left( \int_{1/\sqrt[\kappa]\delta}^\infty \Big|\phi\left(\delta^{-1}\left(1-{ {L}\over t^2} \right) \right)f(x)\Big|^2
   {dt\over t}\right)^{1/2}.
\end{eqnarray*}

It is clear  that  to prove Proposition~\ref{prop5.1} it is sufficient to show  the following
Lemmas~\ref{le5.1}  and ~\ref{le5.3}.

\begin{lemma} \label{le5.1}
 Suppose the operator $L$ satisfies  the property {\rm (FS)}
and condition  ${\rm (SC^{q,\kappa}_{p_0})}$ for some $p_0$ such that  $1\leq p_0<2$, $2\leq q\leq \infty $ and some $\kappa\in {\mathbb N}^+$.
In addition, we assume that \eqref{e1.500} holds for some $\nu\geq0$.
Then for all $2\leq p\leq p'_0 $ and   $0<\delta\le 1$, we have
\begin{eqnarray}
\label{t1}
\|T_\delta^{(1)}f\|_{{p}}\leq C\delta^{1/2}\|f\|_{{p}}
  \end{eqnarray}
and
\begin{eqnarray}
\label{t2}
\|T_\delta^{(2)}f\|_{{p}}\leq C\delta^{{1\over 2}+{1\over q}+n({1\over 2}- {1\over p_0})-\nu} \|f\|_{{p}}.
  \end{eqnarray}
\end{lemma}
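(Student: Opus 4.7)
The $L^2$ bounds $\|T_\delta^{(i)} f\|_2 \le C\delta^{1/2}\|f\|_2$ for $i = 1, 2$ follow at once from \eqref{einter} together with \eqref{nontrivial2}, so the task is to upgrade these to $L^p$-bounds for $2 < p < p_0'$. In both cases the plan is to establish a weighted $L^2$ inequality of the form $\int_X |T_\delta^{(i)} f|^2\, w\, d\mu \le A_i^2 \int_X |f|^2\, \mathfrak M_{r_0} w\, d\mu$ with $r_0$ as in Section 4, and then conclude by duality against $\|w\|_{(p/2)'} \le 1$ exactly as at the end of the proof of Lemma 4.1, using that $\mathfrak M_{r_0} w$ is an $A_1$ weight and Proposition 2.5 to control the Littlewood--Paley square function that appears.

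\textbf{Estimate \eqref{t1}.} The key is that for $t \in (0,1)$ the spectral support of $\phi(\delta^{-1}(1-L/t^2))$ lies in $L \in [0,2]$. Fixing $\tilde\chi \in C_c^\infty(\mathbb R)$ with $\tilde\chi \equiv 1$ on $[0,2]$, I may freely insert $\tilde\chi(L)$ on both sides of $\phi(\delta^{-1}(1-L/t^2))$. The condition $(SC^{q,\kappa}_{p_0})$ applied with $N$ bounded yields $\|\tilde\chi(L)\|_{p_0 \to 2} \le C$, and combining this with \eqref{e1.500} through the factorization $\tilde\chi(L) = [\tilde\chi(L)(1+L)^{\gamma/2}](1+L)^{-\gamma/2}$ (Proposition 2.4 controls the smooth compactly supported factor; interpolation with $L^2 \to L^2$ then closes the argument) yields $\|\tilde\chi(L)\|_{p \to 2} \le C$ and dually $\|\tilde\chi(L)\|_{2 \to p} \le C$ for $2 \le p \le p_0'$. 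This spectral compactness collapses every $j$- and $\ell$-sum in the Calder\'on--Zygmund scheme of Lemma 4.1 to a single finite-scale band, giving the weighted inequality with $A_1^2 = C\delta$, which is \eqref{t1}.

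\textbf{Estimate \eqref{t2}.} I adapt the proof of Lemma 4.1 almost verbatim, with $(SC^{q,\kappa}_{p_0})$ replacing $(ST^q_{p_0,2})$. Introducing the Littlewood--Paley decomposition $\sum_k \varphi_k(\sqrt L) = I$ from \eqref{e44} and the expansion $\phi(\delta^{-1}(1-\cdot^2/t^2)) = \sum_{j \ge j_0}\phi_{\delta,j}(\cdot/t)$ from \eqref{e4.14}, Lemma 2.2 localizes each $\phi_{\delta,j}(\sqrt L/t)$ to spatial scale $2^{j+1}/t$. A dyadic $\rho$-net at scale $\rho = 2^{j+1}/t$ combined with the band decomposition \eqref{psi} reduces matters, as in \eqref{e4.17}--\eqref{e4.18}, to bounding the $L^{p_0}\to L^2$ norm of $\chi_{B_m}(\psi_{\ell,\delta}\phi_{\delta,j})(\sqrt L/t)$. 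Applying $(SC^{q,\kappa}_{p_0})$ with $N \asymp t$ (which explains why $T_\delta^{(2)}$ is restricted to $t \ge 1$, so that $N \ge 1$) yields a bound in terms of the discrete norm $\|\cdot\|_{N^\kappa, q}$; for $t \in (1, \delta^{-1/\kappa})$ this matches the continuous bound \eqref{psidel} up to a polynomial loss in $N$, precisely the loss that is absorbed via the $(1+L)^\nu$-insertion coming from \eqref{e1.500} into the $\delta^{-\nu}$-factor of the final exponent. Summing the $j, \ell$ series as in Lemma 4.1 and applying Proposition 2.5 yields the weighted inequality with $A_2^2 = C\delta^{1 + 2/q + n(1 - 2/p_0) - 2\nu}$, giving \eqref{t2}.

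\textbf{Main obstacle.} The delicate step is verifying that, uniformly across $t \in (1, \delta^{-1/\kappa})$ and $j \ge j_0$, the discretization implicit in $(SC^{q,\kappa}_{p_0})$ costs exactly the factor $\delta^{-\nu}$ that \eqref{e1.500} is able to compensate, and no more. The critical regime is $t \asymp \delta^{-1/\kappa}$, where $N^\kappa \asymp \delta^{-1}$ meets the scale of the $\delta$-support of $\phi_{\delta,j}$; a careful re-examination of the bound \eqref{psidel} after the $(1+L)^\nu$-insertion, and of the resulting geometric $j, \ell$ sums, is the principal bookkeeping task.
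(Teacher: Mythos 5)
Your strategy departs structurally from the paper's proof, and in a way that does not close. The paper does not establish weighted $L^2$ inequalities for $T_\delta^{(1)}$ or $T_\delta^{(2)}$; the Calder\'on--Zygmund scheme of Lemma~\ref{le4.1} is used only for $T_\delta^{(3)}$ in Lemma~\ref{le5.5}, whose statement requires only ${\rm(SC^{q,\kappa}_{p_0})}$ and \emph{not} \eqref{e1.500}. For $T_\delta^{(1)}$ and $T_\delta^{(2)}$, where \eqref{e1.500} is essential, the paper proves the $L^{p_0'}$ bound directly --- decomposing only spectrally into bands $\eta_{\lambda'}(\sqrt L)$ of width $\approx \delta$, applying Minkowski to extract the $L^{p_0'}$ norm, then invoking ${\rm(SC^{q,\kappa}_{p_0})}$ and \eqref{e1.500} --- and interpolates with the $L^2$ estimate \eqref{einter}. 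Your attempt to run the weighted scheme on all three pieces has two genuine gaps.

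The first is structural. The weighted framework cannot absorb \eqref{e1.500}, which is an unweighted $L^{p_0'}\to L^2$ bound for $(1+L)^{-\gamma/2}$. In the paper's argument it enters only \emph{after} Minkowski has isolated the $L^{p_0'}$ norm, via $\|\Phi_{t,\delta}(\sqrt L)\eta_{\lambda'}(\sqrt L)\varphi_k(\sqrt L)f\|_{p_0'}\le C\|\eta_{\lambda'}(\sqrt L)\varphi_k(\sqrt L)(1+L)^{-\gamma/2}f\|_2$, so that $\lambda'$- and $k$-orthogonality can finish the job. In the weighted scheme the factor $(1+L)^{-\gamma/2}$ would sit inside $\int |(1+L)^{-\gamma/2}f|^2\,\mathfrak M_{r_0}w\,d\mu$, and \eqref{e1.500} gives no control of this; you would need a weighted or $L^p\to L^p$ surrogate which you do not supply.

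The second is quantitative. The claim that a compactly supported spectral cutoff $\tilde\chi(L)$ ``collapses every $j$- and $\ell$-sum'' for $T_\delta^{(1)}$ is not correct: the $\phi_{\delta,j}$ decomposition lives in the variable dual to $\cos(t\sqrt L)$, and bounded spectral support does not truncate the $j$-range (the relevant Fourier support of $\phi(\delta^{-1}(1-s^2/t^2))$ in $s$ has width $\sim(t\delta)^{-1}$). If one actually runs Lemma~\ref{le4.1}'s weighted argument for $T_\delta^{(1)}$ under ${\rm(SC^{q,\kappa}_{p_0})}$, the discrete norm $\|\cdot\|_{N^\kappa,q}$ at bounded $N$ is comparable to the sup norm, so the $\delta^{2/q}$ gain is lost, while the $j$-summation still produces $2^{j_0\,n(1/p_0-1/2)}\sim\delta^{-n(1/p_0-1/2)}$. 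The resulting constant is $\delta^{1-n(2/p_0-1)}$, which under \eqref{nontrivial2} is strictly worse than the required $\delta$. There is no way back to $A_1^2 = C\delta$ within the weighted framework alone; the paper's direct $L^{p_0'}$ route (exploiting only $\lambda'$-orthogonality with no $j$-decomposition) is what makes \eqref{t1} work.

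Finally, for \eqref{t2} you flag the delicate step but do not carry it out. The paper's crux is the explicit estimate \eqref{l-estimate}, proved by splitting $H=\Phi_{t,\delta}(1+\cdot^2)^{\gamma/2}$ into $(H-\xi_{N^{\kappa-1}}*H)+\xi_{N^{\kappa-1}}*H$, applying the mollifier smoothing bound \eqref{eq5.6} to the first piece and an averaging estimate to the second, and balancing $N^{-\beta\kappa+\gamma}\delta^{1/q-\beta}$ against $t^\gamma\delta^{1/q}$ with $\gamma=n(\kappa-1)(1/p_0-1/2)+\kappa\nu$. A ``careful re-examination of \eqref{psidel}'' cannot substitute: \eqref{psidel} is a continuous $L^q$ bound, and precisely how much the discrete $\|\cdot\|_{N^\kappa,q}$ norm exceeds it when $N^\kappa\approx t^\kappa<\delta^{-1}$ is exactly what the mollification argument quantifies.
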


\begin{lemma} \label{le5.3}
 Suppose the operator $L$ satisfies  the  property {\rm (FS)}
and the condition  ${\rm (SC^{q,\kappa}_{p_0})}$ for some $p_0$ such that  $1\leq p_0<2$, $2\leq q\leq \infty $  and some $\kappa\in {\mathbb N}^+$.
Then for all $2\leq p<p'_0 $ and $0<\delta\leq 1$,
 \begin{eqnarray*}
   \|T_{\delta}^{(3)}f\|_{p}
  &\leq& C(p)\delta^{{1\over 2}+{1\over q}+n({1\over 2}- {1\over p_0})}   \|f\|_{p}.
  \end{eqnarray*}
\end{lemma}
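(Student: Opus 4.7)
My approach is to parallel closely the proof of Lemma~\ref{le4.1} (the weighted square function estimate underlying Proposition~\ref{prop4.1}), with the Stein--Tomas restriction condition ${\rm (ST^{q}_{p_0, 2})}$ replaced by the Sogge-type spectral cluster condition ${\rm (SC^{q,\kappa}_{p_0})}$ throughout. The point of the restriction $t\ge \delta^{-1/\kappa}$ defining $T_\delta^{(3)}$ is that in this range one has $t^\kappa\delta\ge 1$, so the natural sampling scale $t^{-\kappa}$ built into ${\rm (SC^{q,\kappa}_{p_0})}$ is no coarser than the support-width $\sim t\delta$ of the multiplier $\phi(\delta^{-1}(1-s^2/t^2))$. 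Consequently the sampled $\ell^q$-norm at scale $t^\kappa$ behaves exactly like a genuine $L^q$ norm, and the spectral cluster estimate is as effective as a Stein--Tomas restriction estimate in this regime.

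The $L^2$ bound $\|T_\delta^{(3)}f\|_2\le C\delta^{1/2}\|f\|_2$ is immediate from the spectral theorem, exactly as in \eqref{einter}, so it is enough to handle $2<p<p_0'$. As in Lemma~\ref{le4.1}, I would reduce this to proving the weighted inequality
\begin{equation*}
\int_X|T_\delta^{(3)}f(x)|^2\, w(x)\,d\mu(x)\le C\delta^{1+2/q+n(1-2/p_0)}\int_X|f(x)|^2\,\mathfrak M_{r_0}w(x)\,d\mu(x)
\end{equation*}
for any $w\ge0$, where $1/r_0=2/p_0-1$. Duality of $w\in L^r$ with $1/r+2/p=1$ (so $r_0<r$) together with boundedness of $\mathfrak M_{r_0}$ on $L^r$ then gives the desired $L^p$ bound.

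To establish this weighted estimate I would follow the four-step scheme of Lemma~\ref{le4.1}: (i) insert the Littlewood--Paley decomposition \eqref{e44} and restrict $t\in[2^{k-1},2^{k+2}]$ with $2^k\gtrsim\delta^{-1/\kappa}$, so that for each $k$ only a bounded number of $\varphi_k(\sqrt L)f$ contribute on the support of $\phi(\delta^{-1}(1-L/t^2))$; (ii) expand $\phi_\delta(s)=\phi(\delta^{-1}(1-s^2))$ on the Fourier side as $\sum_{j\ge j_0}\phi_{\delta,j}(s)$ via the functions $\zeta_j$ from \eqref{zeta}, so that by Lemma~\ref{le2.2} and ${\rm(FS)}$ the kernel of $\phi_{\delta,j}(\sqrt L/t)$ is supported in $\D_{2^{j+1}/t}$; (iii) cover $X$ by a Whitney-type family $\{B_m\}$ of balls of radius $\sim 2^{j-k}$, and introduce the further decomposition $\phi_{\delta,j}=\sum_\ell \psi_{\ell,\delta}\phi_{\delta,j}$ with $\psi_{\ell,\delta}$ as in \eqref{psi}; (iv) apply ${\rm (SC^{q,\kappa}_{p_0})}$ with $N\sim t(1+2^{\ell+2}\delta)$ to obtain, in place of \eqref{e4.19}, the bound
\begin{equation*}
\bigl\|\bchi_{B_m}(\psi_{\ell,\delta}\phi_{\delta,j})(\sqrt L/t)\bigr\|_{2\to p_0'}\le C\,2^{jn(1/p_0-1/2)}\mu(B_m)^{1/2-1/p_0}\,\bigl\|(\psi_{\ell,\delta}\phi_{\delta,j})((1+2^{\ell+2}\delta)\,\cdot\,)\bigr\|_{t^\kappa,q}.
\end{equation*}
The arithmetic of reassembling the contributions --- summing over $\ell$, over Fourier scales $j\ge j_0$, and over dyadic $k\ge \frac{1}{\kappa}\log_2(1/\delta)$, and concluding with the square-function bound of Proposition~\ref{prop2.5} applied to $\mathfrak M_{r_0}w\in A_1$ --- is essentially identical to the ending of the proof of Lemma~\ref{le4.1}.

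The step I expect to be the main obstacle is the verification that the sampled norm $\|(\psi_{\ell,\delta}\phi_{\delta,j})((1+2^{\ell+2}\delta)\,\cdot\,)\|_{t^\kappa,q}$ obeys the same estimates as the ordinary $L^q$ norm in \eqref{psidel}. Unlike in the Stein--Tomas case, one cannot simply invoke the $L^q$ norm; instead one needs to count the number of sampling intervals of length $t^{-\kappa}$ that intersect the $2^\ell\delta$-neighborhood of $1$, which is of order $2^\ell\delta\cdot t^\kappa$. Exactly because $t^\kappa\delta\ge 1$ in the $T^{(3)}$ regime, this count is $\ge 2^\ell\ge 1$, and a direct inspection using the pointwise bound \eqref{e4.13} on $\phi_{\delta,j}$ together with the localization of $\psi_{\ell,\delta}$ yields the analogue of \eqref{psidel}. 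Once this sampled-norm estimate is in hand, the remaining summation follows the template of Lemma~\ref{le4.1} verbatim and delivers the factor $\delta^{1/2+1/q+n(1/2-1/p_0)}$.
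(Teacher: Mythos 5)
Your proposal follows essentially the same route as the paper's proof: it reduces to the weighted $L^2$ estimate with respect to $\mathfrak M_{r_0}w$, decomposes in Littlewood--Paley blocks and in the Fourier side via $\zeta_j$ and $\psi_{\ell,\delta}$, applies ${\rm (SC^{q,\kappa}_{p_0})}$ with $R\sim t(1+2^{\ell+2}\delta)$, and correctly identifies the sole new obstacle --- namely that the sampled norm $\|\cdot\|_{R^\kappa,q}$ must reproduce the $L^q$ bounds of \eqref{psidel}, which succeeds precisely because $t^\kappa\delta\gtrsim 1$ in the $T^{(3)}$ regime guarantees the support of $\psi_{\ell,\delta}\phi_{\delta,j}$ meets at least one full sampling interval. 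The only detail you gloss over is the tail term $I\!I(j,k)$ (the sum over $\ell>[-\log_2\delta]$), where the paper invokes the elementary observation $\|F\|_{N,2}\le\|F\|_\infty$ to pass from ${\rm (SC^{q,\kappa}_{p_0})}$ to ${\rm (ST^\infty_{p_0,2})}$ and then reuses \eqref{eee} verbatim; this is a minor omission that does not affect the soundness of your scheme.
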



\noindent{\bf 5.1. Proof of Lemma~\ref{le5.1}.} From  \eqref{einter},   the proof reduces to showing \eqref{t1} and \eqref{t2} for $p=p'_0$
by interpolation.
By \eqref{e44},  we have that for $f\in L^2(X)\cap L^p(X)$,
 \begin{eqnarray} \label{e5.4}
 | T^{(1)}_{\delta}f(x)|^2
 &\leq&
  5\sum_{k\leq 0}  \int_{2^{k-1}}^{2^{k+2}}  \Big|\phi\left(\delta^{-1}\left(1-{ {L}\over t^2} \right) \right)\varphi_k(\sqrt{L}\,)f(x)\Big|^2
  {dt\over t}.
  \end{eqnarray}
Write
$$
\Phi_{t,\delta}(\sqrt{L}\,):=\phi\left(\delta^{-1}\left(1-{ {L}\over t^2} \right)\right).
$$
Similarly as in Section \ref{sec4} for  $k\in{\mathbb Z}$ and  $\lambda=0, 1, \cdots, \lambda_0=[{8/\delta}] +1$
let $I_\lambda$ and  $\eta_\lambda$ be defined  by \eqref{ijk1} and \eqref{ijk2}, respectively.
Observe that  for every $t\in I_\lambda$,
if $
\Phi_{t,\delta}(s)\eta_{\lambda'} (s)\not=0$ , then  $\lambda-\lambda\delta-3\leq \lambda'\leq \lambda+\lambda\delta+3.$
Hence, we see that,  for every $t\in I_\lambda$,
 \begin{eqnarray}
 \label{k-de}
 \Phi_{t,\delta}(\sqrt{L}\,)
  \varphi_k(\sqrt{L}\,)
 &=&   \sum_{\lambda'=\lambda-10}^{\lambda+10}
 \Phi_{t,\delta}(\sqrt{L}\,)
  \varphi_k(\sqrt{L}\,)
 \eta_{\lambda'}  (\sqrt{L}\,),
  \end{eqnarray}
  and thus
   \begin{eqnarray}
   \label{k-int}
 \int_{2^{k-1}}^{2^{k+2}}
   \Big|\Phi_{t,\delta}(\sqrt{L}\,)
  \varphi_k(\sqrt{L}\,)f\Big|^2    {dt\over t}
  &=& \sum_\lambda\int_{I_\lambda} \Big|\sum_{\lambda'=\lambda-10}^{\lambda+10}
 \Phi_{t,\delta}(\sqrt{L}\,)
  \varphi_k(\sqrt{L}\,)
 \eta_{\lambda'}  (\sqrt{L}\,)f\Big|^2   {dt\over t}\\
 &\leq& C\sum_\lambda\sum_{\lambda'=\lambda-10}^{\lambda+10}\int_{I_\lambda} \Big|
 \Phi_{t,\delta}(\sqrt{L}\,)
  \varphi_k(\sqrt{L}\,)
 \eta_{\lambda'}  (\sqrt{L}\,)f\Big|^2   {dt\over t}.
 \nonumber
  \end{eqnarray}
  %
  By Minkowski's inequality,
  \begin{eqnarray*}
  \|T_\delta^{(1)}f\|_{{p_0'}}&\leq&  C\left\|\left(\sum_{k\leq 0}\int_{2^{k-1}}^{2^{k+2}}
   \Big|\Phi_{t,\delta}(\sqrt{L}\,)
  \varphi_k(\sqrt{L}\,)f\Big|^2    {dt\over t}\right)^{1/2}\right\|_{p_0'}\\
  &\leq& C\left\|\left(\sum_{k\leq 0}\sum_\lambda\sum_{\lambda'=\lambda-10}^{\lambda+10}\int_{I_\lambda}
   \Big|\Phi_{t,\delta}(\sqrt{L}\,)
  \eta_{\lambda'}  (\sqrt{L}\,)\varphi_k(\sqrt{L}\,)f\Big|^2    {dt\over t}\right)^{1/2}\right\|_{p_0'}\\
  &\leq& C\left(\sum_{k\leq 0}\sum_\lambda\sum_{\lambda'=\lambda-10}^{\lambda+10}\int_{I_\lambda}
  \left\|\Phi_{t,\delta}(\sqrt{L}\,)
  \eta_{\lambda'}  (\sqrt{L}\,)\varphi_k(\sqrt{L}\,)f\right\|_{p_0'}^2    {dt\over t}\right)^{1/2}.
  \end{eqnarray*}
 Note that $t\leq 1$ and by ${\rm (SC^{q,\kappa}_{p_0})}$
\begin{eqnarray*}
 \|\Phi_{t,\delta}(\sqrt{L}\,)(1+L)^{\gamma/2}\|_{2\to p_0'}&=& \|\Phi_{t,\delta}(\sqrt{L}\,)(1+L)^{\gamma/2}\|_{p_0\to 2}\\
  &\leq& C2^{n({1\over p_0}-{1\over 2})}\|\Phi_{t,\delta}(2\cdot)\|_{2^{\kappa},q}\leq C.
  \end{eqnarray*}
  Hence  $\big\|
 \Phi_{t,\delta}(\sqrt{L}\,)
  \varphi_k(\sqrt{L}\,)
 \eta_{\lambda'}  (\sqrt{L}\,)f\big\|_{p_0'} \le C \big\|  \eta_{\lambda'}  (\sqrt{L}\,)\varphi_k(\sqrt{L}\,)(1+L)^{-\gamma/2}f\big\|_{2} $.
 From this it is easy to see
   \begin{eqnarray*}
 \|T_\delta^{(1)}f\|_{{p_0'}}
  %
  &\leq&  C\left(\sum_{k\leq 0}\sum_\lambda\sum_{\lambda'=\lambda-10}^{\lambda+10}
  \left\|  \eta_{\lambda'}  (\sqrt{L}\,)\varphi_k(\sqrt{L}\,)(1+L)^{-\gamma/2}f\right\|_{2}^2   \int_{I_\lambda} {dt\over t}\right)^{1/2}\\
  &\leq&  C\left(\delta\sum_{k\leq 0}\sum_{\lambda'}
  \left\|  \eta_{\lambda'}  (\sqrt{L}\,)\varphi_k(\sqrt{L}\,)(1+L)^{-\gamma/2}f\right\|_{2}^2  \right)^{1/2}\\
  &\leq& C\delta^{1\over 2}\|(1+L)^{-\gamma/2}f\|_{2}\\
  &\leq& C\delta^{1\over 2}\|f\|_{{p_0'}},
  \end{eqnarray*}
where for  the last inequality we use \eqref{e1.500}.  Thus we get  \eqref{t1}.

We now show \eqref{t2} for $p=p'_0$.  By \eqref{e44},  we have that for $f\in L^2(X)\cap L^p(X)$,
 \begin{eqnarray} \label{e5.5}
 | T^{(2)}_{\delta}f(x)|^2
 &\leq &
  C\sum_{0<k\leq 1-{\rm log_2 \sqrt[\kappa]{\delta}}}  \int_{2^{k-1}}^{2^{k+2}}
  \Big|\phi\left(\delta^{-1}\left(1-{ {L}\over t^2} \right) \right)\varphi_k(\sqrt{L}\,)f(x)\Big|^2
  {dt\over t}.
  \end{eqnarray}
Again,  for  $k\in{\mathbb Z}$ and  $t\in [2^{k-1}, 2^{k+2}]$ and $\lambda=0, 1, \cdots, \lambda_0=[{8/\delta}] +1$,
we consider the interval $I_\lambda$ and  the function $\eta_\lambda$ which are given by \eqref{ijk1} and \eqref{ijk2}, respectively.
Observe that  for every $t\in I_\lambda$,
if $
\Phi_{t,\delta}(s)\eta_{\lambda'} (s)\not=0$ , then  $\lambda-\lambda\delta-3\leq \lambda'\leq \lambda+\lambda\delta+3.$
Hence, as before it follows that,  for every $t\in I_\lambda$, \eqref{k-de} holds and we have \eqref{k-int}. Putting this in \eqref{e5.5}
and  Minkowski's inequality (twice) give
  \begin{eqnarray*}
  \|T_\delta^{(2)}f\|_{{p_0'}}
  &\leq& C\left\|\left(\sum_{0<k\leq 1-{\rm log_2 \sqrt[\kappa]{\delta}}}\sum_\lambda\sum_{\lambda'=\lambda-10}^{\lambda+10}\int_{I_\lambda}
   \Big|\Phi_{t,\delta}(\sqrt{L}\,)
  \eta_{\lambda'}  (\sqrt{L}\,)\varphi_k(\sqrt{L}\,)f\Big|^2    {dt\over t}\right)^{1/2}\right\|_{p_0'}\\
  &\leq& C\left(\sum_{0<k\leq 1-{\rm log_2 \sqrt[\kappa]{\delta}}}\sum_\lambda\sum_{\lambda'=\lambda-10}^{\lambda+10}\int_{I_\lambda}
  \left\|\Phi_{t,\delta}(\sqrt{L}\,)
  \eta_{\lambda'}  (\sqrt{L}\,)\varphi_k(\sqrt{L}\,)f\right\|_{p_0'}^2    {dt\over t}\right)^{1/2}.
  \end{eqnarray*}
 We claim that
 \begin{equation}
 \label{l-estimate}
\|\Phi_{t,\delta}(\sqrt{L}\,)(1+L)^{\gamma/2}\|_{2\to p_0'}
\leq C\delta^{{1\over q}-n({1\over p_0}-{1\over 2})-\nu}.
\end{equation}
Assuming this for the moment,  we complete the proof.  From \eqref{e5.5} and \eqref{l-estimate} we have
   \begin{eqnarray*}
 \|T_\delta^{(2)}f\|_{{p_0'}}&\leq& C\delta^{{1\over q}-n({1\over p_0}-{1\over 2})-\nu}
 \left(\sum_{0<k\leq 1-{\rm log_2 \sqrt[\kappa]{\delta}}}\sum_\lambda\sum_{\lambda'=\lambda-10}^{\lambda+10}\int_{I_\lambda}
  \left\|  \eta_{\lambda'}  (\sqrt{L}\,)\varphi_k(\sqrt{L}\,)(1+L)^{-\gamma/2}f\right\|_{2}^2    {dt\over t}\right)^{1/2}.
   \end{eqnarray*}
   Thus, it is easy to see that
   \begin{eqnarray*}
 \|T_\delta^{(2)}f\|_{{p_0'}}
  &\leq&  C\delta^{{1\over q}-n({1\over p_0}-{1\over 2})-\nu}\left(\delta\sum_{0<k\leq 1-{\rm log_2 \sqrt[\kappa]{\delta}}}\sum_{\lambda'}
  \left\|  \eta_{\lambda'}  (\sqrt{L}\,)\varphi_k(\sqrt{L}\,)(1+L)^{-\gamma/2}f\right\|_{2}^2  \right)^{1/2}\\
  &\leq& C\delta^{{1\over q}-n({1\over p_0}-{1\over 2})-\nu}\delta^{1/2}\|(1+L)^{-\gamma/2}f\|_{2}\\
&\leq& C\delta^{{1\over 2}+{1\over q}-n({1\over p_0}-{1\over 2})-\nu}\|f\|_{{p_0'}}.
  \end{eqnarray*}
For the last inequality we use  \eqref{e1.500}. This gives the desired estimate.

It remains to show \eqref{l-estimate}.
Let $N=8[t]+1$. Note that $\supp \Phi_{t,\delta}\subset [-N,N]$.
From ${\rm (SC^{q,\kappa}_{p_0})}$
\begin{eqnarray*}
 \|\Phi_{t,\delta}(\sqrt{L}\,)(1+L)^{\gamma/2}\|_{2\to p_0'}&=& \|\Phi_{t,\delta}(\sqrt{L}\,)(1+L)^{\gamma/2})\|_{p_0\to 2}\\
  &\leq& CN^{n({1\over p_0}- {1\over 2})}\|\Phi_{t,\delta}(Nu)(1+N^2u^2)^{\gamma/2}\|_{N^\kappa,q}.
  \end{eqnarray*}
 We estimate $\|\Phi_{t,\delta}(Nu)(1+N^2u^2)^{\gamma/2}\|_{N^\kappa,q}$. Set $H(\lambda)=\Phi_{t,\delta}(\lambda)(1+\lambda^2)^{\gamma/2}$. Let $\xi\in C_c^\infty$ be an even function
 such that $\supp \xi\subset [-1,1], \hat{\xi}(0)=1$ and $\hat{\xi}^{(k)}(0)=0$ for all
$1\leq k\leq [\beta]+2$. Write $\xi_{N}=N\xi(Nu)$. Then
\begin{eqnarray*}
\|\Phi_{t,\delta}(Nu)(1+N^2u^2)^{\gamma/2}\|_{N^\kappa,q}\leq \|\big(H-
\xi_{N^{\kappa-1}}*H\big)(Nu)\|_{N^\kappa,q}
+
\|\big(\xi_{N^{\kappa-1}}*H\big)(Nu)\|_{N^\kappa,q}.
\end{eqnarray*}
To estimate the first in the right hand side, we make use of  the following (for its proof, see \cite[(3.29)]{CowS} or \cite[Propostion 4.6]{DOS}):
If  $\supp G\subset [-1,1]$, then
\begin{eqnarray}\label {eq5.6}
\|G-\xi_N\ast G\|_{N,q}\leq CN^{-\beta}\|G\|_{W^{\beta,q}}
\end{eqnarray}
for all $\beta>1/q$ and any $N\in \NN$.
Since  $\big(H-
\xi_{N^{\kappa-1}}*H\big)(Nu)=H(Nu)-\big(\xi_{N^\kappa}*(H(N\cdot))\big)(u)$.
For $\beta>n(1/2-1/p_0')$,  we get
\begin{align}
\label{first} \big\|\big(H-
\xi_{N^{\kappa-1}}*H\big)(Nu)\big\|_{N^\kappa,q}\leq CN^{-\beta\kappa}\|H(N\cdot)\|_{W^{\beta,q}}&=
CN^{-\beta\kappa}\Big\|\Phi_{t,\delta}(Nu)(1+N^2u^2)^{\gamma/2}\Big\|_{W^{\beta,q}}\\
&\leq CN^{-\beta\kappa+\gamma}\delta^{{1\over q}-\beta}.
\nonumber
\end{align}
For the second one, note that
\begin{eqnarray*}
\|(\xi_{N^{\kappa-1}}*H)(N\cdot)\|_{N^\kappa,q}
   &=&\left(\frac{1}{N^\kappa}\sum_{i=1-N^\kappa}^{N^\kappa}
\sup_{\lambda\in [\frac{i-1}{N^\kappa},\frac{i}{N^\kappa})}|(\xi*H(\cdot/N^{\kappa-1}))(N^\kappa\lambda)|^{q}\right)^{1/q}
\\
   &\leq&\left(\frac{1}{N^\kappa}\sum_{i=1-N^\kappa}^{N^\kappa}
\sup_{\lambda\in [i-1,i)}|(\xi*H(\cdot/N^{\kappa-1}))(\lambda)|^{q}\right)^{1/q}.
\end{eqnarray*}
Using
$
|\xi*h(\lambda)|^q\leq C\|\xi\|^q_{q'}\int_{\lambda-1}^{\lambda+1}|h(u)|^qdu,
$
\begin{eqnarray*}
\|(\xi_{N^{\kappa-1}}*H)(N\cdot)\|_{N^\kappa,q}
   &\leq&C\left(\frac{1}{N^\kappa}\sum_{i=1-N^\kappa}^{N^\kappa}
\sup_{\lambda\in [i-1,i)}\int_{\lambda-1}^{\lambda+1}|H(u/N^{\kappa-1})|^qdu\right)^{1/q}
\\
&\leq&C\left(\frac{1}{N^\kappa}\sum_{i=1-N^\kappa}^{N^\kappa}
\int_{i-2}^{i+1}|H(u/N^{\kappa-1})|^qdu\right)^{1/q}\\
&\leq&CN^{-{\kappa\over q}}\|H(\cdot/N^{\kappa-1})\|_q\\
&\leq& CN^{-{\kappa\over q}}(t^\kappa\delta)^{1\over q}t^{\gamma}\leq C\delta^{1\over q}t^{\gamma}.
\end{eqnarray*}
Combining \eqref{first} and the above, and  noting that $1\leq t\leq 1/\sqrt[\kappa]\delta$, we have
\begin{eqnarray*}
\|\Phi_{t,\delta}(\sqrt{L}\,)(1+L)^{\gamma/2}\|_{2\to p_0'}&\leq&
 CN^{n({1\over p_0} -{1\over 2})}(N^{-\beta\kappa+\gamma}\delta^{{1\over q}-\beta}+t^{\gamma}\delta^{1\over q})\\
&\leq& C\delta^{{1\over q}-n({1\over p_0}-{1\over 2})-\nu}.
\end{eqnarray*}
Here, we use  the relation $\gamma=n(\kappa-1)(1/p_0-1/2)+\kappa\nu$.  This gives \eqref{l-estimate}, and completes the proof of \eqref{t2}.
 \hfill{}$\Box$


\noindent{\bf 5.2. Proof of Lemma~\ref{le5.3}.}\ As in Proposition~\ref{prop4.1}, the proof of
  Lemma~\ref{le5.3} reduces to showing the following lemma.

\begin{lemma} \label{le5.5}
For any $0\leq w$ and $0<\delta\le 1$,
\begin{eqnarray*}
\int_X |T_\delta^{(3)} f(x)|^2 w(x) d\mu(x)  \leq C\delta^{1+{2\over q}+n({  2\over p'_0} -1)} \int_X |f(x)|^2 \mathfrak M_{r_0}w(x)d\mu(x),
  \end{eqnarray*}
where $1/r_0 +2/p'_0=1.$
\end{lemma}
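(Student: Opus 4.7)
The plan is to adapt the proof of Lemma~\ref{le4.1} to the setting where only the spectral cluster condition ${\rm (SC^{q,\kappa}_{p_0})}$ is available in place of the stronger restriction type condition ${\rm (ST^{q}_{p_0,2})}$. The truncation $t \geq 1/\sqrt[\kappa]{\delta}$ built into the definition of $T_\delta^{(3)}$ is precisely what permits this substitution: for such $t$, setting $N := [8t]+1$ we have $N^\kappa \gtrsim 1/\delta$, so the discretization scale $1/N^\kappa$ used in the norm $\|\cdot\|_{N^\kappa,q}$ is finer than the width $\delta$ on which our cutoffs vary. Note also that the target exponent $1+2/q+n(2/p_0'-1) = 1+2/q+n(1-2/p_0)$ is identical to the one obtained in Lemma~\ref{le4.1}.

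I would follow the same sequence of reductions as in Section~\ref{sec4}. Starting from the Littlewood--Paley expansion \eqref{e4.11} restricted to dyadic scales $k$ with $2^k \geq 1/\sqrt[\kappa]{\delta}$, decompose $\phi(\delta^{-1}(1-s^2)) = \sum_{j\geq j_0}\phi_{\delta,j}(s)$ via \eqref{e4.14}, so that by the finite speed property the kernel $K_{\phi_{\delta,j}(\sqrt{L}/t)}$ is supported in $\mathcal{D}_{2^{j+1}/t}$. Introduce the same maximal $\rho/10$-net $(x_m)$ at scale $\rho = 2^{j-k+2}$ and the associated sets $\widetilde{B}_m,B_m$. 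The finite overlap estimate \eqref{kk} combined with H\"older's inequality reduces the weighted pairing to the sum \eqref{e4.18} involving the further splitting $\phi_{\delta,j} = \sum_\ell \psi_{\ell,\delta}\phi_{\delta,j}$ and the partition $[2^{k-1},2^{k+2}] = \bigcup_\lambda I_\lambda$ with almost orthogonality provided by the localizers $\eta_{\lambda'}(\sqrt{L})$.

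The main step is to replace the use of ${\rm (ST^{q}_{p_0,2})}$ in \eqref{e4.19} by ${\rm (SC^{q,\kappa}_{p_0})}$. For $t \in [2^{k-1},2^{k+2}]$ and $N=[8t]+1$, the function $G(s):=(\psi_{\ell,\delta}\phi_{\delta,j})(s/t)$ is supported in $[-N,N]$, and the cluster condition yields
\begin{equation*}
\big\|\chi_{B_m}(\psi_{\ell,\delta}\phi_{\delta,j})(\sqrt{L}/t)\big\|_{2\to p_0'}
\leq C N^{n(1/p_0-1/2)}\mu(B_m)^{1/2-1/p_0}\big\|G(N\,\cdot\,)\big\|_{N^\kappa,q}.
\end{equation*}
Because $N^\kappa \geq 1/\delta$, the bins of width $1/N^\kappa$ in the discrete norm are narrower than the oscillation scale $\delta$ of $G(N\cdot)$, so a direct comparison to the $L^q$ norm (plus Schwartz tails away from $|s|\sim 1/N$) yields the same pointwise bounds on each summand as \eqref{psidel}. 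From this point on the argument mirrors that of Lemma~\ref{le4.1}: sum first in $\lambda$ using Bessel's inequality on the $\eta_{\lambda'}(\sqrt{L})$ piece, then geometrically in $\ell$, then in $j$, and finally invoke Proposition~\ref{prop2.5} to absorb $\sum_k|\varphi_k(\sqrt{L})f|^2$ into $|f|^2$ against the $A_1$ weight $\mathfrak{M}_{r_0}w$.

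The principal technical obstacle is the verification that the discrete norm $\|G(N\cdot)\|_{N^\kappa,q}$ really satisfies the bounds \eqref{psidel}. This requires a case analysis on $\ell$: in the range $0\leq \ell\leq -\log_2\delta$, where $G(N\cdot)$ has support width $\sim 2^\ell \delta/N$, one uses that $1/N^\kappa \leq \delta \leq 2^\ell\delta$ so that the discrete sum is comparable to the $L^q$-norm of the appropriate rescaling; in the Schwartz-tail regime $|s/t|\not\sim 1$, one uses the rapid decay estimate \eqref{e4.13}. Once this comparison is established, the remaining summation of geometric series in $j$ and $\ell$ produces exactly the exponent $\delta^{1+2/q+n(2/p_0'-1)}$ asserted in the lemma.
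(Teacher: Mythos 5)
Your proposal is correct and follows essentially the same route as the paper: both exploit the lower bound $t \geq \delta^{-1/\kappa}$ to guarantee $N^\kappa \gtrsim 1/\delta$, so that the discretized norm $\|\cdot\|_{N^\kappa,q}$ applied to $(\psi_{\ell,\delta}\phi_{\delta,j})$ reproduces the same $(2^\ell\delta)^{1/q}$ factor that ${\rm (ST^q_{p_0,2})}$ delivered in Lemma~\ref{le4.1}, with the remaining machinery (Littlewood--Paley, $\phi_{\delta,j}$ and $\psi_{\ell,\delta}$ splitting, the net $(x_m)$, the $\eta_{\lambda'}$ almost-orthogonality, and Proposition~\ref{prop2.5}) carried over unchanged. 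One small slip: the support of $G(N\cdot)$ has width $\sim 2^\ell t\delta/N \sim 2^\ell\delta$ rather than $2^\ell\delta/N$, but this does not affect the conclusion since the comparison only needs $1/N^\kappa \lesssim 2^\ell\delta$, which holds.
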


\begin{proof}
We prove  Lemma~\ref{le5.5}  by modifying  that of Lemma~\ref{le4.1}.
By \eqref{e44},  we have that, for  $f\in L^2(X)\cap L^p(X)$,
 \begin{eqnarray} \label{e5.8}
 | T_{\delta}^{(3)}f(x)|^2
 &\leq &
  C\sum_{k>1-{\rm log_2 \sqrt[\kappa]{\delta}}} \int_{2^{k-1}}^{2^{k+2}}
  \Big|\phi\left(\delta^{-1}\left(1-{ {L}\over t^2} \right)\right)\varphi_k(\sqrt{L}\,)f(x)\Big|^2
  {dt\over t}.
  \end{eqnarray}

 For  given  $0<\delta\leq 1$,  we let
$\delta\in [2^{-j_0-1}, 2^{-j_0})$ for some $j_0\in{\mathbb Z}$.  As in
the proof Lemma \ref{le4.1} we fix  a cutoff function  $\eta\in C_0^{\infty}$, identically one on
$\{|s| \leq 1 \}$  and supported on $\{|s| \leq 2 \}$.  For $j\ge j_0$ we define  $\zeta_j$ by \eqref{zeta} so that \eqref{id} holds. Then let $
  \phi_{\delta,j}$ be defined by \eqref{e4.12}  so that  \eqref{e4.14} holds.
From \eqref{e5.8} and \eqref{e4.14},  it follows  that for every function $w\geq 0,$
 \begin{eqnarray}\label{e5.11}\hspace{0.8cm}
  \int_X | T_{\delta}^{(3)}f(x)|^2 w(x) d\mu(x)
  &\leq&
  C\sum_{k>1-{\rm log_2 \sqrt[\kappa]{\delta}}}\left[ \sum_{j\geq j_0}   \left(\int_{2^{k-1}}^{2^{k+2}}
  \left\langle  \Big|\phi_{\delta,j}\left({\sqrt{L}\over t} \right)
  \varphi_k(\sqrt{L}\,)f\Big|^2, \  w\right\rangle   {dt\over t} \right)^{1/2}\right]^{2}.
 \end{eqnarray}
For $\ell\ge 0$ let ${ \psi}_{\ell, \delta}$ be defined by \eqref{psi}. So,  $1=\sum_{\ell=0}^{\infty} { \psi}_{\ell, \delta}(s)$, and so
 $
  \phi_{\delta,j}(s)=  \sum_{\ell=0}^{\infty}  \big ( { \psi}_{\ell, \delta}\phi_{\delta,j}\big)(s)
  $ for all $s>0.
 $
Similarly as in \eqref{e4.18}   we get
   \begin{eqnarray}\label{e5.12}\hspace{0.5cm}
 &&\hspace{-1.2cm} \left(\int_{2^{k-1}}^{2^{k+2}}
  \left\langle  \Big| \phi_{\delta,j}\left({\sqrt{L}\over t} \right)
  \varphi_k(\sqrt{L}\,)  f\Big|^2, \  w\right\rangle   {dt\over t} \right)^{1/2}\nonumber\\
 &\leq & \sum_{\ell=0}^{ [-{\rm log_2{\delta}}]}
  \left(   \sum_{m }   \|\bchi_{B_m}w\|_{{r_0}} \int_{2^{k-1}}^{2^{k+2}}  \left\|\bchi_{B_m}
 \left ( { \psi}_{\ell, \delta}\phi_{\delta,j}\right)\left({\sqrt{L}\over t}\right)
 \bchi_{\widetilde{B}_m} \varphi_k(\sqrt{L}\,)  f \right\|^2_{{p'_0}} {dt\over t}\right)^{1/2}
\nonumber\\
 &+&
  \sum_{\ell= [-{\rm log_2{\delta}}]+1}^{\infty}
 \left(  \sum_{m }   \|\bchi_{B_m} w\|_{{r_0}}  \int_{2^{k-1}}^{2^{k+2}}  \left\|\bchi_{B_m}
 \left ( { \psi}_{\ell, \delta}\phi_{\delta,j}\right)\left({\sqrt{L}\over t}\right)
 \bchi_{\widetilde{B}_m} \varphi_k(\sqrt{L}\,)  f \right\|^2_{{p'_0}} {dt\over t}\right)^{1/2}\nonumber\\
   &=&   I(j,k) + I\!I(j,k).
  \end{eqnarray}
 As in Section \ref{sec4}, the first term
$I(j,k)$ is the major one. We handle $I\!I(j,k)$ first.

  {\bf Estimates for $I\!I(j,k)$}.
Note that $\|F\|_{N,2} \le \|F\|_{N,\infty}=\|F\|_{\infty}$,
so  for a fixed $b>0$ the condition  ${\rm (SC^{q, \kappa}_{p_0})}$ implies $ {\rm (ST^{\infty}_{p_0, 2})}$
for all functions $F$ with $\supp F \subset (b,R)$. Hence we can repeat the same argument used for  the proof of \eqref{eee}
to show that for any $N<\infty$
 $$
 I\!I(j,k)
 \leq  C  \delta   2^{j[n ({1\over p_0}-{1\over 2})-N+1]} \left(\int_X |\varphi_k (\sqrt{L}\,)f(x)|^2  \mathfrak M_{r_0} w(x)d\mu(x)\right)^{1/2}.
 $$
{\bf Estimates for $I(j,k)$}.
As before (see Section \ref{sec4} ), for  $k\in{\mathbb Z}$ and  $t\in [2^{k-1}, 2^{k+2}]$ and $\lambda=0, 1, \cdots, \lambda_0=[{8/\delta}] +1$,
we consider the interval $I_\lambda$ and  the function $\eta_\lambda$ which are given by \eqref{ijk1} and \eqref{ijk2}, respectively.
For $t\in I_\lambda$,
$\lambda-2^{\ell+6}\leq \lambda'\leq \lambda+2^{\ell+6}$ if $
{\psi}_{\ell, \delta} \left({s/t}\right)\eta_{\lambda'} (s)\not=0$.
Thus,  for  $t\in I_\lambda$,  we have \eqref{eqn4}. Using this we get
 \begin{eqnarray*}
 I(j,k)\leq  \sum_{\ell=0}^{ [-{\rm log_2{\delta}}]} \left[ \sum_{m }  \|\bchi_{B_m} w\|_{{r_0}}
 \sum_{\lambda}  \int_{I_\lambda}  \left(\sum_{\lambda'=\lambda-2^{\ell+6}}^{\lambda+2^{\ell+6}}  \left\|\bchi_{B_m}
 \left ({ \psi}_{\ell, \delta}\phi_{\delta,j}\right)\left({\sqrt{L}\over t}\right)
 \eta_{\lambda'}  (\sqrt{L}\,)\big[\bchi_{\widetilde{B}_m} \varphi_k(\sqrt{L}\,)  f\big] \right\|_{{p'_0}}\right)^2  {dt\over t}\right]^{1/ 2}.
  \end{eqnarray*}

Note  that  $ \supp { \psi}_{\ell, \delta} \subseteq (1-2^{\ell+2}\delta, 1+2^{\ell+2}\delta).$
 Moreover,  if $\ell\geq 1$,   then  ${ \psi}_{\ell, \delta}(s)=0$  for $s\in (1-2^{\ell}\delta, 1+2^{\ell}\delta)$,
and so $\supp \left (\psi_{\ell, \delta}\phi_{\delta,j}\right)\left({\cdot/t}\right)\subset [t(1-2^{\ell+2}\delta), t(1+2^{\ell+2}\delta)]$.
Let $R=[t(1+2^{\ell+2}\delta)]+1$.
By the  condition   ${\rm (SC^{q,\kappa}_{p_0})}$, we have that,  for $0\leq \ell\leq [-{\rm log_2{\delta}}]$,
 \begin{eqnarray} \label{e5.13} \hspace{1cm}
 \left\|\bchi_{B_m}
 \left (\psi_{\ell, \delta}\phi_{\delta,j}\right)\left({\sqrt{L}\over t}\right)\right\|_{2\to {p'_0} }&=&
\left\| \left ({  \psi}_{\ell, \delta}\phi_{\delta,j}\right) \left({\sqrt{L}\over t}\right) \bchi_{B_m}   \right\|_{p_0\to 2}\nonumber\\
   &\leq& C
   \left(2^j(1+2^{\ell+2}\delta)\right)^{n ({1\over p_0}-{1\over 2})}\mu(B_m)^{{1\over 2}-{1\over p_0}}
   \|({  \psi}_{\ell, \delta}\phi_{\delta,j})\big(R \cdot/t\big)\|_{R^\kappa,q}\nonumber\\
   &\leq& C
 2^{jn ({1\over p_0}-{1\over 2})}\mu(B_m)^{{1\over 2}-{1\over p_0}}
   \|({  \psi}_{\ell, \delta}\phi_{\delta,j})\big(R \cdot/t\big)\|_{R^\kappa,q}.
         \end{eqnarray}


We note that
$$
\supp \,({  \psi}_{\ell, \delta}\phi_{\delta,j})\big(R \cdot/t\big)\subset \left[\frac{t(1-2^{\ell+2}\delta)}{R},\, \frac{t(1+2^{\ell+2}\delta)}{R}\right].
$$
This, in combination with  the fact that $R^\kappa\delta\geq 1$,  gives
\begin{eqnarray}
\label{e5.14}
 \|({  \psi}_{\ell, \delta}\phi_{\delta,j})\big((1+2^{\ell+2}\delta) \cdot\big)\|_{R^\kappa,q}
\leq  \|{  \psi}_{\ell, \delta}\phi_{\delta,j}\|_\infty \big\|\chi_{[\frac{t(1-2^{\ell+2}\delta)}{R},\, \frac{t(1+2^{\ell+2}\delta)}{R}]}\big\|_{R^\kappa,q}
\le C \|{  \psi}_{\ell, \delta}\phi_{\delta,j}\|_\infty  \left(\frac{2^{\ell+3}t\delta}{R}\right)^{1/q}.\nonumber
 \end{eqnarray}
 From this and \eqref{psidel} with $q=\infty$ we see that
  \begin{eqnarray*}
 \|({  \psi}_{\ell, \delta}\phi_{\delta,j})\big((1+2^{\ell+2}\delta) \cdot\big)\|_{R^\kappa,q}
&\leq  &C_N 2^{(j_0-j)N} 2^{- \ell N } (2^\ell\delta)^{1\over q}.
\end{eqnarray*}
Thus \eqref{e5.13} and the above inequality yield
	\begin{eqnarray} \label{e5.15}
 &&\hspace{-1.8cm}   \left\|\bchi_{B_m}
 \left ({ \psi}_{\ell, \delta}\phi_{\delta,j}\right)\left({\sqrt{L}\over t}\right)
 \eta_{\lambda'}  (\sqrt{L}\,)\big[\bchi_{\widetilde{B}_m} \varphi_k(\sqrt{L}\,)  f\big] \right\|_{{p'_0}}\\
 &\leq&
 C\delta^{1\over q}  2^{(j_0-j)N}
   2^{jn ({1\over p_0}-{1\over 2})} 2^{- \ell( N-{1\over q})} \mu(B_m)^{{1\over 2}-{1\over p_0}}   \big\|
 \eta_{\lambda'}  (\sqrt{L}\,)\big[\bchi_{\widetilde{B}_m} \varphi_k(\sqrt{L}\,)  f\big] \big\|_{2}. \nonumber
  \end{eqnarray}
Once \eqref{e5.15} is obtained,  we may repeat  the lines of argument in the proof of Lemma~\ref{le4.1} to get
 \begin{eqnarray*}
 I(j,k)
 &\leq &  C\delta^{{1\over q}+{1\over 2}}  2^{(j_0-j)N}
   2^{jn ({1\over p_0}-{1\over 2})}    \int_X |\varphi_k (\sqrt{L}\,)f(x)|^2  \mathfrak M_{r_0} w(x)d\mu(x).
  \end{eqnarray*}

Finally, combining the estimates for  $I(j,k)$ and $ I\!I(j,k)$,  together with  \eqref{e5.11}  and \eqref{e5.12},
we  get
  \begin{eqnarray*}
 \int_X| T_{\delta}^{(3)}f(x)|^2 w(x)d\mu(x)
 &\leq&  C\delta^{1+{2\over q}+n(1-{2\over p_0})}  \int_{X}   | f|^2 \mathfrak M_{r_0} w(x)dx
  \end{eqnarray*}
   whenever $N> n({1/p_0}-{1/2 }) +1$.
  This completes proof of Lemma ~\ref{le5.5}.
 \end{proof}

 \medskip

\section{Applications }\label{sec6}
\setcounter{equation}{0}

 As applications of our theorems  we discuss several examples of important elliptic operators.
Our results,  Theorems~\ref{th1.1}and ~\ref{th1.2}
  have applications to  all the examples which are
  discussed in \cite{DOS} and \cite{COSY}. Those  include
elliptic operators on compact manifolds,   the harmonic oscillator,
 radial  Schr\"odinger operators with inverse square potentials
 and the Schr\"odinger operators  on asymptotically conic manifolds.


\noindent{\bf 6.1.  Laplace-Beltrami operator on compact manifolds.}\
 Let  $\Delta_g$ be  the Laplace-Beltrami operator on a compact smooth
 Riemannian manifold $(M,g)$ of dimension $n$.
 It was shown by Sogge that  the condition  ${\rm(S_{p})}$ holds with $L=-\Delta_g$
in the standard range of Stein-Tomas restriction theorem, that is to say, for $1\leq p\leq 2(n+1)/(n+3)$, see \cite{Sog1, Sog3}. Hence we can apply
Theorem~\ref{th1.2} and obtain the following.

 \begin{cor}\label{prop6.1}  Suppose that $\Delta_g$ is the Laplace-Beltrami operator on a compact smooth
 	Riemannian manifolds $(M,g)$ of dimension $n$.
Then   the operator $S_{\ast}^{\alpha}(-\Delta_g)$ is bounded on $L^p(M)$ whenever
\begin{eqnarray}\label{e6.1}
  p \ge {2(n+1)\over n-1}, \ \ \ {\rm and}\ \   \alpha>
  \max\left\{  n\Big|{1\over p}-{\frac 1 2}\Big|- {\frac 1 2}, \, 0 \right\}.
\end{eqnarray}

 \end{cor}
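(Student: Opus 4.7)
The plan is to derive Corollary~\ref{prop6.1} as a direct consequence of Theorem B (i.e., the $q=2$ case of Theorem~\ref{th1.2}) applied to $L=-\Delta_g$, together with Sogge's classical $(p,p')$ spectral cluster estimate. Since the spectrum of $-\Delta_g$ on a compact manifold is discrete and hence has nonempty point spectrum, the unweighted restriction condition $\mathrm{(R_{p_0})}$ fails here, so Theorem A is not applicable and Theorem B is the right tool.

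First I would verify the hypotheses of Theorem B. Compactness of $(M,g)$ gives $\mu(M)<\infty$, and on a compact $n$-dimensional Riemannian manifold the volume of geodesic balls satisfies $C^{-1}\min(r^n,1)\le V(x,r)\le C\min(r^n,1)$, i.e.\ \eqref{eq2a}. The finite speed of propagation $\mathrm{(FS)}$ for $\cos(t\sqrt{-\Delta_g})$ is the well-known Courant property of the wave equation on Riemannian manifolds. Sogge's spectral cluster theorem \cite{Sog1,Sog3} asserts
$$
\bigl\|E_{\sqrt{-\Delta_g}}[\lambda,\lambda+1)\bigr\|_{p_0\to p_0'}\le C(1+\lambda)^{n(1/p_0-1/p_0')-1}
$$
for every $1\le p_0\le 2(n+1)/(n+3)$, which is precisely $\mathrm{(S_{p_0})}$. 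By \cite[Proposition I.14]{COSY} cited in the introduction, $\mathrm{(S_{p_0})}$ is equivalent to $\mathrm{(SC^{1}_{p_0})}$, so the cluster hypothesis of Theorem B holds for any such $p_0$.

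Now fix $p\ge 2(n+1)/(n-1)$ and $\alpha>\beta(p):=n(1/2-1/p)-1/2$. Its dual exponent satisfies $p'=p/(p-1)\le 2(n+1)/(n+3)$, and by duality $\beta(p)=\beta(p')$. Since the map $p_0\mapsto \alpha(p_0):=\max\{n(1/p_0-1/2)-1/2,\,0\}$ is continuous and $\alpha(p_0)\to \beta(p')=\beta(p)<\alpha$ as $p_0\nearrow p'$, one can pick $p_0\in(1,p')$ with $p_0\le 2(n+1)/(n+3)$ such that $\alpha>\alpha(p_0)$. (Note $p_0<p'<2$ since $p\ge 2(n+1)/(n-1)>2$, and for $p_0\le 2(n+1)/(n+3)$ one actually has $\alpha(p_0)=n(1/p_0-1/2)-1/2>0$.) With this choice, $2\le p<p_0'$ and $\alpha>\alpha(p_0)$, so Theorem B yields boundedness of $S_*^{\alpha}(-\Delta_g)$ on $L^p(M)$, and the a.e.\ convergence statement follows as its standard consequence.

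There is no substantive obstacle here: the corollary is essentially a packaging of Sogge's cluster estimate inside the abstract framework of Theorem B. The only delicate point is the selection of $p_0$ strictly less than $p'$ so as to simultaneously meet $p<p_0'$ and $\alpha>\alpha(p_0)$, which is handled by the continuity of $\alpha(\cdot)$ and the duality identity $\beta(p)=\beta(p')$.
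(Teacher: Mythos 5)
Your argument is correct and coincides with the paper's (implicit) proof: verify the hypotheses of Theorem B using Sogge's cluster estimate ${\rm (S_{p_0})}$, valid for $p_0\le 2(n+1)/(n+3)$, and then for a given finite $p\ge 2(n+1)/(n-1)$ choose $p_0<p'$ close to $p'$ so that both $p<p_0'$ and $\alpha>\alpha(p_0)$. Two small points: the duality identity should read $\alpha(p')=\beta(p)$ rather than $\beta(p')=\beta(p)$ (indeed $\beta(p')=n(1/p-1/2)-1/2<0$ for $p>2$), though the limit $\alpha(p_0)\to\beta(p)$ as $p_0\nearrow p'$ is the statement you actually use and it is correct; and your selection $p_0\in(1,p')$ presupposes $p<\infty$, so the endpoint $p=\infty$ is not reached by this argument — a gap the paper itself also leaves unaddressed in its one-line invocation of Theorem~\ref{th1.2}.
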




 The  corollary can be extended to  the Laplace-Beltrami operator on a certain class of  compact manifolds with
 boundaries if one combines Theorem \ref{th1.2} and the results in Sogge \cite{Sog4}.
As far as we are aware,  Corollary \ref{prop6.1}, especially in view of its generality, has not appeared  in  any
 literature before. 
 However, we should  mention that in \cite{MSS} Mockenhaupt, Seeger and Sogge showed that  the sharp maximal
 Bochner-Riesz bounds for $p\ge 2$  holds when  $(M, g)$
 is a compact Riemannian manifold of dimension $2$ with  periodicity assumption for  the geodesic flow.


\noindent{\bf 6.2. Schr\"odinger operator on asymptotically conic manifolds.}\
Scattering manifolds  or  asymptotically conic manifolds
 are defined as  interiors of a compact manifold with boundary $M$, and the metric $g$
 is smooth on the interior $M^\circ$ and has the form
\[g=\frac{dx^2}{x^4}+\frac{h(x)}{x^2} \]
in a collar neighbourhood near $\partial M$, where $x$ is a smooth boundary defining function
for $M$ and $h(x)$ a smooth one-parameter family of metrics on $\partial M$; the function $r:=1/x$
near $x=0$ can be thought of as a radial coordinate near infinity and the metric there is
asymptotic to the exact metric cone $((0,\infty)_r \times \partial M, dr^2+r^2h(0))$.

%
%

The restriction estimate \eqref{e1.4} and Bochner-Reisz sumability results for a class of Laplace type operators on
on asymptotically conic manifolds were obtained in \cite{GHS}.
Our approach allows us to complement these results  with the following concerning the maximal Bochner-Riesz operator.

\begin{cor}\label{prop8.1} Let $(M,g)$ be an
asymptotically conic nontrapping
manifold of dimension $n \geq 3$, and let $x$ be a smooth boundary defining function of
$\partial M$. Let $L:= - \Delta_g+V$ be a Schr\"odinger operator
with $V\in x^3C^\infty(M)$ and assume that $L$ has no $L^2$-eigenvalues and that $0$ is not a resonance.
Then  the operator $S_{\ast}^{\alpha}(L)$ is bounded on $L^p(M)$ whenever
\begin{eqnarray*}
p \ge {2(n+1)\over n-1}, \ \ \ {\rm and}\ \   \alpha>
\max\left\{  n\left|{1\over p}-{\frac 1 2}\right|- {\frac 1 2}, \, 0 \right\}.
\end{eqnarray*}
\end{cor}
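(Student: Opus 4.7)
The plan is to verify the hypotheses of Theorem~A for $L=-\Delta_g+V$ with $p_0=2(n+1)/(n+3)$ and then to extend the resulting $L^p$-range by Stein's complex interpolation so as to reach $p\ge 2(n+1)/(n-1)$ with the sharp exponent. First I would check the structural hypotheses: the uniform volume bound $C^{-1}r^n\le V(x,r)\le Cr^n$ holds on $M$ since inside $M^\circ$ this is standard Riemannian geometry, while near infinity the conic model $dr^2+r^2 h(0)$ gives $V(x,r)\sim r^n$ uniformly. The finite speed of propagation property $(\mathrm{FS})$ for $L=-\Delta_g+V$ follows from its self-adjointness via the Davies--Gaffney estimate; in fact under our hypotheses $L$ satisfies the full Gaussian upper bound $(\mathrm{GE})$, which in particular implies $(\mathrm{FS})$.

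The main analytic input is the restriction estimate from \cite{GHS}: under the stated assumptions (no $L^2$-eigenvalues, $0$ not a resonance, $V\in x^3 C^\infty(M)$, nontrapping), one has
\[\|dE_{\sqrt L}(\lambda)\|_{p\to p'}\le C\lambda^{n(1/p-1/p')-1},\qquad \lambda>0,\]
for $1\le p\le 2(n+1)/(n+3)$. As recalled in the introduction this is equivalent to $(\mathrm{R}_{p_0})$ with $p_0=2(n+1)/(n+3)$. Theorem~A then yields the $L^p$ boundedness of $S_*^\alpha(L)$ for every $2\le p<p_0'=2(n+1)/(n-1)$ whenever $\alpha>\alpha(p_0)=(n-1)/(2(n+1))$.

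To cover the remaining range $p\ge 2(n+1)/(n-1)$ with the sharp condition $\alpha>n(1/2-1/p)-1/2$, I would linearize the maximal operator by fixing a measurable $x\mapsto R(x)$ and working with the holomorphic family $\alpha\mapsto S^\alpha_{R(x)}(L)$. A limiting argument in Theorem~A supplies the endpoint $L^{p_0'}$ bound at $\alpha=(n-1)/(2(n+1))+\varepsilon$; at the opposite endpoint $p=\infty$, a standard argument based on the Gaussian bound and the Stein--Tomas-type restriction estimate (in the spirit of Corollary~\ref{cor3.3} with $q=2$) yields the sharp pointwise control $S_*^\alpha(L)f\le C\mathfrak{M}_r f$ for $\alpha>(n-1)/2$ and any $r\in[2,\infty)$. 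Stein's complex interpolation applied to the analytic family then produces, at each intermediate $p\in[p_0',\infty]$, exactly the sharp exponent $\alpha>n(1/2-1/p)-1/2$: a direct computation with the two endpoint values $(n-1)/(2(n+1))$ at $p=p_0'$ and $(n-1)/2$ at $p=\infty$ shows linear interpolation gives $(n-1)/2 - n/p = n(1/2-1/p)-1/2$. Since the bound is uniform in the choice of $R(\cdot)$, taking the supremum over $R(\cdot)$ concludes.

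The hard part lies not in the present argument but in the input from \cite{GHS}: proving the restriction estimate on a nontrapping asymptotically conic manifold requires careful microlocal analysis of the spectral measure, using propagation of singularities in the scattering calculus at high energies and a delicate treatment of the low-energy regime exploiting the no-resonance hypothesis. Granting that, the remainder is a routine combination of Theorem~A with Stein's complex interpolation for the linearized maximal Bochner--Riesz operator.
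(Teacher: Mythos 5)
The paper's proof is far shorter than yours: it invokes Theorem~A directly, with no interpolation. The key observation you appear to have missed is that the restriction estimate of \cite{GHS} furnishes $(\mathrm{R}_{p_0})$ for the \emph{entire} Stein--Tomas range $1\le p_0\le 2(n+1)/(n+3)$, not merely at the endpoint $p_0=2(n+1)/(n+3)$. Given $p\ge 2(n+1)/(n-1)$ (so that $p'\le 2(n+1)/(n+3)$) and $\alpha>n(1/2-1/p)-1/2$, one chooses $p_0<p'$ so close to $p'$ that $n(1/p_0-1/2)-1/2<\alpha$; then $(\mathrm{R}_{p_0})$ holds, $p<p_0'$, and $\alpha>\alpha(p_0)$, so Theorem~A yields $L^p$ boundedness of $S^\alpha_*(L)$ immediately. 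No linearization, no analytic family, no interpolation.

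Beyond being unnecessarily complicated, your interpolation route has genuine soft spots. For the $L^\infty$ endpoint you invoke Corollary~\ref{cor3.3} (via Proposition~\ref{prop3.2}), which requires the Gaussian bound $(\mathrm{GE})$; but $(\mathrm{GE})$ is not among the corollary's hypotheses, and it is not established for $L=-\Delta_g+V$ with $V\in x^3C^\infty(M)$ possibly sign-changing. The framework here guarantees only the weaker $(\mathrm{FS})$, which is all Theorem~A needs, so your assertion that the hypotheses ``in fact'' imply $(\mathrm{GE})$ needs a proof. Moreover, Stein's complex interpolation for the linearized family $\alpha\mapsto S^{\alpha}_{R(\cdot)}(L)$ requires operator-norm bounds with admissible growth in $|\mathrm{Im}\,\alpha|$ at both endpoints; Theorem~A is a fixed-$\alpha$ statement and does not hand you those growth estimates without tracking constants through Sections~4 and~5. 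Neither issue is necessarily fatal, but both are extra burdens that evaporate once one lets $p_0$ vary in Theorem~A, which is exactly what the one-line proof in the paper does.
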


\begin{proof} Corollary~\ref{prop8.1} follows  from restriction estimates \eqref{e1.4} established in \cite[Theorem 1.2]{GHS}  and  Theorem~A.
\end{proof}

Corollary~\ref{prop8.1} includes  a class of operators which are 0-th order perturbations
of the Laplacian on nontrapping asymptotically conic manifolds.
In particular, our results cover the following settings:
  the Schr\"odinger operators, i.e. $-\Delta + V$ on $\RR^n$, where $V$ smooth and decaying sufficiently at infinity;
the Laplacian with respect to  metric perturbations of
the flat metric on $\RR^n$, again decaying sufficiently at infinity;
 and the Laplacian on asymptotically conic manifolds, see \cite{GHS}.


\noindent{\bf 6.3. The harmonic oscillator.}\ In this section we focus  on the Schr\"odinger operators such as the  harmonic oscillator
$-\Delta + |x|^2$ on $L^2(\RN)$
for $n\ge 2$. Bochner-Riesz  summability results for the harmonic oscillator were  studied and
sharp results were obtained  by Karadzhov \cite{Kar} and Thangavelu in \cite{Th3, Th4}). Here we establish the
corresponding result for the maximal Bochner-Riesz operator.
However, we  consider the  class
Schr\"odinger operators $
L= - \Delta + V(x)$
with a  positive potential $V$ which satisfies the following condition
\begin{equation}\label{eq111.01}
V(x) \sim |x|^2, \quad |\nabla V(x)| \sim |x|, \quad |\partial_x^2 V(x)| \lesssim 1.
\end{equation}
Clearly this class includes the harmonic oscillator.

 A restriction type result for this class of operators was established by Koch
 and Tataru in \cite[Theorem 4]{KoT}, which states  that,  for $\lambda \ge 0$  and $1\leq p\leq 2n/(n+2)$,
\begin{eqnarray*}
\|E_L[\lambda^2,\lambda^2+1)\|_{p\to 2} \leq C(1+\lambda)^{n({1\over p}-{1\over 2})-1}.
\end{eqnarray*}
 It is not difficult to show that the above condition is equivalent to
  condition  ${\rm (SC^{2,\kappa}_{p})}$ for $\kappa=2$ and $1\leq p\leq 2n/(n+2)$, see \cite{COSY}.

As a consequence of Theorem~\ref{th1.2} we establish boundedness of the associated maximal Bochner-Riesz operator.

  \begin{cor}\label{cor6.3} Let $L= - \Delta + V(x)$
with a  positive potential $V(x)$ satisfying \eqref{eq111.01}.
Then   the operator $S_{\ast}^{\alpha}(L)$ is bounded on $L^p(\RN)$ whenever
\begin{eqnarray}\label{e6.3}
 p \ge {2n\over n-2}\ \ \ {\rm and }\ \ \  \alpha>
  \max\left\{n\Big|{1\over p}-{\frac 1 2}\Big|- {\frac 1 2}, \, 0 \right\}.
\end{eqnarray}
 \end{cor}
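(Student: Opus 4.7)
The plan is to reduce the corollary to Theorem~\ref{th1.2} with $q=2$, $\kappa=2$, and $p_0$ chosen just below $p'$. Three inputs are needed: (a) the finite speed of propagation property for $L=-\Delta+V$, which is standard for a non-negative self-adjoint second-order elliptic operator with real, locally bounded potential; (b) the spectral cluster condition ${\rm (SC^{2,2}_{p_0})}$ for $1\le p_0\le 2n/(n+2)$, which follows by combining the Koch--Tataru restriction estimate recalled in the excerpt with the equivalence noted in \cite{COSY}; and (c) the a priori bound \eqref{e1.500} with $\kappa=2$ and every $\nu>0$, which is the one genuinely new ingredient.

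The verification of (c) is the technical heart. With $\kappa=2$ the target exponent becomes $\gamma=n(1/p_0-1/2)+2\nu$. I would use the subordination identity
\[
 (1+L)^{-\gamma/2}=\frac{1}{\Gamma(\gamma/2)}\int_0^\infty t^{\gamma/2-1}e^{-t}e^{-tL}\,dt,
\]
split the $t$-integral at $t=1$, and control each piece via
\[
 \|e^{-tL}\|_{p_0'\to 2}\le C\min\bigl\{t^{-(n/2)(1/p_0-1/2)},\,e^{-ct}\bigr\}.
\]
For $t\le 1$ this follows from the pointwise Gaussian domination $|e^{-tL}(x,y)|\le (4\pi t)^{-n/2}e^{-|x-y|^2/(4t)}$, valid by Feynman--Kac since $V\ge 0$. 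For $t\ge 1$ one exploits the confining character of $V$: in the harmonic oscillator case Mehler's formula directly gives $|e^{-tL}(x,y)|\le Ce^{-nt}e^{-c(|x|^2+|y|^2)}$, and for potentials in \eqref{eq111.01} the same qualitative exponential decay follows from Agmon-type weighted heat kernel estimates associated with the spectral gap of $L$. Substituting the two-regime bound into the subordination integral, the resulting integral converges precisely when $\gamma/2>(n/2)(1/p_0-1/2)$, i.e.\ when $\nu>0$, yielding \eqref{e1.500}.

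For the final parameter matching, let $p\ge 2n/(n-2)$ and $\alpha>\max\{n|1/p-1/2|-1/2,0\}$. The constraint $p<p_0'$ in Theorem~\ref{th1.2} forces $p_0<p'\le 2n/(n+2)$; I would pick $p_0$ just below $p'$ and $\nu>0$ small enough that
\[
 \nu+\max\{n(1/p_0-1/2)-1/2,\,0\}<\alpha.
\]
Such a choice exists because the left side is continuous in $(p_0,\nu)$ and tends to $\max\{n(1/2-1/p)-1/2,\,0\}$ as $(p_0,\nu)\to(p',0^+)$, so the strict inequality in the hypothesis on $\alpha$ leaves room. Theorem~\ref{th1.2} then gives the $L^p$ boundedness of $S_*^\alpha(L)$ together with the a.e.\ convergence. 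The chief obstacle is the large-$t$ decay of $\|e^{-tL}\|_{p_0'\to 2}$ outside the harmonic oscillator model, where one lacks an explicit kernel and must invoke Agmon-type weighted estimates tailored to the confining potential; beyond this, the deduction is a clean application of Theorem~\ref{th1.2}.
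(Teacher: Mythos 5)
Your reduction to Theorem~C (equivalently Theorem~\ref{th1.2} with $q=2$, $\kappa=2$) matches the paper exactly: finite speed of propagation, the Koch--Tataru cluster estimates in the form ${\rm (SC^{2,2}_{p_0})}$, and the remaining task of establishing the a priori bound \eqref{e1.500}. Your parameter bookkeeping at the end is also fine. The divergence from the paper, and the problem, is entirely in how you propose to prove \eqref{e1.500}.

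The paper proves $\|(1+L)^{-\gamma/2}\|_{2\to p'}\leq C$ by factorizing $(1+L)^{-\gamma/2}$ through repeated appearances of the multiplication operator $M=M_{\sqrt{1+V}}$ as in \eqref{ppp}, controlling $M^\alpha(1+L)^{-\alpha/2}$ on $L^2$ via L\"owner--Heinz, $M(1+L)^{-1/2}$ on $L^q$ as a first-order Riesz transform, and $M^{-\alpha}$ between Lebesgue spaces by H\"older. No heat kernel asymptotics beyond what is implicit in the Riesz transform bound are needed. Your subordination route is conceptually natural and, if the two-regime bound $\|e^{-tL}\|_{p_0'\to 2}\le C\min\{t^{-(n/2)(1/p_0-1/2)},e^{-ct}\}$ were available, the remaining computation would indeed give \eqref{e1.500} for every $\nu>0$.

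However, your justification of the short-time piece is wrong, and this is a genuine gap. Gaussian domination via Feynman--Kac only gives the comparison $e^{-tL}(x,y)\le (4\pi t)^{-n/2}e^{-|x-y|^2/(4t)}=e^{t\Delta}(x,y)$, and the free heat semigroup $e^{t\Delta}$ does \emph{not} map $L^{p_0'}\to L^2$ for any $p_0'>2$ (take $f(x)=(1+|x|)^{-n/p_0'-\varepsilon}\in L^{p_0'}\setminus L^2$: convolution with a Gaussian does not create the needed decay). So the inequality $\|e^{-tL}\|_{p_0'\to 2}\le Ct^{-(n/2)(1/p_0-1/2)}$ cannot be a consequence of the Gaussian upper bound alone. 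For the harmonic oscillator the estimate is true, but one must use the full Mehler kernel already for $t\le 1$: one computes $\|e^{-tL}\|_{\infty\to 2}\sim t^{-n/4}$ from $\int K_t(x,y)\,dy=(\cosh 2t)^{-n/2}e^{-\frac{1}{2}|x|^2\tanh 2t}$ and then interpolates with $\|e^{-tL}\|_{2\to 2}\le 1$; the $e^{-ct|x|^2}$-type confinement in the kernel is essential even at small times. For general potentials satisfying \eqref{eq111.01} you would need the analogous short-time confining heat kernel bound, which is not provided by Feynman--Kac and is not stated in the cited sources; you invoke Agmon-type weighted estimates only for $t\ge 1$, leaving the short-time regime unjustified. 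This is precisely the difficulty the paper's factorization argument is designed to circumvent: it replaces delicate short-time heat kernel asymptotics by a soft $L^2$ inequality (L\"owner--Heinz), a known Riesz-transform bound, and elementary H\"older estimates for negative powers of $\sqrt{1+V}$.
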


\begin{proof}

 As we just mentioned,  the condition   ${\rm (SC^{2,\kappa}_{p'})}$ for $\kappa=2$ and  $1\leq p'\leq \frac{2n}{n+2}$ follows from
 \cite[Thoerem 4]{KoT} and \cite[Theorem I\!II.9]{COSY}.  Hence by { Theorem~C} it is enough to show that
if $V(x)\sim |x|^2$ is a  positive potential  and     $L= - \Delta + V$, then
 \begin{eqnarray}\label{e77.111}
 \|(1+L)^{-\gamma/2}\|_{2 \to p'}\leq C, \,\, \gamma=n(1/p-1/2)+2\nu
 \end{eqnarray}
for   $1\leq p'\leq \frac{2n}{n+2}$ and  all $\nu>0$.
 The proof of \eqref{e77.111} for $p=1$ is given in  \cite[Lemma 7.9]{DOS}. We give a brief proof of this for completeness.

 Now fix $\nu$ as a positive number. To prove \eqref{e77.111}, we put  $M=M_{\sqrt{1+V}}$. Then we note that
 $$
 \|(1+L)^{1/2}f\|_2^2=\langle(1+L)f,f\rangle \geq \langle M^2f,f\rangle=\|Mf\|_2^2.
 $$
By the L\"owner-Heinz inequality for any quadratic forms $B_1$ and $B_2$, if $B_1\geq B_2\geq 0$, then $B_1^\alpha\geq B_2^\alpha$ for
 $0\leq \alpha\leq 1$. Hence, $$\langle(1+L)^\alpha f,f\rangle \geq \langle M^{2\alpha}f,f\rangle.$$ Thus, for $\alpha\in [0,1]$,
 \begin{eqnarray}\label{e6.10a}
 \|M^\alpha(1+L)^{-\alpha/2}\|_{2\to 2}\leq C .
 \end{eqnarray}
 For $\alpha=1$ the operator $M^\alpha(1+L)^{-\alpha/2}$ is of a first order Riesz transform type and a standard argument yields,
 for any $q\in (1,2]$,
\begin{eqnarray}\label{e6.10}
 \|M(1+L)^{-1/2}\|_{q\to q}\leq C,
\end{eqnarray}
 see \cite[Theorem 11]{S2}. Then  by H\"older's inequality,
 for any $q_1\geq q_2\geq 1$ with $s=(1/q_2-1/q_1)^{-1}$,
\begin{eqnarray}\label{e6.11}
 \|M^{-\alpha}\|_{q_1\to q_2}\leq C \left(\int_{{\mathbb R}^n} (1+V(x))^{-s\alpha/2}dx\right)^{1/(s\alpha)}.
\end{eqnarray}
Recall that $\gamma=n(1/p-1/2)+2\nu$.  Write
 \begin{eqnarray}\label{ppp}
(1+L)^{-\gamma/2}=\big(M^{-1}M(1+L)^{-1/2}\big)^{[\gamma]} M^{[\gamma]-\gamma} M^{\gamma-[\gamma]}(1+L)^{([\gamma]-\gamma)/2}.
\end{eqnarray}
Because of $V(x)\sim |x|^2$, choose
$s=(n+\varepsilon)/\alpha$ in \eqref{e6.11} with
 $\varepsilon= 2\nu/({1/p'}-{1/2})>0. $
Denote $p_0 $ by
 ${1/p_0}=
{(\gamma-[\gamma])/(n+\varepsilon)} +{1/2}
 $
and for each $1\leq i\leq [\gamma]-1$ we define $p_i$ by putting $1/p_{i+1}- 1/p_i=1/(n+\varepsilon)$, so $p_{[\gamma]}=p'$.
Now multiple composition of operators from  \eqref{e6.10a}, \eqref{e6.10}  and \eqref{e6.11}, in combination with \eqref{ppp},  yield
\begin{eqnarray*}
 \|(1+L)^{-\gamma/2}\|_{2 \to p'}
 \leq \|M^{\gamma-[\gamma]}(1+L)^{([\gamma]-\gamma)/2}\|_{2\to 2} \|M^{[\gamma]-\gamma}\|_{2\to p_0}
\prod_{i=0}^{[\gamma]-1}\|M^{-1}M(1+L)^{-1/2}\|_{p_i\to p_{i+1}}
 \leq  C.
\end{eqnarray*}
This finishes the proof of \eqref{e77.111}, and completes  the proof of Corollary~\ref{cor6.3}.
 \end{proof}


\noindent{\bf 6.4. Operators $\Delta_n+  {c\over r^{2}}$ acting on $L^2((0,\infty), r^{n-1}dr)$.}\
In this section  we consider a class of the Schr\"odinger operators
on $L^2((0,\infty), r^{n-1} dr)$. These operators
generate semigroups but do not  have the classical Gaussian upper bound for the  heat kernel.

Fix  $n > 2$ and   $ c>  -{(n-2)^2/4} $ and consider the space $L^2((0,\infty), r^{n-1}dr)$.
For  $f,g \in C_c^\infty(0,\infty)$
we define the quadratic form
\begin{equation}
Q_{n,c}^{(0,\infty)}  (f,g)=\int^{\infty}_{0}f'(r)g'(r)r^{n-1} dr
+\int_0^\infty  \frac{c}{r^2} f(r)g(r) r^{n-1}dr.
\label{eq9.1}
\end{equation}
Using the Friedrichs extension one can define the operator  $L_{n,c}=
\Delta_n+c/{r^2}$ as the unique self-adjoint operator corresponding
to $ Q_{n,c}^{(0,\infty)} $, acting on $L^2((0,\infty), r^{n-1}dr)$. In the sequel we will write
$L$ instead of $L_{n,c}$, which is formally given by the following formula
$$
Lf=(\Delta_n+\frac{c}{r^2} ) f=-\frac{d^2}{dr^2}f-\frac{n-1}{r}\frac{d}{dr}f +\frac{c}{r^2}f.
$$
 The classical Hardy  inequality
\begin{equation}\label{e6.99}
- \Delta\geq  {(n-2)^2\over 4}|x|^{-2},
\end{equation}
shows that  for all $c > -{(n-2)^2/4}$,  the self-adjoint operator  $L$
is non-negative. Such operators can be seen as radial  Schr\"odinger operators
with inverse-square potentials.
 It follows by Theorem 3.3 of \cite{CouS} that
$L$ satisfies the property (FS).

Now for $   -{(n-2)^2/4}<c<0,$ we set $p_c^{\ast}=n/\sigma$  where $\sigma=(n-2)/2-\sqrt{(n-2)^2/4+c}$.   Note that $2<{2n\over n-2}<p_c^{\ast}$.
Liskevich, Sobol
and Vogt \cite{LSV}  proved that, for $t>0$ and
$p\in ((p_c^{\ast}){'}, p_c^{\ast})$,
$$
\|e^{-tL}\|_{p\to p}\leq C.
$$
They also proved that the range of $p$,  $((p_c^{\ast}){'}, p_c^{\ast})$ is optimal in  the sense that,
if $p\not\in ((p_c^{\ast}){'}, p_c^{\ast})$, the semigroup does not act on
$L^p((0,\infty), r^{n-1}dr)$
(see also  \cite{CouS}).

\begin{cor}\label{prop8.5} Suppose that $n > 2$  and $  -{(n-2)^2/4}<c $. Set
\begin{eqnarray*}
  p_c^{\ast}=
  \left\{
  \begin{array}{ll}
  {n\over \sigma}, \ \ \  c<0;\\[6pt]
  \infty, \ \ \ \ c\geq 0,
  \end{array}
  \right.
  \end{eqnarray*}
 where $\sigma=(n-2)/2-\sqrt{(n-2)^2/4+c}$.
Then   the operator $S_{\ast}^{\alpha}(L)$ is bounded on $L^p((0,\infty), r^{n-1}dr)$ whenever
\begin{eqnarray}\label{e6.9}
{2n\over n-1} <p<  p_c^{\ast} \ \ \ {\rm and }\ \ \  \alpha>
  \max\left\{  n\left|{1\over p}-{\frac 1 2}\right|- {\frac 1 2}, \, 0 \right\}.
\end{eqnarray}

 \end{cor}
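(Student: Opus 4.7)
The plan is to apply Theorem~\ref{th1.1} with $q=2$ to $L=L_{n,c}$ on $X=(0,\infty)$ equipped with $d(r,s)=|r-s|$ and $\wrt\mu=r^{n-1}\wrt r$. Direct estimation of $V(r_0,\rho)=\int_{\max(0,r_0-\rho)}^{r_0+\rho}s^{n-1}\wrt s$ shows that $\mu$ is doubling with homogeneous dimension $n$, and the finite speed of propagation property (FS) for $L_{n,c}$ has already been recorded in the text via Theorem~3.3 of~\cite{CouS}.

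The core task is to establish the restriction condition ${\rm (R_{p_0})}$, equivalently $\|\wrt E_{\SL}(\lambda)\|_{p_0\to p_0'}\le C\lambda^{n(1/p_0-1/p_0')-1}$, for every $p_0\in\bigl((p_c^{\ast})',\,2n/(n+1)\bigr)$. The natural route is to diagonalize $L$ by the Hankel transform adapted to the Sturm--Liouville expression $-\partial_r^2-\frac{n-1}{r}\partial_r+c/r^2$. Setting $\nu=\sqrt{(n-2)^2/4+c}$, one obtains the spectral density
\[
\wrt E_{\SL}(\lambda)f(r)=\lambda^{n-1}\phi_\lambda(r)\int_0^\infty\phi_\lambda(s)f(s)s^{n-1}\wrt s,\qquad \phi_\lambda(r)=c_\nu(r\lambda)^{-(n-2)/2}J_\nu(r\lambda),
\]
which is a rank-one operator for each fixed $\lambda$. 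H\"older's inequality immediately gives $\|\wrt E_{\SL}(\lambda)\|_{p_0\to p_0'}\le\lambda^{n-1}\|\phi_\lambda\|_{L^{p_0'}(r^{n-1}\wrt r)}^{2}$. Using the standard Bessel asymptotics $J_\nu(x)\sim x^\nu$ as $x\to 0$ and $J_\nu(x)=O(x^{-1/2})$ as $x\to\infty$, one verifies that $\|\phi_\lambda\|_{p_0'}$ is finite precisely for $p_0$ in $\bigl((p_c^{\ast})',2n/(n+1)\bigr)$: the upper endpoint is forced by integrability at infinity while the lower one is the Hardy threshold, matching the Liskevich--Sobol--Vogt range of~\cite{LSV}. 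A scaling argument ($r\mapsto s/\lambda$) then produces $\|\phi_\lambda\|_{p_0'}^2\asymp\lambda^{-2n/p_0'}$, so that $\lambda^{n-1}\|\phi_\lambda\|_{p_0'}^2\asymp\lambda^{n(1/p_0-1/p_0')-1}$, which is exactly ${\rm (R_{p_0})}$.

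Granting ${\rm (R_{p_0})}$, the corollary follows quickly: for any $p\in(2n/(n-1),p_c^{\ast})$ the dual $p'$ lies in $\bigl((p_c^{\ast})',2n/(n+1)\bigr)$, so one may choose $p_0$ in this open interval with $p_0<p'$, and Theorem~\ref{th1.1} then yields boundedness of $S_\ast^{\alpha}(L)$ on $L^p$ whenever $\alpha>n(1/p_0-1/2)-1/2$. Letting $p_0\uparrow p'$ recovers the sharp threshold $\alpha>n|1/p-1/2|-1/2$ appearing in~\eqref{e6.9}, and the almost everywhere convergence is the standard consequence of the $L^p$-boundedness of the maximal operator.

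The principal difficulty is the restriction estimate itself in the negative-$c$ regime, where the Gaussian heat kernel bound (GE) is not available, so one cannot lean on heat-kernel machinery and must work directly with the Bessel kernel $\phi_\lambda$. The delicate point is tracking how the admissible interval of $p_0$ shrinks as $c\downarrow -(n-2)^2/4$ (equivalently $\nu\downarrow 0$); in particular the lower endpoint $(p_c^{\ast})'$ must match the semigroup threshold exactly, with no slack, and the rank-one spectral density above is what makes this bookkeeping manageable.
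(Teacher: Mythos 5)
The proposal hinges on applying Theorem~\ref{th1.1}, which requires the localized condition ${\rm (ST^{q}_{p_0,2})}$, but you only establish the global restriction estimate ${\rm (R_{p_0})}$ (equivalently \eqref{e1.4}) via the Hankel-transform spectral density. The gap is that ${\rm (R_{p_0})}$ does not yield ${\rm (ST^{2}_{p_0,2})}$ here. The conversion (Lemma~\ref{le2.3}) requires the uniform volume growth \eqref{eq1.1}, i.e.\ $V(x,r)\sim r^n$; but for $\wrt\mu=r^{n-1}\wrt r$ on $(0,\infty)$, when $\rho\ll r_0$ one has $V(r_0,\rho)\sim r_0^{n-1}\rho$, which is far larger than $\rho^n$. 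Concretely, ${\rm (R_{p_0})}$ gives $\|F(\SL)\bchi_{B(x,r)}\|_{p_0\to 2}\le CR^{n(1/p_0-1/2)}\|\delta_R F\|_2$, whereas ${\rm (ST^{2}_{p_0,2})}$ demands the smaller bound $CV(x,r)^{1/2-1/p_0}(Rr)^{n(1/p_0-1/2)}\|\delta_R F\|_2$; comparing the two forces $V(x,r)\lesssim r^n$, which fails. Thus you would need to prove the ball-localized estimate with the correct $V(x,r)^{1/2-1/p_0}$ weight, which is a genuinely harder statement than the global one your rank-one/H\"older computation supplies. The paper flags exactly this point in the remark following the corollary: one must use ${\rm (ST^{2}_{p,2})}$ because \eqref{rp} is not applicable in this setting, and the paper's own proof simply invokes \cite[Proposition~III.10]{COSY}, which establishes ${\rm (ST^{2}_{p,2})}$ for $\Delta_n+c/r^2$ directly by a more careful, ball-localized analysis of the Bessel kernel.

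Your reduction at the end (choosing $p_0<p'$ in the open interval and letting $p_0\uparrow p'$, then invoking a.e.\ convergence) is fine, and the scaling and Bessel-asymptotic bookkeeping identifying the endpoints $((p_c^{\ast})',2n/(n+1))$ is sound; the missing ingredient is the localized restriction-type estimate, not the global one.
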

\begin{proof}
It was shown in \cite[Proposition I\!I\!I.10]{COSY} that the
condition ${\rm (ST^{2}_{p, 2})}$ for the operators $\Delta_n+c/{r^2}$ holds
for
 $p\in \big((p_c^{\ast}){'}, {2n\over n+1}\big)$ for $c<0$;  for $p\in \big[1, {2n\over n+1}\big)$
 for $c\ge 0$. Now the corollary follows from Theorem \ref{th1.1}.
\end{proof}

\begin{rem}
In the proof of Corollary \ref{prop8.5} one has to use condition  ${\rm (ST^{2}_{p, 2})}$
because the condition \eqref{rp} is no longer valid in this setting.
\end{rem}

Finally we mention that
our approach can be also applied to a class of sub-Laplacians on Heisenberg $H$-type group considered in \cite{LW},
for the class of inverse square potentials considered
in \cite{BPST} and a class of Schr\"odinger type operators investigated in \cite{RS}.

\noindent
{\bf Acknowledgements:}
P. Chen was supported by NNSF of China 11501583, Guangdong Natural Science Foundation 2016A030313351 and the Fundamental Research Funds for the Central Universities 161gpy45. S. Lee  was partially supported by NRF (Republic of Korea) grant No. 2015R1A2A2A05000956.
A. Sikora was partly  supported by
Australian Research Council  Discovery Grant DP DP160100941.
 L.X. Yan was supported by the NNSF
of China, Grant No. ~11471338 and  ~11521101, and Guangdong Special Support Program.

\bigskip

\end{document}